\documentclass[12pt]{amsart}
\usepackage{url, 
	amssymb,setspace, mathrsfs,fontenc, comment}
\usepackage[alphabetic]{amsrefs}
\usepackage{fullpage} 
\usepackage{color}
\usepackage{tikz-cd}
\usepackage[all]{xy}
\usepackage{amsmath}

\usepackage{url, 
	amssymb,setspace, mathrsfs,fontenc}
\usepackage{amsrefs}
\usepackage{graphicx}
\usepackage[mathcal]{euscript}
\usepackage{verbatim}
\usepackage{hyperref}
\hypersetup{backref=true}
\usepackage{mathtools}
\usepackage{tikz}
\usetikzlibrary{chains}

\tikzset{node distance=2em, ch/.style={circle,draw,on chain,inner sep=2pt},chj/.style={ch,join},every path/.style={shorten >=4pt,shorten <=4pt},line width=1pt,baseline=-1ex}

\newtheorem{thm}{Theorem}
\newtheorem{lem}[thm]{Lemma}
\newtheorem{prop}[thm]{Proposition}
\newtheorem{conj}[thm]{Conjecture}
\newtheorem{cor}[thm]{Corollary}

\theoremstyle{remark}
\newtheorem{rem}[thm]{Remark}
\newtheorem{exam}[thm]{Example}

\DefineSimpleKey{bib}{myurl}
\newcommand\myurl[1]{\url{#1}}
\BibSpec{webpage}{
	+{}{\PrintAuthors} {author}
	+{,}{ \textit} {title}
	+{}{ \parenthesize} {date}
	+{,}{ \myurl} {myurl}
}
\usepackage{arydshln}

\newcommand{\nc}{\newcommand}

\nc{\ssec}{\subsection}

\nc{\on}{\operatorname}

\nc{\sE}{\mathscr{E}}
\nc{\sF}{\mathscr{F}}
\nc{\sL}{\mathscr{L}}
\nc{\sD}{\mathscr{D}}
\nc{\sA}{\mathscr{A}}

\nc{\cC}{\mathcal{C}}
\nc{\cG}{\mathcal{G}}
\nc{\cV}{\mathcal{V}}
\nc{\CB}{\mathcal{B}}
\nc {\K}{\mathcal{K}}

\nc{\cE} {\mathcal{E}}
\nc{\Kl}{\mathrm{Kl}}
\nc{\cO}{\mathcal{O}}
\nc{\cF}{\mathcal{F}}
\nc{\cZ}{\mathcal{Z}}
\nc{\bcZ}{\overline{\mathcal{Z}}}
\nc{\bcB}{\overline{\mathcal{B}}}
\nc{\cD}{\mathcal{D}}
\nc{\cDt}{\mathcal{D}^\times}
\nc{\cH}{\mathcal{H}}
\nc{\bZ}{\mathbb{Z}}
\nc{\bH}{\mathbb{H}}
\nc{\bQ}{\mathbb{Q}}
\nc{\bR}{\mathbb{R}}
\nc{\bC}{\mathbb{C}}
\nc{\bQl}{\overline{\mathbb{Q}}_\ell}
\nc{\bQlt}{\bQl^\times} 
\nc{\FG}{\mathrm{FG}}
\nc{\dR}{\mathrm{dR}}
\nc{\dv}{\dot{v}}
\nc{\du}{\dot{u}}

\nc{\uG}{\underline{G}}
\nc{\uc}{\underline{c}}
\nc{\uu}{\underline{u}}
\nc{\cU}{\mathcal{U}}
\nc{\rat}{\mathrm{rat}}
\nc{\Hyp}{\mathrm{Hyp}}
\nc{\Lie}{\mathrm{Lie}}
\nc{\ctheta}{\check{\theta}}
\nc{\nil}{\mathrm{nil}}
\nc{\bLX}{\overline{LX}}
\nc{\bOmega}{\overline{\Omega}}
\nc{\tOmega}{\widetilde{\Omega}}

\nc{\fF}{\mathfrak{F}}
\nc{\fB}{\mathfrak{B}}
\nc{\fZ}{\mathfrak{Z}}
\nc{\fx}{\mathfrak{x}}
\nc{\fy}{\mathfrak{y}}
\nc{\fb}{\mathfrak{b}}
\nc{\fk}{\mathfrak{k}}
\nc{\fI}{\mathfrak{i}}
\nc{\fj}{\mathfrak{j}}
\nc{\fg}{\mathfrak{g}}
\nc{\fu}{\mathfrak{u}}
\nc{\fl}{\mathfrak{l}}
\nc{\fn}{\mathfrak{n}}
\nc{\cP}{\mathcal{P}}
\nc{\cQ}{\mathcal{Q}}
\nc{\ft}{\mathfrak{t}}
\nc{\fz}{\mathfrak{z}}
\nc{\fc}{\mathfrak{c}}
\nc{\cfc}{\check{\mathfrak{c}}}
\nc{\fh}{\mathfrak{h}}
\nc{\fp}{\mathfrak{p}}
\nc{\cfp}{\mathring{\mathfrak{p}}}
\nc{\bone}{\mathbf{1}}
\nc{\tg}{\mathtt{g}}
\nc{\hfg}{\widehat{\fg}}
\nc{\ch}{\check{\fh}}
\nc{\hP}{\hat{P}}
\nc{\hg}{\widehat{\mathfrak{g}}}
\nc{\gO}{\mathfrak{g}[\![t]\!]}
\nc{\Ug}{\widehat{U}(\mathfrak{g})}
\nc{\dl}{/\!\!/}

\nc{\bGm}{\mathbb{G}_m}
\nc{\bGa}{\mathbb{G}_a}
\nc{\bL}{\mathbf{L}}
\nc{\bK}{\mathbf{K}}
\nc{\bJ}{\mathbf{J}}
\nc{\bI}{\mathbf{I}}
\nc{\bV}{\mathbb{V}}
\nc{\bM}{\mathbb{M}}
\nc{\bP}{\mathbb{P}}
\nc{\bA}{\mathbb{A}}
\nc{\bN}{\mathbb{N}}

\nc {\Q}{\mathrm{Q}}
\nc{\diag}{\mathrm{diag}}
\nc{\diff}{\mathrm{diff}}
\nc{\ev}{\mathrm{ev}}
\nc{\Res}{\mathrm{Res}}
\nc{\Fl}{\mathcal{F}\ell}
\nc{\Ad}{\mathrm{Ad}}
\nc{\ad}{\mathrm{ad}}
\nc{\pr}{\mathrm{pr}}
\nc{\Sl}{\mathfrak{sl}}
\nc{\gl}{\mathfrak{gl}}
\nc{\ra}{\rightarrow}
\nc{\tra}{\twoheadrightarrow}
\nc{\hra}{\hookrightarrow}
\nc{\quo}{\mathopen{ /\!/}}
\nc{\GL}{\mathrm{GL}}
\nc{\SL}{\mathrm{SL}}
\nc{\Sp}{\mathrm{Sp}}
\nc{\SO}{\mathrm{SO}}
\nc{\so}{\mathfrak{so}}
\nc{\PGL}{\mathrm{PGL}}
\nc{\Bun}{\mathrm{Bun}}
\nc{\supp}{\mathrm{supp}}
\nc{\bgamma}{\bar{\gamma}}
\nc{\ab}{\mathrm{ab}}
\nc{\td}{\mathrm{d}}
\nc{\Ht}{\mathrm{ht}}
\nc{\tX}{\tilde{X}}

\nc         {\rar}[1]       {\stackrel{#1}{\longrightarrow}}

\nc{\fa}{\mathfrak{a}}
\nc{\Hit}{\mathrm{Hit}}

\nc{\RS}{\mathrm{RS}}
\nc{\Loc}{\mathrm{Loc}}
\nc{\tLoc}{\widetilde{\mathrm{Loc}}}
\nc{\reg}{\mathrm{reg}}
\nc{\im}{\mathrm{Im}}

\nc{\tp}{\mathfrak{p}}
\nc{\cA}{\mathcal{A}}
\nc{\cY}{\mathcal{Y}}

\nc{\opp}{\mathrm{opp}}
\nc{\Ind}{\mathrm{Ind}}
\nc{\sAn}{\mathrm{can}}
\nc{\Lg}{\check{\fg}}
\nc{\cDelta}{\check{\Delta}}
\nc{\cPhi}{\check{\Phi}}
\nc{\LV}{\check{V}}
\nc{\Lh}{\check{h}}
\nc{\LG}{\check{G}}
\nc{\cT}{\check{T}}
\nc{\ct}{\check{\ft}}
\nc{\cB}{\check{B}}
\nc{\cb}{\check{\fb}}
\nc{\cN}{\check{N}}
\nc{\sN}{\mathcal{N}}
\nc{\cn}{\check{\fn}}
\nc{\Spec}{\mathrm{Spec}}
\nc{\End}{\mathrm{End}}
\nc{\crho}{\check{\rho}}
\nc{\clambda}{\check{\lambda}}

\nc{\rX}{\mathring{X}}
\nc{\ru}{\mathring{u}}

\nc{\sW}{\mathscr{W}}
\nc{\sH}{\mathscr{H}}
\nc{\sV}{\mathscr{V}}
\nc{\geom}{\mathrm{geom}}
\nc{\Irr}{\mathrm{Irr}}
\nc{\fm}{\mathfrak{m}}
\nc{\aff}{\mathrm{aff}}
\nc{\Aut}{\mathrm{Aut}}
\nc{\cJ}{\mathcal{J}}
\nc{\fs}{\mathfrak{s}}
\nc{\Stab}{\mathrm{Stab}}
\nc{\st}{\mathrm{st}}
\nc{\tw}{{\widetilde{w}}}
\nc{\gen}{\mathrm{gen}}
\nc{\genn}{\mathrm{genn}}
\nc{\sss}{\mathrm{ss}}
\nc{\fsp}{\mathfrak{sp}}
\nc{\Hom}{\mathrm{Hom}}
\nc{\bm}{\mathbf{m}}
\nc{\HG}{\mathcal{HG}}
\nc{\Gal}{\mathrm{Gal}}
\nc{\Sym}{\mathrm{Sym}}
\nc{\rank}{\mathrm{rank}}

\nc{\calX}{\mathcal{X}}
\nc{\tP}{\mathtt{P}}
\nc{\tL}{\mathtt{L}}
\nc{\tU}{\mathtt{U}}

\nc{\tW}{\widetilde{W}}
\nc{\tdb}{\tilde{b}}
\nc{\tdd}{\tilde{d}}
\nc{\tv}{\tilde{v}}
\nc{\Hk}{\on{Hk}}
\nc{\cL}{\mathcal{L}}
\nc{\talpha}{\widetilde{\alpha}}
\nc{\tQ}{{\widetilde{Q}}}
\nc{\ochi}{\overline{\chi}}
\nc{\tdelta}{\widetilde{\Delta}}
\nc{\wt}{\mathrm{wt}}
\nc{\fQ}{\mathfrak{Q}}

\nc{\Rep}{\mathrm{Rep}}
\nc{\Conn}{\mathrm{Conn}}
\nc{\Hecke}{\mathrm{Hecke}}
\nc{\Gr}{\mathrm{Gr}}
\nc{\GR}{\mathrm{GR}}
\nc{\IC}{\mathrm{IC}}
\nc{\Std}{\mathrm{Std}} 
\nc{\Db}{\mathrm{D}^{\mathrm{b}}}
\nc{\tr}{\mathrm{tr}}
\nc{\inv}{\mathrm{inv}}
\nc{\gr}{\mathrm{gr}}
\nc{\tmin}{\mathrm{min}}
\nc{\Fun}{\mathrm{Fun}~}

\nc{\bbA}{\mathbb{A}}
\nc{\mO}{\mathrm{O}}

\newcommand{\quash}[1]{}

\AtEndDocument{\bigskip{\footnotesize

\textsc{Tsao-Hsien Chen, School of Mathematics, University of Minnesota, Twin cities, Minneapolis, MN 55455 } \par
\textit{E-mail address}: \texttt{chenth@umn.edu} \par
		
\textsc{Lingfei Yi, Shanghai Center for Mathematical Sciences, Fudan University, Shanghai 200438, China} \par
\textit{E-mail address}: \texttt{yilingfei@fudan.edu.cn} \par
}}

\begin{document} 
\renewcommand{\thepart}{\Roman{part}}

\renewcommand{\partname}{\hspace*{20mm} Part}

\title{Singularities of orbit closures in 
loop spaces of symmetric varieties II: twisted cases} 
\author{Tsao-Hsien Chen and Lingfei Yi}
\date{\today} 
\maketitle
\begin{abstract}
   We study the singularities of orbit closures in loop symmetric varieties, extending our previous work \cite{CYUntwisted} to the twisted setting. We prove that the IC complexes of orbit closures are pointwise pure and satisfy a parity-vanishing property. We apply these geometric results to the study of twisted affine Lusztig--Vogan modules, establishing foundational results, including positivity properties of the twisted affine Kazhdan--Lusztig--Vogan polynomials. Along the way, we construct conical symplectic transversal slices in loop symmetric spaces. We provide applications to Langlands duality for real groups.
\end{abstract}
	
\tableofcontents

\section{Introduction}

Let $G$ be a connected reductive group over $k=\overline{\mathbb F}_p$ of characteristic $p\neq 2$, and let $\theta_0\to G$ be an involution. 
Let $X=\{g\in G|g\theta_0(g)=1\}$ be the associated symmetric space of $\theta_0$-anti-fixed points.
In our previous work \cite{CYUntwisted}, we studied the singularities of Iwahori orbit closures on the loop space $LX=X(\!(t)\!)$, extending the celebrated work of Lusztig-Vogan \cite{LV} on Borel orbit closures in $X$ to the loop space setting. Our results have found interesting applications to relative Langlands duality and to geometric Langlands on the real projective line \cite{CN}.

Motivated by connections with geometric Langlands on the twistor $\bP^1$ \cite{C,CYTempiric}, in this sequel we study the singularities of Iwahori orbit closures on the loop symmetric space
\[L^\theta X=\{\gamma\in LG|\gamma\theta(\gamma)=1\}\]
where $\theta$ is the twisted involution on the loop group $LG=G(\!(t)\!)$ given by 
$\theta(\gamma)(t)=\theta_0(\gamma(-t))$.

We show that the free $\mathbb Z[q,q^{-1}]$-module $M$, with basis indexed by equivariant irreducible local systems on Iwahori orbits in $L^\theta X$, carries a natural module structure over the affine Hecke algebra of $G$. We call $M$ the \emph{twisted affine Lusztig--Vogan module}.
 We show that the $\IC$-complexes for closures of Iwahori orbits on $L^\theta X$ give rise to a
\emph{Kazhdan-Lusztig basis}
 of $M$
with several remarkable properties.
The entries of the 
transition matrix between the 
standard basis and the Kazhdan-Lusztig basis
are polynomials 
in $q$, to be called the \emph{twisted affine Kazhdan-Lusztig-Vogan polynomials}, and 
we provide an algorithm to compute them. 
We further show that the  $\IC$-complexes of orbit closures 
are pointwise pure and satisfy a parity vanishing property
and we 
deduce from it the positivity of 
the twisted affine Kazhdan-Lusztig-Vogan polynomials.
Along the way, we generalize the construction of affine Mars-Springer slices to the twisted setting and establish several foundational results on Iwahori orbits, including a characterization of closed orbits and the placidness of orbit closures.

We deduce analogous results for the singularities of spherical orbit closures in $L^\theta X$ and give applications to relative Langlands duality for loop symmetric spaces, including the positivity of the twisted relative Kostka--Foulkes polynomials and the formality of the associated dg extension algebras.

The proofs rely on the methods developed in \cite{CYUntwisted}, but several new ideas and features arise in the twisted setting. First, some of our proofs of key results apply equally well to the untwisted setting and thus provide a more uniform approach to both settings. Second, in the twisted setting, we show that the affine Mars--Springer slices for spherical orbits are in fact symplectic varieties, in contrast to the untwisted setting, where the corresponding slices are Lagrangian. The symplectic structures on these slices, together with the parity vanishing property of IC complexes, imply that the abelian Satake category of spherical equivariant perverse sheaves on $L^\theta X$ is semisimple. This is a special feature of the twisted setting, as the corresponding Satake categories in the untwisted setting need not be semisimple.
Combining this semisimplicity with the bijection between spherical orbits on $L^\theta X$ and the so-called tempiric Langlands parameters for real reductive groups established in \cite[Theorems 1 and 2]{CYTempiric}, we obtain a version of the geometric Satake equivalence for real reductive groups. This provides an important step toward the Langlands duality for real groups proposed in \cite[Conjecture 20]{CYTempiric}.

The paper is organized as follows.
In Section \ref{orbits}, we study the parametrization of Iwahori and spherical orbits, following \cite{CYMatsuki}. In Section \ref{placidness}, we prove the placidness of orbit closures in $L^\theta X$, a key geometric property, and use it to establish the existence of a dimension theory on $L^\theta X$. In Section \ref{Transversal slices}, we construct conical transversal slices to both spherical and Iwahori orbits in $L^\theta X$ and establish several foundational properties of these slices. In particular, we show that the spherical slices are symplectic varieties.
In Section \ref{Affine Hecke modules}, we construct the twisted affine Lusztig--Vogan modules and introduce the twisted affine Kazhdan--Lusztig--Vogan polynomials. We also give an algorithm for computing these polynomials. In Section \ref{main results}, we prove pointwise purity and parity vanishing for the $\IC$-complexes of orbit closures and deduce the positivity of the twisted affine Kazhdan--Lusztig--Vogan polynomials. Finally, in Section \ref{applications}, we discuss applications to Langlands duality for real reductive groups.

\subsection*{Acknowledgements} 
T.-H.~Chen would like to thank Weiqiang Wang for useful discussions. He would also like to thank the National Center for Theoretical Sciences (NCTS) in Taipei for its hospitality, where part of this work were done.
The research of
T.-H.~Chen is supported by NSF grant DMS-2143722 and Simons Fellowships.
Lingfei Yi is supported by the Grant No. JIH1414062Y of Fudan University.

\section{Group data}\label{group}
\subsection{Symmetric varieties}\label{sym}
We recall some basic notations about symmetric varieties following  
 \cite{Springer} and  \cite{RSbook}.
Let $G$ be a  connected reductive group over $k=\overline{\mathbb F}_p$
of characteristic $p\neq 2$,
and  $\theta_0:G\to G$ an involution.
Let $T_0\subset B_0\subset G$ be
$\theta_0$-stable maximal torus and Borel subgroup.
Let $K=G^{\theta_0}$ and 
$X=\{g\in|g\theta_0(g)=1\}$ be 
the assoicated symmetric subgroup and 
symmetric space.
The group $G$-acts on $X$ by the 
$\theta_0$-conjugation: $g\cdot x=gx\theta_0(g)^{-1}$.

\subsection{Loop spaces}
Let $F=k((t)), \cO=k[[t]]$
be the formal Laurent and Taylor series rings
with variable $t$  respectively.
Let $LG$ and $L^+G$
be the loop group and arc group of $G$ with $k$-points 
$LG(k)=G(F)$ and $L^+G(k)=G(\cO)$.

Let $I_0\subset L^+G$ be the Iwahori subgroup associated to $B_0$.
Denote the associated set of simple affine roots by $\Delta_\aff$.
We fix a set of representatives of the extended Weyl group 
$\tW=N_{G(F)}(T_0(\cO))/T_0(\cO)$ in $N_{G(F)}(T_0(\cO))$ by
$\tw=t^\lambda w$, $w\in N_{G}(T_0)$.

Consider induced twisted involution on $LG$:
\[
\theta(\gamma(t))=\theta_0(\gamma(-t)).
\]
We denote by $LG^\theta=\{\gamma\in LG|\theta (\gamma)=\gamma\}$ the 
 $\theta$-fixed points and 
\[L^\theta X=(LG)^{\on{inv}\circ\theta}=\{\gamma\in LG|\gamma\theta(\gamma)=1\}\]
the $\theta$-anit-fixed points, referred to as 
 loop symmetric subgroup and loop symmetric space of $G$.
We have $LG^{\theta}(k)=G(F)^\theta$
and $L^\theta X(k)=G(F)^{\on{inv}\circ\theta}$.
 The loop group $LG$ acts on $L^\theta X$
 by the $\theta$-conjugation action 
 $g\cdot\gamma=g\gamma\theta(g)^{-1}$.
By \cite[Proposition 25]{CYMatsuki}, 
the map 
\[\tau:LG\to L^\theta X,\ \  g\to g\theta(g)^{-1}\]
induces a bijection on $k$-points
\begin{equation}\label{eq:tau}
G(F)/G(F)^\theta\cong G(F)^{\on{inv}\circ\theta}.
\end{equation}

\section{Orbits parametrization}\label{orbits}

\subsection{Spherical and Iwahori orbits}\label{ss:recollection orbits}
Assume $\theta_0(B_0^-)=\Ad_{w_1}B_0$ for some $w_1\in N_{G}(T_0)$
where $B_0^-$ is the opposite Borel.
Denote $w_2=\theta_0(w_1)w_1$.
By \cite[Theorem 13]{CYMatsuki}:

\begin{prop}\label{p:spherical orbits}
The $L^+G$-orbits on $L^\theta X$ are represented by 
$t^\lambda g_0 w_1^{-1}$,
where $\lambda=-w_1^{-1}\theta_0(\lambda)\in X_*(T)$,
$g_0$ represents one of the finitely many orbits of
the Levi subgroup $L_\lambda$ on
$A_{\lambda,0}=\{g_0\in L_\lambda\mid g_0=w_2\Ad_{w_1^{-1}}\theta_0(g_0)^{-1}(-1)^\lambda\}$
with respect to the action $h\cdot g_0=hg\Ad_{w_1^{-1}}\theta_0(h^{-1})$.
\end{prop}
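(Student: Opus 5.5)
The plan is to translate $L^+G$-orbits on $L^\theta X$ into double cosets and then use Iwasawa-type decompositions. By \eqref{eq:tau}, $\tau$ identifies $L^\theta X(k)$ with $G(F)/G(F)^\theta$, equivariantly for left multiplication on $G(F)$ and $\theta$-conjugation on $L^\theta X$. So $L^+G$-orbits on $L^\theta X$ are in bijection with the double cosets $G(\cO)\backslash G(F)/G(F)^\theta$.

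Next I would rigidify by a Cartan-type step. Every element of $L^\theta X$ lies in $G(F)=G(\cO)t^{X_*(T_0)}G(\cO)$, and I expect to be able to move it by $\theta$-conjugation into a normal form. First I would put it in the form $t^\lambda g w_1^{-1}$ with $g\in G(\cO)$. The factor $w_1^{-1}$ is there because $\theta_0$ sends $B_0$ to a conjugate of the opposite Borel, so $\theta$ sends positive affine directions to negative ones up to $w_1$. The defining equation $\gamma\theta(\gamma)=1$, applied to $\gamma=t^\lambda g w_1^{-1}$, gives
\[
t^\lambda g w_1^{-1}\,\theta_0(t^\lambda)(-1)^\lambda\,\theta(g)\,\theta_0(w_1)^{-1}=1 .
\]
Reading off the valuation part should force $\lambda=-w_1^{-1}\theta_0(\lambda)$. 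Here I would use that the invariant $\lambda$ is unique up to the Weyl group, as in the Cartan decomposition, and that a dominance normalization picks this particular representative. The remaining identity then becomes an equation on $g$, which is where the sign $(-1)^\lambda$ enters.

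Then I would use the parahoric/Levi reduction. The stabilizer of $t^\lambda$-type data in $G(\cO)$ is $G(\cO)\cap t^\lambda G(\cO)t^{-\lambda}$, which surjects onto $L_\lambda$ with pro-unipotent kernel. Via a contraction argument (the $\bGm$-action by loop rotation, together with a Lang/Hensel-type vanishing of $H^1$ for pro-unipotent groups with an involution, using $p\ne 2$), I would show that the residual classification reduces modulo $t$ to $g_0\in L_\lambda$ satisfying $g_0=w_2\Ad_{w_1^{-1}}\theta_0(g_0)^{-1}(-1)^\lambda$. The action is the twisted conjugation $h\cdot g_0=hg_0\Ad_{w_1^{-1}}\theta_0(h)^{-1}$, and two such $g_0$ give the same $L^+G$-orbit iff they are $L_\lambda$-related. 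Finiteness follows because $A_{\lambda,0}$ is a twisted symmetric variety for the involution $\Ad_{w_1^{-1}}\theta_0$ of $L_\lambda$ (after translating by a point), which has finitely many orbits by Richardson/Springer.

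The main obstacle is the second step: showing that one can simultaneously choose the coweight representative so that $\lambda=-w_1^{-1}\theta_0(\lambda)$, and that the pro-unipotent part can be killed without changing $\lambda$. I would attempt it by induction on the $t$-adic filtration of $G(\cO)\cap t^\lambda G(\cO)t^{-\lambda}$, solving a linear equation $x+\sigma(x)=y$ on each graded piece, where $\sigma$ is an involution and $2$ is invertible. This is essentially \cite[Theorem 13]{CYMatsuki}, which the statement cites, so in the paper one may simply invoke it.
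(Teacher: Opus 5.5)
Your outline is correct and is essentially the argument behind \cite[Theorem 13]{CYMatsuki}, which the paper simply cites without proof; your reduction (Cartan decomposition, then $(L^+G\cap\Ad_{t^\lambda}L^+G)$-orbits on $(t^\lambda L^+G)^{\on{inv}\circ\theta}$, then evaluation at $t=0$ onto $L_\lambda\backslash A_{\lambda,0}$) is the one recalled in the proof of Theorem~\ref{Placidness of orbits}. The one point to sharpen is the unipotent step: $\psi_x=\Ad_x\circ\theta$, with $x=t^\lambda g_0w_1^{-1}$, interchanges $L^+G$ and $\Ad_{t^\lambda}L^+G$, so the fiber is $\{ux\mid u\in H,\ u\psi_x(u)=1\}$ with $H=L^+G\cap\Ad_{t^\lambda}L^+G$ acting by $u\mapsto hu\psi_x(h)^{-1}$, and the successive approximation on $\ker(H\to L_\lambda)$ should use the weight filtration of $h\mapsto\Ad_{s^\lambda}h(s^{-2}t)$, which commutes with $\psi_x$, rather than the $t$-adic filtration, which $\psi_x$ does not preserve.
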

We call the representatives of the above form the \emph{standard spherical representatives}.
Denote the set of $L^+G$-orbits on $L^\theta X$ by $\sN$.
For each $\nu\in\sN$, let $L^\theta X_\nu$ be the associated orbit on $L^\theta X$.
For $\nu,\nu'\in\sN$, let $\nu\leq\nu'$ if and only if
$\overline{L^\theta X}_\nu\subset\overline{L^\theta X}_{\nu'}$.

Since $T_0, T_0(\cO), I_0$ are $\theta$-stable, 
$\theta$ acts on $\tW$.
For each $\tw=t^\lambda w\in\tW$, denote $t_\tw=(\tw\theta(\tw))^{-1}\in T_0$.
By \cite[Theorem 35]{CYMatsuki}:

\begin{prop}\label{p:Iwahori orbits}
The $I_0$-orbits on $L^\theta X$ are represented by
$\tw g_0$, where $\theta_0(\tw)=\tw^{-1}\in\tW$,
$g_0$ represents one of the finitely many $T_0$-orbits on 
$T_\tw=\{g_0\in T_0\mid \Ad_w g_0=\theta_0(g_0)^{-1}t_\tw\}$
with respect to the action $h\cdot g_0=(\Ad_{w^{-1}}h)g_0\theta_0(h)^{-1}$.
\end{prop}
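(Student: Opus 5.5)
The statement is quoted from \cite[Theorem 35]{CYMatsuki}, but a self-contained argument goes through the bijection \eqref{eq:tau}. Under $\tau$, $I_0$-orbits on $L^\theta X$ correspond to double cosets $I_0\backslash G(F)/G(F)^\theta$. Rather than work with double cosets, I will argue directly on $L^\theta X$ with the $\theta$-twisted conjugation action. The proof has three steps.

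\textbf{Step 1: reduction to the normalizer.} I will show that every $\gamma\in L^\theta X$ is $I_0$-conjugate to an element of $N_{G(F)}(T_0(\cO))$. The tool is the Bruhat decomposition $G(F)=\bigsqcup_{\tw} I_0\tw I_0$. Write $\gamma\in I_0\tw I_0$. Since $\theta$ preserves $I_0$ and $\gamma\theta(\gamma)=1$, the inverse $\gamma^{-1}=\theta(\gamma)$ lies both in $I_0\tw^{-1}I_0$ and in $I_0\theta(\tw)I_0$. Hence $\theta(\tw)=\tw^{-1}$ in $\tW$, which is the stated condition $\theta_0(\tw)=\tw^{-1}$ once one checks that $\theta$ acts on $\tW$ through $\theta_0$ on representatives, up to the sign twist $(-1)^\lambda\in T_0$.

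Next, write $I_0=T_0(\cO)\cdot I_0^u$ with $I_0^u$ pro-unipotent. I then need to move $\gamma$ into $I_0^u\tw T_0(\cO)$ intersected with the anti-fixed locus. The argument is the standard Lang/Springer-type one for twisted involutions, done inductively along the congruence filtration of $I_0^u$. At each graded step the obstruction is a cohomology class $H^1(\bZ/2,V)$ for a $k$-vector space $V$, and this vanishes because $p\neq2$. Passing to the limit is justified by the pro-unipotence of $I_0^u$.

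I expect this step to be the main obstacle. The difficulty is that $I_0\tw I_0\neq I_0\times\tw\times I_0$, so one must use the unique factorization $I_0\tw I_0=(I_0^u\cap\tw I_0^{u,-}\tw^{-1})\,\tw\, I_0$ and check that the $\theta$-twisted action is compatible with it. My first attempt will follow the finite-dimensional Springer/Richardson argument for $B\backslash G/K$, replacing $B$ by $I_0$.

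\textbf{Step 2: computing the torus part.} After Step 1, $\gamma=\tw g$ with $g\in T_0(\cO)$. Writing $g=g_0u$ with $g_0\in T_0$ and $u\in T_0(\cO)$ congruent to $1$ mod $t$, the same $H^1$-vanishing for the pro-unipotent group $\ker(T_0(\cO)\to T_0)$ lets me conjugate $u$ away, so I may assume $g=g_0\in T_0$. Expanding $\tw g_0\,\theta(\tw g_0)=1$ gives
\[
\tw\,g_0\,\theta(\tw)\,\theta_0(g_0)=1 .
\]
Moving $\theta(\tw)$ past $\theta_0(g_0)$ via $\Ad_{\theta(\tw)^{-1}}$ and using $t_\tw=(\tw\theta(\tw))^{-1}$, this becomes exactly $\Ad_w g_0=\theta_0(g_0)^{-1}t_\tw$, where I use that $\tw\theta(\tw)\in T_0$ and that $\Ad_{\tw}$ agrees with $\Ad_w$ on $T_0$. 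So $g_0\in T_\tw$.

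\textbf{Step 3: residual action and finiteness.} For $h\in T_0$,
\[
h\,\tw g_0\,\theta(h)^{-1}=\tw\,(\Ad_{\tw^{-1}}h)\,g_0\,\theta_0(h)^{-1},
\]
which is the stated action $h\cdot g_0=(\Ad_{w^{-1}}h)g_0\theta_0(h)^{-1}$. For injectivity, suppose $\tw g_0$ and $\tw' g_0'$ are $I_0$-conjugate. The Bruhat cell forces $\tw=\tw'$. The stabilizer analysis then reduces to $T_0$, using that the unipotent parts can be absorbed, again by $H^1$-vanishing.

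Finiteness of the $T_0$-orbits on $T_\tw$ follows because $T_\tw$ is a torsor, or empty, for the group $\{s\in T_0:\Ad_w s=\theta_0(s)^{-1}\}$. The $T_0$-action has orbits that are cosets of the image of $h\mapsto(\Ad_{w^{-1}}h)\theta_0(h)^{-1}$, and this image has finite index, being the identity component up to $2$-torsion.
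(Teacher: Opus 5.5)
The paper does not prove this statement itself; it quotes it from \cite[Theorem 35]{CYMatsuki}, so your sketch has to stand on its own. Several parts are fine: deducing $\theta(\tw)=\tw^{-1}$ from the Bruhat decomposition, the torus computations in Steps 2--3, and the finiteness count. The gap is Step 1, which carries the entire content of the proposition. You assert that the obstruction lives in some $H^1(\bZ/2,V)$, but you never say which group carries which involution, or why the relevant element is a cocycle. Also, the obstacle you single out is not the real one. Reaching $\tw I_0$ is free: for any factorization $\gamma=i_1\tw i_2$, twisted conjugation by $i_1^{-1}$ gives $\tw b$ with $b=i_2\theta(i_1)\in I_0$, and no unique factorization is needed. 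The real issue is that $I_0^u$ acting on $\gamma$ by $c\mapsto c\gamma\theta(c)^{-1}$ is not a group acting on itself through an involution. So there is no $\bZ/2$-module to take cohomology of until you analyse the residual problem. Moreover, the involution that eventually appears contains $\Ad_{\theta(\tw)}$, which shifts $t$-adic depth. It therefore does not preserve the congruence filtration of $I_0^u$ you propose to induct along.

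Here is the missing step. Put $s=\theta(\tw)\tw\in T_0$ and $\eta=\Ad_{\theta(\tw)}\circ\theta$, so that $\eta^2=\Ad_s$ and $\eta(I_0)=\Ad_{\tw^{-1}}I_0$.

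1. The condition $\tw b\,\theta(\tw b)=1$ is equivalent to $b\,\eta(b)=s^{-1}$. Then $\eta(b)=b^{-1}s^{-1}$ lies in $I_0\cap\eta(I_0)=:H$, so $b\in H$.
2. $H=T_0(\cO)\ltimes U$ with $U=I_0^u\cap\Ad_{\tw^{-1}}I_0^u$, and both factors are $\eta$-stable.
3. The elements of $I_0$ preserving $\tw I_0$ are exactly $\Ad_{\tw}d$ with $d\in H$. They act by $b\mapsto d\,b\,\eta(d)^{-1}$.
4. Write $b=tu$. The constraint splits into $t\eta(t)=s^{-1}$ and $\kappa(u)=u^{-1}$, where $\kappa=\Ad_{\eta(t)}\circ\eta$. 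The first relation is precisely what makes $\kappa$ an honest involution of $U$, since $\kappa^2=\Ad_{\eta(t)ts}=1$.
5. Only now does $H^1$-vanishing apply: $u=e\kappa(e)^{-1}$ for some $e\in U$. For instance, take $e=u^{1/2}$, the unique square root in the pro-$p$ procyclic closure of $u$ ($p$ odd), so $\kappa(e)=e^{-1}$. Then $d=te^{-1}t^{-1}$ moves $tu$ to $t$.

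The finite-dimensional Springer/Richardson argument you say you will imitate is actually different: it finds a $\psi_\gamma$-stable maximal torus $\Ad_bT_0$ ($b\in I_0$) in $I_0\cap\Ad_\gamma I_0$, where $\psi_\gamma=\Ad_\gamma\circ\theta$, so that $b^{-1}\gamma\theta(b)$ normalizes $T_0$.

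Finally, injectivity in Step 3 needs no $H^1$. If $c\cdot\tw g_0=\tw g_0'$, then $c\in I_0\cap\Ad_{\tw}I_0$. Applying the $\theta$-equivariant projection $I_0\to T_0$ gives $g_0'=(\Ad_{w^{-1}}h)g_0\theta_0(h)^{-1}$, where $h$ is the image of $c$.
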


We call the representatives of the above form the 
\emph{standard Iwahori representatives}.
Denote the set of $I_0$-orbits on $L^\theta X$ by $\sV$.
For each $v\in\sV$, let $\cO_v$ be the associated orbit on $L^\theta X$.
We fix a set of standard Iwahori representatives 
$n_v:=\tw g_0$, $v\in\sV$.
For $v,v'\in\sV$, let $v\leq v'$ if and only if
$\overline\cO_v\subset\overline\cO_{v'}$.

\begin{rem}
   In \cite{CYMatsuki}, the authors have chosen another pair $(B,T)$ containing the split torus. 
This choice is to match the convention in \cite{NadlerMatsuki}, and to guarantee that the maximal torus $T$ is stable under 
the real conjugation associated to $\theta_0$.
Changing the Borel subgroup only changes the element $w_1$ in $\theta_0(B^-)=\Ad_{w_1}B$ where $B^-$ is the opposite Borel.
All results in \cite{CYMatsuki} 
not involving the real conjugation,
especially the first bijection in Theorem 13
and the first two bijections in Theorem 35 of \emph{loc. cit.},
are applicable for the choice $(B_0,T_0)$ without any changes.
\end{rem}

\subsection{Simple parahoric orbits and closed orbits}
In this section we collect some basic facts on Iwahori orbits.
Their proofs are the same as the corresponding results in the untwisted setting.

For a simple affine root $\alpha\in\Delta_\aff$,
let $P_\alpha$ be the associated standard simple parahoric subgroup.
Let $P_\alpha^+$ and $L_\alpha$ 
be the pro-unipotent radical and Levi subgroup of $P_\alpha$.
Denote by $s_\alpha\in\tW$ the simple reflection with respect to 
the hyperplane defined by $\alpha$ and take any lift of it, then
\[
P_\alpha=I_0\sqcup I_0 s_\alpha I_0.
\]

Denote by $w_v$ the twisted involution in $\tW$ 
represented by $n_v$, $v\in\sV$.
We associate to $v$ an involution $\psi_v=\on{Ad}_{n_v}\circ\theta$
so that $\Stab_{LG}(n_v)=(LG)^{\psi_v}$.
The stabilizer of $P_\alpha$ on $n_v$ is
$P_{\alpha,v}:=P_\alpha^{\psi_v}$.
For $\alpha\in\Delta_\aff$,
observe
\[
I_0\backslash P_\alpha \cO_v\cong 
I_0\backslash P_\alpha/P_{\alpha,v}\cong
\bP^1/H_{\alpha,v}
\]
where $H_{\alpha,v}\subset\Aut(\bP^1)\cong\mathrm{PGL}_2$ 
is the image of the right action of $P_{\alpha,v}$ 
on $I_0\backslash P_\alpha\cong\bP^1$.

Denote by $\phi:P_\alpha\rightarrow\Aut(\bP^1)$ the action.
Let $U_{\pm\alpha}\subset P_\alpha$ 
be the affine root subgroups for $\pm\alpha$.
Choose isomorphisms $u_{\pm\alpha}:k\xrightarrow{\sim}U_{\pm\alpha}$
as in \cite[\S4.1.1]{MS},
so that $n_\alpha=u_\alpha(1)u_{-\alpha}(-1)u_\alpha(1)$ 
is a representative of $s_\alpha$.
Observe that by looking at the image under $\tau$, we can see
$P_\alpha\cO_v$ is a union of finitely many $I$-orbits,
so that $\bP^1/H_{\alpha,v}$ is a finite set.
In particular, $H_{\alpha,v}$ is infinite.

The group $H_{\alpha,v}$ is classified as in 
\cite[Lemma 28]{CYUntwisted},
labeled by type I, IIa, IIb, IIIa, IIIb, IVa, and IVb.
We say an orbit $\cO_v$ is of type I, IIa,... for $s_\alpha$
if $H_{\alpha,v}$ is of type I, IIa,....
Then $\cO_v$ is open in $P_\alpha\cO_v$
if and only if $\cO_v$ is of type I, IIb, IIIb, or IVb for $s_\alpha$.

The involution $\psi_v=\Ad_{n_v}\circ\theta$ preserves $T_0$,
thus acts on the set of affine roots.
Since $\theta(n_v)=n_v^{-1}$,
the action of $\psi_v$ on the affine roots is given by
$w_v\circ\theta=\theta\circ w_v^{-1}$.
The type of $\cO_v$ for $s_\alpha$ can be read off
from \cite[Lemma 29]{CYUntwisted}.

Denote by $\Omega\subset\widetilde{W}$ the elements of length zero.
The closed orbits are characterized by \cite[Lemma 31]{CYUntwisted}
with the same proof replacing $\tau(I_0x_v)$ with $\cO_v$.

\section{Placidness of orbit closures}\label{placidness}
We show that the spherical orbits and Iwahori orbits on the loop symmetric space 
$L^\theta X$ are placid schemes. 
We recall the following basic placidness results for loop spaces due to Drinfeld:

\begin{thm}\label{placid of loop spaces}
Let $Y$ be a smooth affine scheme of $F$. The loop space 
$LY$ of $Y$ is a placid ind-scheme.

\end{thm}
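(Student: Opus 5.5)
The plan is to write $LY$ as a filtered colimit of affine schemes of finite type over $L^+$-type pro-schemes with smooth affine transition maps, reducing to the case $Y=\bA^n_F$. First, since $Y$ is affine of finite type over $F$, choose a closed embedding $Y\hookrightarrow \bA^n_F$ cut out by polynomials $f_1,\dots,f_m\in F[x_1,\dots,x_n]$. Clearing denominators, we may assume $f_j\in \cO[x_1,\dots,x_n]$. Then $L\bA^n=\varinjlim_N t^{-N}\cO^n$, where each $t^{-N}L^+\bA^n\cong \prod_{i\ge 0}\bA^n$ is an infinite product of affine spaces. This is placid: it is the limit of the finite-dimensional affine spaces $t^{-N}\cO^n/t^{M}\cO^n$ along smooth (linear, surjective) affine transition maps. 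The transition maps $t^{-N}\cO^n\hookrightarrow t^{-N-1}\cO^n$ are finitely presented closed embeddings. So $L\bA^n$ is a placid ind-scheme.

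Next, I would set $Y_N=LY\cap t^{-N}L^+\bA^n$, a closed subscheme of $t^{-N}L^+\bA^n$. After rescaling coordinates by $t^N$, $Y_N$ is identified with the arc space $L^+\mathcal Y_N$ of an affine $\cO$-scheme $\mathcal Y_N$ of finite type whose generic fiber is $Y$. The closed embeddings $Y_N\hookrightarrow Y_{N+1}$ are finitely presented, since they are cut out by finitely many coordinate conditions. It therefore suffices to show that each $L^+\mathcal Y_N$, or a cofinal family of closed subschemes exhausting it, is placid.

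The key input, and the main obstacle, is that $\mathcal Y_N$ need not be smooth over $\cO$ even though $Y$ is smooth over $F$. Here I would use Néron smoothening (Bosch–Lütkebohmert–Raynaud), or more concretely the Greenberg–Elkik approximation results. These give, for each $N$, a finite stratification, via the truncation levels measured by the Jacobian ideal order $\mathrm{ord}_t\,\mathrm{Jac}\le e$, of $L^+\mathcal Y_N$. On each piece the truncation maps $\pi_{M+1}\to\pi_M$ to finite jet schemes are, for $M\ge$ a constant $c(e)$, Zariski locally trivial affine space fibrations (Denef–Loeser's lemma). Smoothness of $Y$ over $F$ ensures that the Jacobian order is finite on every arc, so these pieces exhaust $L^+\mathcal Y_N$ up to increasing the index. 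Following Drinfeld, I would rather define the ind-structure directly by the closed subschemes $Y_{N,e}=\{\gamma\in Y_N:\mathrm{ord}\,\mathrm{Jac}(\gamma)\le e\}$. Each $Y_{N,e}$ is open in $Y_N$ but closed in the union over larger $N$ after reindexing; this reindexing is what needs care. Each $Y_{N,e}$ is then the limit of its images in the jet schemes of level $M\ge c(e)$, with smooth affine transitions, hence placid.

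Finally, I would check that the transition maps of the resulting filtered system $\{Y_{N,e}\}$ are finitely presented closed immersions. The system is cofinal in $LY$ as $N,e\to\infty$, since each $k$-point, and more generally each test-ring point, has bounded pole order and finite Jacobian order. This gives the placid ind-scheme structure on $LY$.
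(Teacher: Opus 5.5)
The paper does not reprove this statement; it cites Drinfeld's theorem \cite[Theorem 6.3]{D}. Your first reductions are sound: $Y_N=LY\cap t^{-N}L^+\bA^n\cong L^+\mathcal Y_N$, and $Y_N\hookrightarrow Y_{N+1}$ is an fp closed embedding. But the ind-structure you then propose is not one. The $Y_{N,e}$ are open in $Y_N$, and $Y_N$ is closed in every $Y_{N'}$. So an open piece of $Y_N$ is closed in $Y_{N'}$ only if it is open and closed in $Y_N$, and no reindexing can fix this. For example, take $Y=\bA^1_F\hookrightarrow\bA^2_F$, $x\mapsto(x,x^2)$. Then $\mathcal Y_N=\{t^Nv=u^2\}$ with Jacobian ideal $(2u,t^N)$, and for $e<N$ the set $Y_{N,e}$ is the locus where $x$ has pole order $\geq N-e$, which is not closed. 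In fact the stratification is superfluous. Since $\mathcal Y_N$ is affine with smooth generic fibre $Y$, its Jacobian ideal (on each connected component of $Y$, say) becomes the unit ideal after inverting $t$, so it contains some $t^{c_N}$. Hence the Jacobian order is bounded by $c_N$ on all of $Y_N$, and $Y_{N,e}=Y_N$ for $e\geq c_N$. Everything therefore reduces to showing that $Y_N=L^+\mathcal Y_N$ itself is placid.

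That is where the genuine gap lies. The Denef--Loeser lemma is a statement about constructible subsets of jet spaces and piecewise trivial fibrations on field-valued points. A placid presentation instead needs a scheme-theoretic isomorphism $Y_N\cong\lim_M Z_M$ with $Z_M$ finitely presented and $Z_{M+1}\to Z_M$ smooth affine. The images $\pi_M(Y_N)$ carry no natural scheme structure, and piecewise triviality over a stratification does not make the maps between them smooth. With reduced structures, the limit could at best be $(Y_N)_{\mathrm{red}}$. This matters because $Y_N$ is in general non-reduced. For $Y=\mathbb G_{m,F}=\{xy=1\}$, the point $x=1+\epsilon t^{-1}$, $y=1-\epsilon t^{-1}$ over $k[\epsilon]/(\epsilon^2)$ lies in $Y_1$ but not in $(Y_1)_{\mathrm{red}}$. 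For the same reason N\'eron smoothening cannot be the input: the arc space of a smooth $\cO$-model is reduced, so it cannot be isomorphic to a non-reduced $Y_N$. What you would need is a scheme-theoretic Newton (Tougeron--Elkik) lemma over $R[[t]]$ for an arbitrary test ring $R$. It would have to show that $t^{c_N}\in\mathrm{Jac}(\mathcal Y_N)$ yields finitely presented $Z_M$ with smooth affine transition maps and $\lim_M Z_M\cong L^+\mathcal Y_N$. That is exactly the nontrivial content of the theorem the paper cites, so your sketch presupposes it rather than proves it.
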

\begin{proof}
    This is \cite[Theorem 6.3]{D}.
\end{proof}

\begin{thm}\label{Placidness of orbits}
(i)
$L^\theta X$ is a placid ind-scheme.

(ii) The orbits closures 
$\overline\cO_v$ (resp. 
$\overline {L^\theta X}_\nu$) are irreducible placid schemes.
 
(iii) The open embedding $\cO_v\to\overline\cO_v$ (resp. $L^\theta X_\nu\to\overline{L^\theta X}_\nu$) and the closed embedding
$\overline\cO_v\to \overline\cO_{v'}$, $v\leq v'$
(resp. $\overline{L^\theta X}_\nu\to\overline{L^\theta X}_{\nu'}$, $\nu\leq\nu'$)
are finitely presented.
\end{thm}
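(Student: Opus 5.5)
Part (i) comes from realizing $L^\theta X$ as a loop space of a smooth affine scheme over $F$, so that Theorem \ref{placid of loop spaces} applies. Let $\sigma$ be the automorphism $t\mapsto -t$ of $F$, and let $F_0=F^{\sigma}=k(\!(t^2)\!)$, so $F/F_0$ is a quadratic extension. The twisted involution $\theta$ is then $\theta_0$ composed with the Galois action. Hence $L^\theta X$ is the loop space over $F_0$ (in the variable $s=t^2$) of the $F_0$-scheme
\[Y=\{g\in \Res_{F/F_0}G_F\mid g\,\theta(g)=1\}.\]
This $Y$ is the fixed-point scheme of the involution $g\mapsto\theta(g)^{-1}$ on the smooth affine group scheme $\Res_{F/F_0}G$. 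Since $p\neq 2$, the fixed points of a finite-order automorphism of order prime to $p$ on a smooth affine scheme form a smooth closed subscheme. Thus $Y$ is smooth and affine, and Theorem \ref{placid of loop spaces} gives (i).

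For (ii) and (iii), I will follow the untwisted argument of \cite{CYUntwisted}. The plan is to exhibit each orbit closure as a closed subscheme of finite presentation inside a placid affine piece. First fix a $\theta$-stable congruence subgroup $L^+G_n=\ker(L^+G\to G(\cO/t^n))$. For each $v$ I claim there is an $n$, depending on the orbit, such that $L^+G_n$ acts on $\overline\cO_v$ (resp. $\overline{L^\theta X}_\nu$) with quotient a finite type scheme. To get this, I use that $\overline{L^\theta X}_\nu$ is contained in a bounded piece $L^\theta X\cap \overline{L^+Gt^\mu L^+G}$ of $LG$. By Proposition \ref{p:spherical orbits}, only finitely many spherical orbits and finitely many Iwahori orbits lie in such a bounded piece. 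The group $L^+G_n$ acts on the bounded piece through the $\theta$-twisted conjugation $g\cdot\gamma=g\gamma\theta(g)^{-1}$. Because $\theta(L^+G_n)=L^+G_n$, and $L^+G_n$ is normalized by $L^+G$, the orbits $\overline\cO_v$ are unions of $L^+G_n$-orbits.

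The key step is to show that for $n\gg0$ the $L^+G_n$-action on $\overline\cO_v$ is free modulo a smooth stabilizer, making $\overline\cO_v\to\overline\cO_v/L^+G_n$ a torsor (up to pro-smooth affine fibers). Equivalently, I want $\overline\cO_v=\overline\cO_v^{(n)}\times^{\text{twisted}}$ (pro-unipotent affine space). I would get this from the map $\tau$ of \eqref{eq:tau}, which gives $\cO_v\cong I_0/(I_0\cap (LG)^{\psi_v})$, so the orbit is $\varprojlim$ of smooth finite type schemes with smooth affine transition maps. For the closure, I would take the scheme-theoretic closure of $\cO_v$ in $\overline{L^\theta X}_\nu\cap$ (the $n$-truncated bounded piece). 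I would argue that the truncation map is an affine-space fibration over the finite type quotient, using Lang's theorem and the fact that $H^1$ of the pro-unipotent $L^+G_n$ with the involution is trivial. The closure is then the pullback of a finite type closure, and in particular irreducible since the orbit is irreducible ($I_0$ is connected).

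Given this presentation, (iii) follows formally. Closed embeddings of closures are pulled back from closed embeddings of finite type schemes at a common level $n$, so they are finitely presented. Open embeddings are complements of finitely many smaller orbit closures, which are again pulled back from finite type level. I expect the main obstacle to be the second step: showing that the projection to the finite type quotient has smooth affine (pro-unipotent) fibers. This needs a twisted version of the contraction/Lang argument, compatible with $t\mapsto -t$. I would first try to establish it for the whole bounded piece of $L^\theta X$, using the smooth scheme $Y$ and Hensel-type lifting for $L^+Y$, and then restrict to the orbit closures.
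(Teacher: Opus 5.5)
Your proof of (i) is the paper's: $L^\theta X\cong LY$ with $Y=(\Res_{F/F'}G_F)^{\on{inv}\circ\theta}$ smooth affine over $F'=k(\!(t^2)\!)$, and Drinfeld's theorem applies. For (ii)--(iii), however, there is a genuine gap. Everything hinges on your key step: that for $n\gg0$ the twisted-conjugation quotient $\overline\cO_v\to\overline\cO_v/L^+G_n$ is a finite type scheme over which the map is pro-smooth with affine fibers. You do not prove this, and as formulated it is not right. The $L^+G_n$-action is far from free. The stabilizer of $\gamma$ is $L^+G_n\cap(LG)^{\psi_\gamma}$ with $\psi_\gamma=\Ad_\gamma\circ\theta$. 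This is infinite dimensional (already $(L^+G_n)^\theta$ at $\gamma=1$), and it varies from one $L^+G$-orbit in $\overline\cO_v$ to another. So there is no torsor structure and no evident flatness of the quotient map. Lang's theorem and an $H^1$-vanishing do not address this. Hensel lifting for $L^+Y$ does not apply either, because the bounded pieces of $LY$ are not arc spaces. Likewise, knowing that $\cO_v\cong I_0/(I_0\cap(LG)^{\psi_v})$ is a limit of smooth schemes does not show that $\cO_v$ is finitely presented inside a placid ambient scheme. Without that, your claim that the closure is pulled back from a finite type closure is circular.

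The missing idea is how the paper obtains finite presentation of the orbit itself. Placidness of the ambient piece is formal. The map $(\overline{LG_\lambda})^{\on{inv}\circ\theta}\to L^\theta X$ is the base change of the fp embedding $\overline{LG_\lambda}\to LG$, so $(\overline{LG_\lambda})^{\on{inv}\circ\theta}$ is a placid scheme by (i), with no congruence quotients needed. The orbit inclusion $L^\theta X_\nu\to(LG_\lambda)^{\on{inv}\circ\theta}$ is fp because, via $t^\lambda\gamma(t)\mapsto\gamma(0)$, it is a base change of $L_\lambda\backslash\cO_\nu\to L_\lambda\backslash A_{\lambda,0}$. That map is the inclusion of an orbit of the Levi $L_\lambda$ in the finite type variety $A_{\lambda,0}$, via the Cartesian diagram coming from \cite[Proposition 9]{CYMatsuki}. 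Noetherian descent for fp morphisms then gives $\overline{L^\theta X_\nu}$ as an fp closed subscheme of the placid scheme $(\overline{LG_\lambda})^{\on{inv}\circ\theta}$, with $L^\theta X_\nu$ fp open in it. For Iwahori orbits one adds that $\cO_v\to L^\theta X_\nu$ is fp, as in the untwisted case. With that input, your closing deduction of (iii) goes through: closures are pulled back from a finite level, and orbits are complements of finitely many smaller closures.
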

\begin{proof}
 Let $\on{Res}_{F/F'}G_F$
be the Weil restriction of $G_F=G\times_k F$
along the quadratic extension $F'=k((t^2))\subset F$. 
Consider the involution $\on{inv}\circ\theta$
on $\on{Res}_{F/F'}G_F$.
Since 
the characteristic of $F'$ is not equal to $2$,  
the fixed points 
$Y=(\on{Res}_{F/F'}G_F)^{\on{inv}\circ\theta}$
is an affine smooth $F'$-scheme and 
it follows from the definition that there is an isomorphism
\[L^\theta X\cong LY\]
between the loop symmetric space 
$L^\theta X$ and  the loop space 
of $Y$.
Now the desired claim follows from Theorem \ref{placid of loop spaces}. 
Part (i) follows.

Let $\nu\in\sN$ be represented by 
the standard spherical representative $t^\lambda g_0$.
Then we have  natural inclusions
\[L^\theta X_\nu\to (\overline{LG_\lambda})^{\on{inv}\circ\theta}\to L^\theta X=LG^{\on{inv}\circ\theta}\]
Note that the second map 
\[(\overline{LG_\lambda})^{\on{inv}\circ\theta}\to L^\theta X\]
is finitely presented 
since it is the base change of the 
finitely presented embedding
$\overline{LG_\lambda}=\overline{L^+Gt^\lambda L^+G}\to LG$
along the embedding $L^\theta X\to LG$.
Since $L^\theta X$ is ind-placid by (i), it follows that 
$(\overline{LG_\lambda})^{\on{inv}\circ\theta}$
is a plaicd scheme.
We claim that the first inclusion 
\begin{equation}\label{fp}
    L^\theta X_\nu\to (\overline{LG_\lambda})^{\on{inv}\circ\theta}
\end{equation}
is also finitely presented. 
Then
the Noetherian descent for fp-morphisms 
implies that 
there is a unique decomposition 
\[
L^\theta X_\nu\stackrel{j}\to \overline{L^\theta X_\nu}\stackrel{i}\to (\overline{LG_\lambda})^{\on{inv}\circ\theta}
\]
where $j$ is a placid fp-open embedding and $i$ is a placid fp-closed embedding 
(see, e.g., \cite[Lemma 1.3.6 and Lemma 1.3.11]{BKV}).
Part (ii) and (iii) for spherical orbits follow.
For the case of Iwahori orbits, it suffices to show that 
the inclusion $\cO_v\to L^\theta X_\nu$ 
from an Iwahori orbit to the corresponding spherical orbit  
is finitely presented.
This follows from the same proof as in \cite[Theorem 35]{CYUntwisted}. This finishes the proof of (ii) and (iii).

Proof of the claim. We have natural factorization
$L^\theta X_\nu\to (LG_\lambda)^{\on{inv}\circ\theta}\subset (\overline{LG_\lambda})^{\on{inv}\circ\theta}$
where the second inclusion is fp. Thus to show~\eqref{fp} is fp it suffices to check 
\begin{equation}\label{fp 2}
    L^\theta X_\nu\to (LG_\lambda)^{\on{inv}\circ\theta}
\end{equation}
is fp. 
Following \cite[Section 3.1.1]{CYMatsuki},
let  $L_\lambda=Z_G(\lambda)$ be the Levi centralizer of $\lambda$
and  $A_{\lambda,0}=\{g\in L_\lambda|g=w_2\Ad_{w_1^{-1}}\theta_0(g)^{-1}\lambda(-1)\}$
with twisted $L_\lambda$-conjugation action 
$h\cdot g=hg\Ad_{w_1^{-1}}\theta_0(h)^{-1}$.
It follows from \cite[Proposition 9]{CYMatsuki} that there are Cartesian diagrams
\[
\xymatrix{L^\theta X_\nu\ar[r]\ar[d]&L^+G\backslash L^\theta X_\nu\ar[r]\ar[d]&L_\lambda\backslash\cO_\nu\ar[d]^i\\
(LG_\lambda)^{\on{inv}\circ\theta}\ar[r]&L^+G\backslash (LG_\lambda)^{\on{inv}\circ\theta}\ar[r]^{\ \ \ f}&L_\lambda\backslash\ A_{\lambda,0}}
\]
where $i:\cO_\nu\to A_{\lambda,0}$ is an embedding of 
an $L_\lambda$-orbit and the map $f$ is induced by the 
evaluation map $L^+G\to G, \gamma(t)\to \gamma(0)$:
\[f:L^+G\backslash (LG_\lambda)^{\on{inv}\circ\theta}\cong 
L^+G\cap\Ad_{t^\lambda}L^+G\backslash (t^\lambda L^+G)^{\on{inv}\circ\theta}\to L_\lambda\backslash A_{\lambda,0},\ \ t^\lambda\gamma(t)\to \gamma(0).\]
Since $i$ is fp, it follows that~\eqref{fp 2} is fp.
The claim follows.
\end{proof}

\begin{rem}
The argument in Theorem \ref{Placidness of orbits} works equally well in the untwisted setting $LX$ and hence provides an alternative proof of \cite[Theorem 35]{CYUntwisted}. That being said, the methods of \emph{loc. cit.} have the advantage of being applicable to general spherical varieties, see \cite[Remark 37]{CYUntwisted}.
\end{rem}

\subsection{Filtered structures on orbits}
Consider the partial order on the sets of 
spherical orbits $\sN$ and Iwahori orbits $\sV$
on $L^\theta X$ by inclusion of orbit closures.

\begin{lem}\label{filtered}
(i) On each connected component of $L^\theta X$, the partial order on the spherical orbits is filtered.

(ii) On each connected component of $L^\theta X$, the partial order on the Iwahori orbits is filtered.
\end{lem}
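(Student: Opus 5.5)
The plan is to show that any two spherical orbits $L^\theta X_\nu$ and $L^\theta X_{\nu'}$ lying in a common connected component of $L^\theta X$ are contained in the closure of a single third spherical orbit. Part (ii) will then be deduced from (i).

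\textbf{Part (i).} I would use the embedding $L^\theta X\subset LG$ and the filtration of $LG$ by the closed $L^+G\times L^+G$-stable subschemes $\overline{LG_\mu}=\overline{L^+Gt^\mu L^+G}$. This filtration is filtered on each connected component of $LG$ (equivalently of $\Gr$).
\begin{itemize}
\item Given $\nu,\nu'$ in one component of $L^\theta X$, with standard representatives $t^\lambda g_0w_1^{-1}$ and $t^{\lambda'}g_0'w_1^{-1}$, I choose a dominant $\mu$ with $\overline{LG_\lambda},\overline{LG_{\lambda'}}\subset\overline{LG_\mu}$.
\item I take $\mu$ of the form $-w_1^{-1}\theta_0(\mu)=\mu$, i.e.\ $\theta$-compatible, and large, for instance by adding to $\lambda$ a large multiple of $\rho$-type coweights symmetrized under $-w_1^{-1}\theta_0$.
\item The scheme $Z_\mu=(\overline{LG_\mu})^{\on{inv}\circ\theta}$ is then a placid scheme, as in the proof of Theorem \ref{Placidness of orbits}. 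It is $L^+G$-stable and is a union of finitely many spherical orbits, by the Cartesian diagram there and the finiteness in Proposition \ref{p:spherical orbits}.
\end{itemize}

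The key claim is that $Z_\mu\cap C$ is irreducible for each connected component $C$ of $L^\theta X$, after enlarging $\mu$ if necessary. Its generic orbit is then the required upper bound.

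To prove irreducibility I would use the isomorphism $L^\theta X\cong LY$ together with the map $\tau$ of \eqref{eq:tau}. Each component of $L^\theta X$ is the image under $\tau$ of a component of $LG$, up to the finitely many $LG$-orbits. Alternatively, I would argue via the fibration $f$ over $L_\mu\backslash A_{\mu,0}$: the open part $(LG_\mu)^{\on{inv}\circ\theta}$ is a union of orbits indexed by $L_\mu$-orbits on $A_{\mu,0}$, and by choosing $\mu$ regular ($L_\mu=T$) the relevant quotient becomes a torus-type space whose orbits of maximal dimension lying in a fixed component of $L^\theta X$ should be unique.

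\textbf{Main obstacle.} The hardest step is this irreducibility, i.e.\ showing that within a component there is a unique maximal orbit in $Z_\mu$. If it fails directly, I would instead use a dimension argument. Take the orbit $\nu''$ with $\lambda''=\mu$ and $g_0''$ chosen so that the component is right. Then show, by a degeneration $t^{\mu}\rightsquigarrow t^\lambda$ along a one-parameter family inside $L^\theta X$ built from root subgroups $U_\alpha$ for $\theta$-stable pairs of roots, that $L^\theta X_\nu\subset\overline{L^\theta X_{\nu''}}$. This is in the same spirit as the proof that $t^\lambda\in\overline{L^+Gt^\mu L^+G}$ for $\lambda\le\mu$.

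\textbf{Part (ii).} Each Iwahori orbit $\cO_v$ lies in a unique spherical orbit $L^\theta X_\nu$. Since $L^\theta X_\nu$ is $L^+G$-stable and $L^+G/I_0$ is the irreducible flag variety, $L^\theta X_\nu$ is the union of finitely many Iwahori orbits, with a unique open dense one $\cO_{v_\nu}$. This uses Theorem \ref{Placidness of orbits}(ii): $L^\theta X_\nu$ is irreducible, being an orbit of a connected group. Hence every $\cO_v\subset L^\theta X_\nu$ satisfies $v\le v_\nu$, and $v_\nu\le v_{\nu'}$ whenever $\nu\le\nu'$. Given $v,v'$ in one component with spherical orbits $\nu,\nu'$, part (i) gives $\nu''\ge\nu,\nu'$, so $v_{\nu''}$ bounds both $v$ and $v'$.
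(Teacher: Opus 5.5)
Your part (ii) is fine and matches the paper's reduction: each irreducible $L^\theta X_\nu$ is a finite union of $I_0$-orbits, so it has a unique dense one $\cO_{v_\nu}$, and $v_{\nu''}$ bounds everything. The finiteness comes from \cite{CYMatsuki}, not from irreducibility of $L^+G/I_0$.

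The gap is in part (i). Everything rests on the claim that $Z_\mu\cap C$ is irreducible for suitable large $\mu$, and none of your routes proves it. Fixed loci of an involution on an irreducible scheme are typically reducible, and this already happens here. For $G=\SL_2$, $\theta_0=\on{id}$, the space $L^\theta X$ is connected since $\pi_0(LG)=1$. Yet $Z_0=L^+G\cap L^\theta X$ splits according to $\gamma(0)=\pm1$ into the two closed orbits. You give no reason why enlarging $\mu$ cures this. Analysing the top stratum $(LG_\mu)^{\on{inv}\circ\theta}$ via $f$ and $A_{\mu,0}$ cannot settle it, because that stratum need not be dense in $Z_\mu$; it can even be empty. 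In the same example, Proposition \ref{p:spherical orbits} gives $A_{\mu,0}=\emptyset$ for $\mu=(m,-m)$ with $m>0$ even, since the defining equation forces $\mu(-1)=-1$. So ``the orbit with $\lambda''=\mu$'' need not exist. Deciding whether the lower orbits of $Z_\mu\cap C$ lie in the closure of a single orbit is precisely the statement to be proved. The root-subgroup degeneration is only a heuristic, and it would also have to connect different $L_\lambda$-orbits on $A_{\lambda,0}$, not just change $\lambda$.

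The missing idea is to get irreducibility from a twisted product rather than from a fixed-point locus. $LG$ acts transitively on $L^\theta X$ (via $\tau$ in \eqref{eq:tau}), and $\pi_0(LG)\to\pi_0(L^\theta X)$ is onto. So one can pick $\gamma$ in the identity component of $LG$ with $\gamma\cdot L^\theta X_\nu\cap L^\theta X_{\nu'}\neq\emptyset$. Then $\gamma\in LG_\lambda$ with $\lambda$ in the neutral component, so $L^+G\subset\overline{LG_\lambda}$. Now consider the action map $a:\overline{LG_\lambda}\times^{L^+G}\overline{L^\theta X_\nu}\to L^\theta X$. Its source is irreducible, and it is proper: it is the projection of a closed subscheme of $(\overline{LG_\lambda}/L^+G)\times L^\theta X$, and $\overline{LG_\lambda}/L^+G$ is projective. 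Hence its image is closed, irreducible, $L^+G$-stable and contains finitely many orbits, so it equals some $\overline{L^\theta X_{\nu''}}$. The image contains $L^\theta X_\nu$ because $1\in\overline{LG_\lambda}$. It meets, hence contains, $L^\theta X_{\nu'}$ because $\gamma\in\overline{LG_\lambda}$. This is the paper's argument, and it never requires analysing $(\overline{LG_\mu})^{\on{inv}\circ\theta}$.
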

\begin{proof}
Let $L^\theta X_\nu$ and $L^\theta X_{\nu'}$
be two spherical orbits in the same component of $L^\theta X$.
Since $\tau:LG\to L^\theta X$ induces a surjection 
$LG(k)\to LG(k)/LG(k)^\theta\cong L^\theta (X)(k)$ on $k$-points, we have a surjection on the component groups
\[
\pi_0(LG)\to \pi_0(L^\theta X)\to 1.
\]
It follows that we can find an element 
$\gamma$ in the identity component of $LG$
such that $\gamma\cdot L^\theta X_\nu\cap L^\theta X_{\nu'}$ is non-empty.
We have $\gamma\in LG_\lambda$ for some dominant co-weight $\lambda$. Since $\lambda$ is in the identity component of $LG$, we have 
$L^+G=LG_0\subset\overline{LG_\lambda}$.
Consider the Hecke action
\[a:\overline{LG_\lambda}\times^{L^+G}\overline{L^\theta X_\nu}\to L^\theta X.\]
Since $\overline{LG_\lambda}$ contains both 
$\gamma$ and $L^+G$,
it follows that $L^\theta X_\nu,L^\theta X_{\nu'}\subset\on{Im}(a)$.
On the other hand, since $\on{Im}(a)$ is a proper
closed $L^+G$-invariant irreducible subset of $L^\theta X$, it contains only finitely many $L^+G$-orbits and we conclude that 
\[\on{Im}(a)=\overline{L^\theta X_{\nu''}}\]
for some $\nu''$. Part (i) follows.

By \cite{CYMatsuki}, each spherical orbit  contains only finitely many Iwahori orbits and we can follow the same argument as in \cite[Lemma 38]{CYUntwisted} to conclude part (ii).
\end{proof}

Recall the notion of locally equidimensional  placid presentations of ind-schemes in \cite[Appendix A]{CYUntwisted}.
Consider the partially ordered sets  $\underline\sV=\{\underline v=\{v_\beta\}_{\beta\in\pi_0(L^\theta X)}|\cO_{v_\beta}\in (L^\theta X)_\beta\}$
and $\underline{\sN}=\{\underline\nu=\{\nu_\beta\}_{\beta\in\pi_0(L^\theta X)}|L^\theta X_{\nu_\beta}\in (L^\theta X)_\beta\}$
such that
$\underline v'\leq\underline v$
(resp. $\underline\nu'\leq\underline\nu$)
if and only if $v'_\beta\leq v_\beta$
(resp. $\nu'_\beta\leq \nu_\beta)$ for all $\beta$.
For any $\underline v\in\underline\sV$ and $\underline\nu\in\underline{\sN}$, we set 
$\overline{\cO}_{\underline\nu}=\bigsqcup
\overline{\cO}_{\nu_\beta}$
and $\overline{L^\theta X}_{\underline\nu}=\bigsqcup
\overline{L^\theta X}_{\nu_\beta}$.

Combining Theorem \ref{Placidness of orbits} and Lemma \ref{filtered}, we obtain:

  \begin{prop}\label{equ dim placid}
      (i)  $L^\theta X\cong\on{colim}_{\underline\nu}\overline{L^\theta X_{\underline\nu}}$ is a locally equidimensional $L^+G$-placid presentation.

      (ii)  $L^\theta X\cong\on{colim}_{\underline v}\overline{\cO_{\underline v}}$ is a locally equidimensional $I_0$-placid presentation.
  \end{prop}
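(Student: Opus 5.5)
The plan is to check the axioms of a locally equidimensional placid presentation from \cite[Appendix A]{CYUntwisted} one at a time. The index sets $\underline{\sN}$ and $\underline{\sV}$ are products over $\pi_0(L^\theta X)$ of the orbit posets on each component. By Lemma \ref{filtered}, each factor poset is filtered, and a product of filtered posets is again filtered. So both $\underline{\sN}$ and $\underline{\sV}$ are filtered.

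Next we show the colimit exhausts $L^\theta X$. On each component, $(L^\theta X)_\beta$ is the union of its $L^+G$-orbits, and also the union of its $I_0$-orbits (Propositions \ref{p:spherical orbits} and \ref{p:Iwahori orbits}). By filteredness, any finite collection of orbit closures is contained in a single one. We will also use that every closed subscheme of finite type of the ind-scheme $L^\theta X$, which is placid by Theorem \ref{Placidness of orbits}(i), meets only finitely many orbits. Together these give $L^\theta X\cong\on{colim}\overline{L^\theta X}_{\underline\nu}$ and $L^\theta X\cong\on{colim}\overline{\cO}_{\underline v}$ as ind-schemes. The cofinality of orbit closures among closed subschemes comes from the filtration $\overline{LG_\lambda}^{\on{inv}\circ\theta}$ used in the proof of Theorem \ref{Placidness of orbits}: each such piece is $L^+G$-stable and contains only finitely many orbits, so it lies in a single orbit closure by Lemma \ref{filtered}.

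For the placid conditions: Theorem \ref{Placidness of orbits}(ii) says each $\overline{L^\theta X}_{\underline\nu}$ and each $\overline{\cO}_{\underline v}$ is a placid scheme. Theorem \ref{Placidness of orbits}(iii) says the transition maps are fp closed embeddings. Each orbit closure is stable under $L^+G$ (resp. $I_0$). To get placidness of the action, we write $L^+G$ and $I_0$ as limits of smooth affine groups, $L^+G=\lim L^+G/L^+G_n$ with $L^+G_n$ the congruence subgroups. Each orbit closure is then the limit of its quotients by $L^+G_n$ (resp. $I_{0,n}$) for $n$ large, and these quotients are of finite type with smooth affine transition maps. This should follow the untwisted argument verbatim. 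It uses that the stabilizers in $L^+G$ of points of a fixed orbit closure contain a congruence subgroup after twisting by $\theta$, and this holds because $\theta$ preserves each $L^+G_n$.

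The main obstacle is local equidimensionality. For each $\underline\nu$, the connected components of $\overline{L^\theta X}_{\underline\nu}$ are the irreducible schemes $\overline{L^\theta X}_{\nu_\beta}$ (irreducible by Theorem \ref{Placidness of orbits}(ii)). So it suffices to show that each of them admits a presentation $\lim Y_n$ with $Y_n$ equidimensional of finite type and smooth transition maps of constant relative dimension. I would take $Y_n=\overline{L^\theta X}_{\nu}/L^+G_n$ for $n\gg0$. Irreducibility of the source makes each $Y_n$ irreducible, hence equidimensional. The transition maps $Y_{n+1}\to Y_n$ are $L^+G_n/L^+G_{n+1}$-torsors for $n$ large, so they are smooth of constant relative dimension; verifying this free-action claim is exactly the congruence-stabilizer statement above. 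The Iwahori case is identical with $I_0$ in place of $L^+G$, again using irreducibility of $\overline{\cO}_v$.
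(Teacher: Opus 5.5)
There is a genuine gap. Your filteredness and exhaustion steps are fine. One slip: the closed pieces of a placid ind-scheme are not of finite type. What you actually use, correctly, is that each $(\overline{LG_\lambda})^{\on{inv}\circ\theta}$ is $L^+G$-stable and meets only finitely many $L^+G$- and $I_0$-orbits.

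The gap is in your last two paragraphs. Both rest on the claim that stabilizers for the $L^+G$-action on $L^\theta X$ contain a congruence subgroup. From this you conclude that $L^+G_n$ acts freely, so that $Y_{n+1}\to Y_n$ is an $L^+G_n/L^+G_{n+1}$-torsor. This is false. The action is $\theta$-twisted conjugation, so $\on{Stab}_{L^+G}(\gamma)=(L^+G)^{\psi_\gamma}$ with $\psi_\gamma=\Ad_\gamma\circ\theta$, the fixed points of an involution. It never contains $L^+G_n$: on $t^jX$ the differential of $\psi_\gamma$ is $(-1)^j t^j\Ad_\gamma\theta_0(X)$, whose sign alternates in $j$. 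That $\theta$ preserves $L^+G_n$ is irrelevant here.

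Already for $G=\bGm$, $\theta_0=\mathrm{id}$ the claim fails. The stabilizer of $1$ is $k[[t^2]]^\times$, and $1+t^{2m+1}$ moves $1$. The action of $L^+G_n$ is not free, since $1+t^{2m}$ with $2m\geq n$ fixes $1$. On the orbit of $1$ one gets $Y_n=(\cO/t^n)^\times/\mathrm{im}(k[[t^2]]^\times)$, of dimension $\lfloor n/2\rfloor$. So $Y_{n+1}\to Y_n$ has relative dimension alternating between $0$ and $1$, and is not a $\mathbb{G}_a$-torsor. For a general orbit closure you have no argument that $\overline{L^\theta X}_\nu/L^+G_n$ is even a finite-type scheme. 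So neither your ``placidness of the action'' nor your local equidimensionality is established.

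None of this machinery is needed; the paper gets the statement directly by combining Theorem~\ref{Placidness of orbits} with Lemma~\ref{filtered}:
\begin{itemize}
\item The pieces $\overline{L^\theta X}_{\underline\nu}$ and $\overline{\cO}_{\underline v}$ are tautologically $L^+G$- (resp.\ $I_0$-)stable.
\item Their components $\overline{L^\theta X}_{\nu_\beta}$ (resp.\ $\overline{\cO}_{v_\beta}$) are irreducible placid schemes. The placid structure comes from Drinfeld's theorem via $L^\theta X\cong LY$ and the fp embeddings of part (iii), not from quotients by congruence subgroups.
\item Irreducibility of each component is what gives local equidimensionality.
\item The lemma supplies the filtered index set.
\end{itemize}
Replace your last paragraph by this deduction.
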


\subsection{Dimension theory for $L^\theta X$}
Following  \cite[Appendix A]{CYUntwisted},
a dimension theory for a placid ind-scheme  $Y$
is an assignment  
to  each locally equidimensional placid subscheme 
$S\subset Y$
a locally constant function
$\delta_{S}:S\to\mathbb Z$
such that for any finitely presented locally closed 
embedding $S\subset S'\subset Y$ we have 
\begin{equation}\label{dim_S/S'}
\delta_{S}-\delta_{S'}|_{S}=\on{dim}_{S/S'}:S\to\mathbb Z,
\end{equation}
here
$\dim_{S/S'}:S\to\mathbb Z$ 
is the dimension function associated to 
the inclusion $S\subset S'$.

\begin{lem}\label{dim}
    Assume $Y$ admits a locally equidimensional placid presentation 
    $Y\cong\on{colim}_{i\in I} Y^i$.
    Then there exists a dimension theory $\delta$ on $Y$.
\end{lem}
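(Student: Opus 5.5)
We plan to build $\delta$ by first fixing a base point in each term of the presentation and then extending to arbitrary subschemes through the dimension functions $\dim_{S/S'}$. Concretely, for each $i\in I$, $Y^i$ is a locally equidimensional placid scheme. Since the presentation is filtered, we can choose normalizations $\delta_{Y^i}:Y^i\to\mathbb Z$ that are locally constant and compatible. To do this, pick an index $i_0$ and set $\delta_{Y^{i_0}}=0$. For $i\geq i_0$ the transition map $Y^{i_0}\to Y^i$ is a finitely presented closed embedding, so we set $\delta_{Y^i}|_{Y^{i_0}}=\dim_{Y^{i_0}/Y^i}$. This value is locally constant on $Y^{i_0}$, and by local equidimensionality of $Y^i$ it extends uniquely to a locally constant function on each connected component of $Y^i$ that meets $Y^{i_0}$. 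On components not meeting $Y^{i_0}$, we choose values arbitrarily in the first index where the component appears and propagate upward.

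Compatibility between $\delta_{Y^i}$ and $\delta_{Y^j}$ for $i\leq j$ then amounts to
\[\delta_{Y^i}-\delta_{Y^j}|_{Y^i}=\dim_{Y^i/Y^j}.\]
We derive this from the additivity $\dim_{S/S''}=\dim_{S/S'}+\dim_{S'/S''}|_S$ for chains of fp embeddings, which is the basic property of relative dimension in \cite[Appendix A]{CYUntwisted}. Because $I$ is filtered, any two choices are dominated by a common index, so the normalizations are consistent.

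For a general locally equidimensional placid subscheme $S\subset Y$, we reduce to the presentation. By placidness and quasi-compactness, each connected piece of $S$ lands in some $Y^i$ with $S\to Y^i$ a finitely presented locally closed embedding, which is a standard consequence of the presentation being placid. We then define
\[\delta_S=\delta_{Y^i}|_S+\dim_{S/Y^i}.\]
Independence of $i$ follows from the compatibility above together with additivity along $S\subset Y^i\subset Y^j$. The defining identity~\eqref{dim_S/S'} for $S\subset S'$ also follows from additivity along $S\subset S'\subset Y^i$.

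The main obstacle is justifying that an arbitrary such $S$ (or each quasi-compact open piece of it) factors through some $Y^i$ via an fp locally closed embedding, and that $\dim_{S/S'}$ is locally constant and additive in the placid, non-finite-type setting. For the first point we would try to use that placid schemes are limits of finite type schemes along smooth affine maps, so a finitely presented embedding descends to a finite stage. For additivity we would check it Noetherian-locally, using a common smooth presentation, and there it reduces to codimension additivity for locally equidimensional finite type schemes.
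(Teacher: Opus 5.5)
Your strategy (normalize at a base index $i_0$, then propagate using filteredness and additivity of relative dimension) is the one the paper uses. However, two steps fail as you wrote them.

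\textbf{The construction on the terms $Y^i$.} First, a sign: with $\delta_{Y^{i_0}}=0$, the required identity $\delta_{Y^{i_0}}-\delta_{Y^i}|_{Y^{i_0}}=\dim_{Y^{i_0}/Y^i}$ forces $\delta_{Y^i}|_{Y^{i_0}}=-\dim_{Y^{i_0}/Y^i}$, not $+\dim_{Y^{i_0}/Y^i}$.

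More seriously, you cannot choose values arbitrarily on components of $Y^i$ that miss $Y^{i_0}$. Such a component can later lie in the same connected component of a larger $Y^j$ as $Y^{i_0}$, and then its value is forced. For example, take $Y^0=\{0\}$, $Y^1=\{0\}\sqcup\{1\}$, $Y^n=\mathbb{A}^1$ for $n\geq 2$, and $i_0=0$. Compatibility of both points with $Y^2=\mathbb{A}^1$, on which $\delta$ must be constant, forces $\delta_{Y^1}(1)=\delta_{Y^0}(0)=0$. Any other choice violates the identity for $\{1\}\subset Y^2$. Two further problems: in a filtered but not totally ordered $I$ there is no ``first index'', and you never actually define $\delta_{Y^i}$ for $i\not\geq i_0$.

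The missing step is exactly the paper's formula. For every $i$, choose $i'\geq i,i_0$ and set $\delta_{Y^i}=\dim_{Y^i/Y^{i'}}-\dim_{Y^{i_0}/Y^{i'}}$. Additivity along $Y^i\subset Y^{i'}\subset Y^j$ shows this is independent of $i'$ and satisfies the compatibility. So the only freedom is one constant per connected component of $Y$, not one per component of each $Y^i$.

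Also, making sense of this formula (equivalently, your extension of $-\dim_{Y^{i_0}/Y^i}$ to whole components $C$ of $Y^i$) requires the relative dimension to be constant on $Y^{i_0}\cap C$. Local equidimensionality does not give this: take a point and a disjoint line inside $\mathbb{A}^2$. It does hold in the paper's application, because each term meets each connected component of $L^\theta X$ in a single irreducible orbit closure; the paper's formula uses this tacitly.

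\textbf{The reduction for general $S$.} You claim that every locally equidimensional placid $S\subset Y$ is, piecewise, a finitely presented locally closed subscheme of some $Y^i$. This is false. A closed point of an infinite-dimensional term such as $L^+G$ is placid but not finitely presented in it, so $\dim_{S/Y^i}$ is undefined and your formula for $\delta_S$ does not apply. The paper avoids this step entirely by invoking \cite[A.4 (11)]{CYUntwisted}, which reduces a dimension theory on $Y$ to compatible functions $\delta_{Y^i}$ on the terms of the presentation. You should use that reduction instead of the descent argument in your last paragraph.
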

\begin{proof}
By \cite[A.4 (11)]{CYUntwisted},
it suffices to define $\delta_{Y^i}:Y^i\to\mathbb Z$ for each 
$Y^i$ satisfying~\eqref{dim_S/S'}.
For this, we fix an $i_0\in I$, 
and for $Y^i$ we choose a large enough orbit $i'\in I$ such that $i,i_0\leq i'$, and we define
\[
\delta_{Y^i}=\dim_{Y^{i}/Y^{i'}}-\dim_{Y^{i_0}/Y^{i'}}:Y^i\to\bZ.
\]
This is independent of the choice of $i'\in I$
and satisfies ~\eqref{dim_S/S'}, 
since if $j\in I$ such that $i_0,i,i'\leq j$, we have 
\begin{align*}
\dim_{Y^i/Y^{j}}-\dim_{Y^{i_0}/Y^j}
&=(\dim_{Y^i/Y^{i'}}+\dim_{Y^{i'}/Y^{j}})-(\dim_{Y^{i_0}/Y^{i'}}+\dim_{Y^{i'}/Y^j})\\
&=\dim_{Y^{i'}/Y^{i'}}-\dim_{Y^{i_0}/Y^i}
\end{align*}
and 
\[
\delta_{Y^i}-\delta_{Y^{i'}}|_{Y^i}
=(\dim_{Y^{i}/Y^{j}}-\dim_{Y^{i_0}/Y^j})-(\dim_{Y^{i'}/Y^{j}}-\dim_{Y^{i_0}/Y^{j}})
=\dim_{Y^{i}/Y^{i'}}.
\]
The proposition follows.
\end{proof}

\begin{prop}\label{p:dim theory}
    There is a dimension theory $\delta$
    on $L^\theta X$.
\end{prop}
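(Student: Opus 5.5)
The plan is to deduce the existence of a dimension theory directly from the two preceding results. By Proposition \ref{equ dim placid}(i), $L^\theta X\cong\on{colim}_{\underline\nu}\overline{L^\theta X}_{\underline\nu}$ is a locally equidimensional ($L^+G$-)placid presentation. Its index set $\underline\sN$ is filtered: this follows componentwise from Lemma \ref{filtered}(i), since upper bounds can be chosen independently on each connected component $\beta\in\pi_0(L^\theta X)$. Lemma \ref{dim} then applies with $Y=L^\theta X$ and $Y^i=\overline{L^\theta X}_{\underline\nu}$, and produces $\delta$. Alternatively, one can use the Iwahori presentation of Proposition \ref{equ dim placid}(ii). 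I would also check that the two resulting theories differ only by a locally constant shift, though this is not needed for the existence statement.

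Concretely, I would fix $\underline\nu_0\in\underline\sN$. For each $\underline\nu$, I would choose $\underline\nu'\geq\underline\nu,\underline\nu_0$ and set
\[\delta_{\overline{L^\theta X}_{\underline\nu}}=\dim_{\overline{L^\theta X}_{\underline\nu}/\overline{L^\theta X}_{\underline\nu'}}-\dim_{\overline{L^\theta X}_{\underline\nu_0}/\overline{L^\theta X}_{\underline\nu'}}.\]
These relative dimension functions make sense for the following reasons. The closed embeddings $\overline{L^\theta X}_{\underline\nu}\to\overline{L^\theta X}_{\underline\nu'}$ are finitely presented by Theorem \ref{Placidness of orbits}(iii). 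The source and target are placid by Theorem \ref{Placidness of orbits}(ii) and locally equidimensional. This is where the local equidimensionality of the presentation is used, and it is also where the choice of $\underline\nu_0$ on each component matters, because the pieces live on different components and additivity of $\dim_{S/S'}$ is checked component by component.

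I would then invoke \cite[A.4 (11)]{CYUntwisted}. It says that a compatible family $\delta_{Y^i}$ on the terms of a locally equidimensional placid presentation extends uniquely to all locally equidimensional placid subschemes $S\subset L^\theta X$, by passing through some $Y^i$ containing $S$ via a finitely presented embedding. I take this extension as given.

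The main point to verify is the compatibility \eqref{dim_S/S'} for the defined family, together with independence of the auxiliary choice $\underline\nu'$. Both follow from the additivity $\dim_{S/S''}=\dim_{S/S'}+\dim_{S'/S''}|_S$ for chains of fp closed embeddings of placid schemes, as in the proof of Lemma \ref{dim}. I therefore expect no real obstacle beyond confirming that every inclusion in play is finitely presented, and Theorem \ref{Placidness of orbits}(iii) guarantees this.
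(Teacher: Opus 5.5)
Your proposal is correct and follows the paper's own proof. The paper deduces the statement in one line by combining Proposition \ref{equ dim placid} (whose locally equidimensional placid presentation is indexed by a poset that is filtered by Lemma \ref{filtered}) with Lemma \ref{dim}. Your unpacking of the construction in Lemma \ref{dim}, including the componentwise choice of $\underline\nu_0$ and the additivity check, is exactly that argument made explicit.
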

\begin{proof}
    This follows from Proposition \ref{equ dim placid} and Lemma \ref{dim}.
\end{proof}

\begin{rem}
In \cite[Proposition 41]{CYUntwisted}, we deduce the existence of a dimension theory for the untwisted loop space $LX$ from the fact that closed Iwahori orbits lying in the same connected component of $LX$ have the same relative dimension; see \cite[Proposition 40]{CYUntwisted}. 
We do not know whether the analogous property holds for $L^\theta X$. 
Lemma~\ref{filtered} provides an alternative argument for the existence of a dimension theory that applies uniformly to both the untwisted and twisted settings.
\end{rem}

\section{Transversal slices}\label{Transversal slices}
We construct transversal slices of 
spherical and Iwahori obits on $L^\theta X$ 
and establish some basic properties of them.
The construction generalizes the one for the untwisted case
\cite[\S6]{CYUntwisted}.

\subsection{Slices for spherical orbits}
\subsubsection{Involution on affine Grassmannian slices}
Let $L^-G=G[t^{-1}]$, $L^{<0}G=\ker(L^-G\rightarrow G)$, and $L^-\fg,L^{<0}\fg$ their Lie algebras. 
For any dominant coweight $\lambda\in\Lambda_{T_0}^+$, 
we write $LG_\lambda=L^+Gt^\lambda L^+G$
for the $L^+G\times L^+G$-orbit of $t^\lambda$ in $LG$,
and $\overline {LG_\lambda}$ its closure.
The affine Grassmannian slice of $LG_\lambda$ in $LG$ is defined as 
$W^\lambda=(L^{<0}G\cap \Ad_{t^\lambda}L^{<0}G)t^\lambda$.
For any pair $\lambda,\mu\in\Lambda_{T_0}^+$, define
$W^\lambda_\mu=W^\lambda\cap LG_\mu$ and $W^\lambda_{\leq\mu}=W^\lambda\cap \overline{LG_\mu}$.
Some well-known facts on these slices that we need are 
listed in \cite[Lemma 42]{CYUntwisted}.

Recall from Proposition \ref{p:spherical orbits} that
$L^+G$-orbits on $L^\theta X$ 
are represented by standard spherical representatives
$x=t^\lambda g_x$, $g_x\in G$.
Let $y=t^\mu g_y$ be another such element.
Denote
\begin{align*}
&W^x:=(L^{<0}G\cap\Ad_xL^{<0}G)x=W^\lambda g_x,\\
&W^x_y:=W^x\cap L^+GyL^+G=W^\lambda g_x\cap LG_\mu=W^\lambda_\mu g_x,\\
&W^x_{\leq y}:=W^x\cap\overline{L^+GyL^+G}=W^\lambda_{\leq\mu}g_x.
\end{align*}

\begin{lem}\label{l:inv on spherical slices}
Let $x=t^\lambda g_x,y=t^\mu g_y\in(t^\lambda G)^{\inv\circ\theta}$.
The 
map $\on{inv}\circ\theta:LG\to LG$
restricts to involutions on $W^x$, $W^x_y$, and $W^x_{\leq y}$.
Moreover, the identification 
$W^x\cong L^{<0}G\cap \Ad_{t^\lambda}L^{<0}G$, sending $\gamma x\to \gamma$, intertwines the involution $\on{inv}\circ\theta|_{W^x}$
with the involution $\on{inv}\circ\psi_x$
on $L^{<0}G\cap \Ad_{t^\lambda}L^{<0}G$
where $\psi_x=\on{Ad}_x\circ\theta$.
\end{lem}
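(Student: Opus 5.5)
The plan is to compute directly with the twisted involution $\sigma:=\on{inv}\circ\theta$, $\sigma(\gamma)=\theta(\gamma)^{-1}$. Two features of $\theta$ do most of the work. First, $\theta(\gamma)(t)=\theta_0(\gamma(-t))$ preserves $L^+G$, $L^-G$ and $L^{<0}G$, since substituting $t\mapsto -t$ preserves $k[[t]]$, $k[t^{-1}]$ and the kernel of evaluation at $t^{-1}=0$, and $\theta_0$ is a group automorphism. Second, $x$ is a fixed point of $\sigma$, i.e.\ $x\theta(x)=1$, so $\theta(x)=x^{-1}$.

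\textbf{Step 1: $\sigma$ preserves $W^x$, and the intertwining statement.} Write an element of $W^x$ as $\gamma x$ with $\gamma\in L^{<0}G\cap\Ad_xL^{<0}G$. Then
\[
\sigma(\gamma x)=\theta(x)^{-1}\theta(\gamma)^{-1}=x\,\theta(\gamma)^{-1}=\bigl(x\theta(\gamma)^{-1}x^{-1}\bigr)x=\bigl(\on{inv}\circ\psi_x\bigr)(\gamma)\,x .
\]
So it suffices to show that $\on{inv}\circ\psi_x$ preserves $L^{<0}G\cap\Ad_xL^{<0}G$. Here $\theta(L^{<0}G)=L^{<0}G$. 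Moreover
\[
\theta(\Ad_xL^{<0}G)=\Ad_{\theta(x)}\theta(L^{<0}G)=\Ad_{x^{-1}}L^{<0}G .
\]
Therefore
\[
\psi_x\bigl(L^{<0}G\cap\Ad_xL^{<0}G\bigr)=\Ad_x\bigl(L^{<0}G\cap\Ad_{x^{-1}}L^{<0}G\bigr)=\Ad_xL^{<0}G\cap L^{<0}G .
\]
Inversion preserves this subgroup, so $\sigma$ preserves $W^x$. The displayed computation is exactly the claimed intertwining under $\gamma x\mapsto\gamma$. Moreover $\sigma^2=\mathrm{id}$ because $\theta^2=\mathrm{id}$ and $\theta$ commutes with inversion, so the restriction is an involution.

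Note that $W^x=W^\lambda g_x$ and $L^{<0}G\cap\Ad_xL^{<0}G=L^{<0}G\cap\Ad_{t^\lambda}L^{<0}G$, because $g_x\in G$ normalizes $L^{<0}G$. This reconciles the statement's identification with the one above.

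\textbf{Step 2: $\sigma$ preserves $LG_\mu$ and $\overline{LG_\mu}$.} Since $L^+G$ is $\theta$-stable,
\[
\sigma(L^+GyL^+G)=L^+G\,\theta(y)^{-1}L^+G=L^+GyL^+G,
\]
using $\theta(y)^{-1}=y$ for $y\in(t^\mu G)^{\inv\circ\theta}$. Because $\sigma$ is an automorphism of the ind-scheme $LG$, it also preserves the closure. Intersecting with Step 1 shows that $\sigma$ restricts to involutions of
\[
W^x_y=W^x\cap L^+GyL^+G\quad\text{and}\quad W^x_{\leq y}=W^x\cap\overline{L^+GyL^+G}.
\]

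\textbf{Main obstacle.} The only delicate point is verifying that $\theta$ preserves $L^{<0}G$ as a sub-ind-scheme, and not just on $k$-points, so that all statements hold scheme-theoretically. For this I would check functorially on $R$-points: $\gamma\in G(R[t^{-1}])$ with $\gamma(\infty)=1$ is sent to $\theta_0(\gamma(-t))$, which still lies in $G(R[t^{-1}])$ and still takes the value $1$ at $t^{-1}=0$. The same check handles $L^+G$. The remaining steps are then formal.
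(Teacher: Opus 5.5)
Your proof is correct and follows the paper's argument: the paper uses the same computation $\sigma(\gamma x)=\theta(x)^{-1}\theta(\gamma)^{-1}=(\on{inv}\circ\psi_x)(\gamma)\,x$, based on $\theta(x)=x^{-1}$. You also make explicit two points the paper only asserts: why $\on{inv}\circ\psi_x$ preserves $L^{<0}G\cap\Ad_xL^{<0}G$, and why $L^+GyL^+G$ is $\sigma$-stable, which needs $y$ (not just $x$) to be $\sigma$-fixed.
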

\begin{proof}
Since $x$ is fixed by $\on{inv}\circ\theta$,
$LG_\lambda$ is stable under $\on{inv}\circ\theta$
and $L^{<0}G\cap \Ad_{t^\lambda}L^{<0}G$ is stable under 
$\on{inv}\circ\psi_x$.
Thus for any 
$\gamma x\in W^x=(L^{<0}G\cap \Ad_{t^\lambda}L^{<0}G)x$
(resp. $W^x_y$, $W^x_{\leq y}$), we have 
\[
\on{inv}\circ\theta(\gamma x)=\theta(x)^{-1}\theta(\gamma)^{-1}=(\Ad_x\theta(\gamma)^{-1})x=\on{inv}\circ\psi_x(\gamma) x\in
 W^x\ \    (resp.\ \  W^x_y, W^x_{\leq y}).
\]
The lemma follows.
\end{proof}

Now write $x=t^\lambda g_x=t^\lambda g_0w_1^{-1}\in L^\theta X$,
$g_0\in L_\lambda$ in the Levi subgroup,
$\theta_0(\lambda)=-w_1\lambda$.
Consider the $\bGm$-action 
\begin{equation}\label{eq:spherical contraction}
	\rho_x(s):LG\to LG,
	\gamma(t)\to s^\lambda\gamma(s^{-2}t)s^{w_1\lambda}.
\end{equation}

\begin{lem}\label{G_m action on Aff slices}
The two maps $\rho_x(s)$
and $\on{inv}\circ\theta$ commute with each other.
Moreover,
the  map $\rho_x(s)$ restricts to a $\mathbb G_m$-action $\rho_s$ on $W^x,W^x_y, W^x_{\leq y}$.
The $\mathbb G_m$-action $\rho_s$ on $W^x$ and $W^x_{\leq y}$
is contracting 
with unique fixed point $x$.
\end{lem}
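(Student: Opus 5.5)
The plan is to verify the three assertions by direct computation with the explicit formula \eqref{eq:spherical contraction}. The computations rest on two facts about the standard spherical representative $x=t^\lambda g_0w_1^{-1}$:
\begin{itemize}
\item[(a)] $\theta_0(\lambda)=-w_1\lambda$, by Proposition \ref{p:spherical orbits}.
\item[(b)] $w_2\lambda=\lambda$, where $w_2=\theta_0(w_1)w_1$. To see this, apply $\theta_0$ to $\lambda=-w_1^{-1}\theta_0(\lambda)$. This gives $\theta_0(\lambda)=-\theta_0(w_1)^{-1}\lambda$, and substituting back yields $\lambda=w_1^{-1}\theta_0(w_1)^{-1}\lambda$.
\end{itemize}
Together these give $\theta_0(w_1\lambda)=\Ad_{\theta_0(w_1)}\theta_0(\lambda)=-w_2\lambda=-\lambda$.

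\textbf{Commutation.} Since $\theta$ replaces $t$ by $-t$, and $s^{-2}(-t)=-(s^{-2}t)$, we get
\[
\theta(\rho_x(s)\gamma)(t)=s^{\theta_0(\lambda)}\,\theta(\gamma)(s^{-2}t)\,s^{\theta_0(w_1\lambda)}=s^{-w_1\lambda}\,\theta(\gamma)(s^{-2}t)\,s^{-\lambda}.
\]
Taking inverses returns $s^{\lambda}\,\theta(\gamma)^{-1}(s^{-2}t)\,s^{w_1\lambda}=\rho_x(s)(\on{inv}\circ\theta(\gamma))$. Hence $\rho_x(s)$ and $\on{inv}\circ\theta$ commute.

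\textbf{$x$ is fixed.} Rotation sends $t^\lambda$ to $s^{-2\lambda}t^\lambda$. Also $w_1^{-1}s^{w_1\lambda}=s^\lambda w_1^{-1}$, and $g_0\in L_\lambda$ commutes with $s^\lambda$. Therefore
\[
\rho_x(s)(x)=s^\lambda s^{-2\lambda}t^\lambda g_0 s^\lambda w_1^{-1}=x.
\]

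\textbf{Restriction to the slices.} For $\gamma\in L^{<0}G\cap\Ad_{t^\lambda}L^{<0}G$, the formula above gives $\rho_x(s)(\gamma x)=\bigl(\Ad_{s^\lambda}\gamma(s^{-2}t)\bigr)\,x$. So under the identification $W^x\cong L^{<0}G\cap\Ad_{t^\lambda}L^{<0}G$ of Lemma \ref{l:inv on spherical slices}, $\rho_s$ becomes $\gamma\mapsto\Ad_{s^\lambda}\gamma(s^{-2}t)$. This map preserves $L^{<0}G$. It also preserves $\Ad_{t^\lambda}L^{<0}G$, because rotation conjugates $t^\lambda$ only by the constant torus element $s^{-2\lambda}$. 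Hence $W^x$ is stable.

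The map $\rho_x(s)$ is a composition of loop rotation with left and right multiplication by constant torus elements. It therefore preserves every double coset $L^+Gt^\mu L^+G$ and its closure. Intersecting with $W^x$ shows that $W^x_y$ and $W^x_{\leq y}$ are stable.

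\textbf{Contraction.} This is the step needing the most care. I would compute weights on affine root subgroups. A root subgroup of type $\alpha t^{-n}$ lies in $L^{<0}G\cap\Ad_{t^\lambda}L^{<0}G$ exactly when $n\ge1$ and $n+\langle\alpha,\lambda\rangle\ge1$. On it $\rho_s$ acts by the character $s^{\langle\alpha,\lambda\rangle+2n}$. This exponent equals $n+(n+\langle\alpha,\lambda\rangle)\ge 2>0$. Torus directions $t^{-n}$, $n\geq 1$, get weight $s^{2n}$ with $2n>0$.

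The group $L^{<0}G\cap\Ad_{t^\lambda}L^{<0}G$ is a finite type unipotent group. I would give it $T_0$-stable coordinates identifying it with an affine space, namely a product of these root subgroups together with the torus-type pieces (or Lie algebra coordinates after a filtration argument). In these coordinates $\rho_s$ is linear with strictly positive weights. So $\lim_{s\to0}\rho_s(w)=x$ for every $w\in W^x$, and $x$ is the unique fixed point.

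Finally, $W^x_{\leq y}$ is closed and $\rho$-stable in $W^x$, so the same limits exist in it. Thus the action on $W^x_{\leq y}$ is contracting with unique fixed point $x$, which lies in $W^x_{\leq y}$ whenever the latter is nonempty.
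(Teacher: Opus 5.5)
Your overall route is the paper's: both rewrite $\rho_x(s)(\gamma x)=(\Ad_{s^\lambda}\gamma(s^{-2}t))\,x$ and read off stability and contraction from the map $\gamma\mapsto\Ad_{s^\lambda}\gamma(s^{-2}t)$. Your other checks are correct. These are: the commutation of $\rho_x(s)$ with $\on{inv}\circ\theta$ (via $w_2\lambda=\lambda$, hence $\theta_0(w_1\lambda)=-\lambda$), that $x$ is fixed, and that $W^x$, $W^x_y$, $W^x_{\leq y}$ are stable. The paper's proof does not verify the commutation, so that part is a useful addition.

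The contraction step, however, rests on a false claim. The group $L^{<0}G\cap\Ad_{t^\lambda}L^{<0}G$ is not of finite type. Your own list admits, for every root $\alpha$, all $n\geq\max(1,1-\langle\alpha,\lambda\rangle)$, and for $\lambda=0$ the group is all of $L^{<0}G$. It is an ind-scheme, as Lemma \ref{slice spherical}(i) reflects for $L^x$, and the coordinates you propose do not exist on it. First, $L^{<0}T_0$ is non-reduced with a single $k$-point. Second, $L^{<0}G$ is not a product of its root subgroups and $L^{<0}T_0$. For example, $\left(\begin{smallmatrix}1+t^{-1}&t^{-1}\\ -t^{-1}&1-t^{-1}\end{smallmatrix}\right)\in L^{<0}\SL_2$ admits no such factorization, since every ordering forces a diagonal entry to be a unit of $k[t^{-1}]$.

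The repair uses exactly your weight bookkeeping, but on matrix coefficients. Embed $G\hookrightarrow\GL_N$ with $T_0$ diagonal and $\lambda$ acting with weights $\lambda_1,\dots,\lambda_N$. For an $R$-point $\gamma$ of $L^{<0}G\cap\Ad_{t^\lambda}L^{<0}G$, the $(i,j)$ entry is $\delta_{ij}+\sum_n a_{n,ij}t^{-n}$, with $a_{n,ij}\neq0$ only if $n\geq1$ and $n+\lambda_i-\lambda_j\geq1$. The corresponding entry of $\Ad_{s^\lambda}\gamma(s^{-2}t)$ is $\delta_{ij}+\sum_n s^{2n+\lambda_i-\lambda_j}a_{n,ij}t^{-n}$, with every exponent $\geq2$. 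So the map extends over $s=0$ with value $1$, functorially in $R$; it stays in the closed sub-ind-scheme $L^{<0}G\cap\Ad_{t^\lambda}L^{<0}G$ by density of $\bGm$ in $\bA^1$. This gives contraction to $x$ and uniqueness of the fixed point, and it is the precise content of the paper's one-line assertion that $\Ad_{s^\lambda}\gamma(s^{-2}t)\to1$. The statement for the closed, stable $W^x_{\leq y}$ then follows as you say.
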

\begin{proof}
First, for any $\gamma(t)x\in W^x$ 
where $\gamma(t)\in L^{<0}G\cap \Ad_{t^\lambda}L^{<0}G$,
we have 
\[
\rho_x(s)(\gamma(t)x)
=s^\lambda\gamma(s^{-2}t)(s^{-2}t)^\lambda g_0w_1^{-1} s^{w_1\lambda}
=(\Ad_{s^\lambda}\gamma(s^{-2}t))x\in W^x.
\]
Thus $\rho_x(s)$ preserves $W^x$.

Since $\gamma(t)\in L^{<0}G\cap\on{Ad}_{t^\lambda}L^{<0}G$,
$\Ad_{s^\lambda}\gamma(s^{-2}t)$ goes to the unit $1\in LG$ 
as $s$ goes to $0$. 
It follows that the $\mathbb G_m$-action on $W^\lambda$
is contracting with unique fixed point $t^\lambda$.
Since $LG_\mu$ is obviously stable under 
$\rho_x(s)$, the $\mathbb G_m$-action 
preserves $W^x_y$ and $W^x_{\leq y}$.
\end{proof}

Consider the fixed points 
\begin{equation}\label{L}
L^x=(W^x)^{\on{inv}\circ\theta}
\cong (L^{<0}G\cap \Ad_{t^\lambda}L^{<0}G)^{\on{inv}\circ\psi_x},\quad
L^x_y=(W^x_y)^{\inv\circ\theta},\quad
L^x_{\leq y}=(W^x_{\leq y})^{\inv\circ\theta}.
\end{equation}

\begin{lem}\label{slice spherical}
\begin{itemize}
\item [(i)] 
$L^x$ is a connected  ind-scheme of ind-finite type and formally smooth.
\item [(ii)]
The map $\rho_x$ acts on $L^x$,
contracting it to the unique fixed point $x$.
\item [(iii)] The multiplication map $m:L^+G\times L^x\rightarrow L^\theta X$ is formally smooth.	
\item [(iv)] 
Denote $\cO_x=L^+G\cdot x$.
Then $L^\lambda\cap \cO_x=\{x_\lambda\}$ and 
$L^x\subset L^\theta X$ 
is transversal to $\cO_x$ at $x$. 
\item [(v)] 
All the $\bGm$-actions $\rho_x$ 
preserve all the $L^+G$-orbits $\cO_y$. 
Any $L^+G$-equivariant local system on  $\cO_y$ is $\bGm$-equivariant with respect to any $\rho_x$.
\end{itemize}
\end{lem}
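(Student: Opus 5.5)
The plan is to deduce each part from the corresponding facts about the group slice $W^\lambda\cong L^{<0}G\cap\Ad_{t^\lambda}L^{<0}G$ recalled in \cite[Lemma 42]{CYUntwisted}, using Lemma \ref{l:inv on spherical slices} and Lemma \ref{G_m action on Aff slices}, and taking fixed points of the involution $\on{inv}\circ\psi_x$. Throughout we use that $p\neq 2$, so that fixed points of an involution on a smooth (ind-)scheme remain smooth and behave well.

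For (i), write $L^{<0}G\cap\Ad_{t^\lambda}L^{<0}G$ as a pro-unipotent-type ind-scheme of ind-finite type. Its Lie algebra is $L^{<0}\fg\cap\Ad_{t^\lambda}L^{<0}\fg$, and the exponential-type coordinates (a product of root subgroups) identify it with an ind-affine space. The involution $\on{inv}\circ\psi_x$ is compatible with the contracting $\bGm$-action: by Lemma \ref{G_m action on Aff slices}, $\rho_x$ commutes with $\on{inv}\circ\theta$. Hence on each $\bGm$-stable finite-type piece we have an involution of a smooth affine variety contracted to a fixed point $1$. The fixed locus of a $\bZ/2$-action in characteristic $\neq 2$ is smooth, so $L^x$ is formally smooth. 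Connectedness follows because every point of $L^x$ is contracted by $\rho_x$ to $x$ inside $L^x$; this also gives (ii), since $\rho_x$ preserves the fixed points and the contraction is inherited from $W^x$. Ind-finite type is inherited from $W^x$, because $L^x$ is closed in $W^x$.

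For (iii), I would use the standard tangent-space argument. The map $L^{<0}G\times L^+G\to LG$ is an open embedding near the identity, and correspondingly $W^x\times L^+G\to LG$, $(w,g)\mapsto g w$, is formally smooth. For the symmetric version, at $x$ the tangent space of $L^\theta X$ is $(L\fg)^{-\psi_x}$. This decomposes as the $(-1)$-eigenspace of $L^+\fg+\Ad_x L^+\fg$ plus that of $L^{<0}\fg\cap\Ad_xL^{<0}\fg$, using the $\psi_x$-stable decomposition $L\fg=(L^+\fg+\Ad_xL^+\fg)\oplus(L^{<0}\fg\cap\Ad_{x}L^{<0}\fg)$. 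The image of $L^+\fg$ under $\xi\mapsto\xi-\psi_x(\xi)$ surjects onto the $(-1)$-part of $L^+\fg+\Ad_xL^+\fg$; this step uses $2$ invertible, because $\psi_x$ swaps $L^+\fg$ and $\Ad_xL^+\fg$ up to the twist. Hence $dm$ is surjective at points $(1,x)$. I would then spread surjectivity to all of $L^+G\times L^x$ using $L^+G$-equivariance and the $\bGm$-contraction, since the locus where $m$ is smooth is open and $\rho_x$-stable. This is the main obstacle: making formal smoothness rigorous on ind-schemes. I would reduce to finite-type truncations, as in \cite[\S6]{CYUntwisted}.

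For (iv), $W^\lambda\cap LG_\lambda=\{t^\lambda\}$ gives $W^x\cap L^+GxL^+G=\{x\}$, hence $L^x\cap\cO_x=\{x\}$. Transversality is exactly the tangent decomposition obtained in (iii).

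For (v), $\rho_x(s)$ is the composition of loop rotation $t\mapsto s^{-2}t$ with left and right multiplication by torus elements $s^\lambda$ and $s^{w_1\lambda}$. These are related through $\theta$, which makes the action preserve $L^\theta X$. Each such operation normalizes $L^+G$. It maps $\cO_y$ into the $L^+G$-orbit of $\rho_x(s)(y)$, and by connectedness of $\bGm$ together with the finiteness of orbits in $\overline{\cO_y}\setminus\cO_y$ of the same dimension, it preserves $\cO_y$. For equivariance of local systems, extend the action to $\bGm\ltimes L^+G$ acting on $\cO_y$. The stabilizer component group of the bigger group surjects from that of $L^+G$, because $\bGm$ is connected and acts through $L^+G$-orbit-preserving maps; a lift of $\rho$ lands in the identity component mod the $L^+G$ stabilizer. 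Hence every $L^+G$-equivariant local system extends uniquely to a $\bGm\ltimes L^+G$-equivariant one.
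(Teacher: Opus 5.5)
Parts (ii) and (iv) follow the paper. Part (i) reaches the right conclusion, but not for the reason you give. The natural $\bGm$-stable finite-type pieces of $W^x$ (e.g.\ $W^x_{\le y}$) are singular. Also, $L^{<0}G\cap\Ad_{t^\lambda}L^{<0}G$ is not in general an ind-affine space in root-subgroup coordinates; it is already non-reduced for a torus. What you need, and what the paper uses via \cite[Lemma 42, Lemma 97]{CYUntwisted}, is only two facts: $W^x$ is formally smooth, and the fixed locus of an involution on a formally smooth ind-scheme is formally smooth when $2$ is invertible (lift, then average the lift).

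\textbf{The genuine gap is in (v).} You only argue that $\pi_0(\Stab_{L^+G}(y))\to\pi_0(\Stab_{L^+G\rtimes\bGm}(y))$ is surjective. This makes restriction from $L^+G\rtimes\bGm$-equivariant to $L^+G$-equivariant local systems fully faithful, so it gives uniqueness of an extension. It does not give existence. An $L^+G$-equivariant local system, i.e.\ a representation of $\pi_0(\Stab_{L^+G}(y))$, extends only if it is trivial on the kernel of that map, so you also need injectivity. Connectedness of $\bGm$ does not supply it: for $1\to\mu_n\to\bGm\xrightarrow{z\mapsto z^n}\bGm\to1$, a nontrivial character of $\mu_n$ does not extend.

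The missing idea is a homomorphic section of $\Stab_{L^+G\rtimes\bGm}(y)\to\bGm$. The paper gets it from the explicit identity $\rho_x(s)(g\cdot y)=(s^\lambda r_s(g)s^{-\mu})\cdot y$ for $g\in L^+G$, where $r_s(g)(t)=g(s^{-2}t)$. This identity uses that $y=t^\mu g_y$ is a standard representative, so $g_y\in L_\mu w_1^{-1}$ and $\theta_0(\mu)=-w_1\mu$. It follows that $\rho_x(s)(y)=s^{\lambda-\mu}\cdot y$ and that $p(s)=(s^{\mu-\lambda},s)$ is a section. Hence the stabilizer is $\Stab_{L^+G}(y)\rtimes\bGm$ and the component groups agree.

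The same identity also shows directly that $\rho_x$ preserves $\cO_y$. Your connectedness/finiteness argument for this can be made to work, but the relevant finiteness is that of the $L^+G$-orbits in $(LG_\mu)^{\on{inv}\circ\theta}$, which $\rho_x$ preserves, not of orbits in $\overline{\cO_y}\setminus\cO_y$.

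In (iii), your tangent computation at $(1,x)$ is fine. Spreading it by openness of the smooth locus is not available for maps of infinite type, as you suspect. The paper sidesteps this:
\begin{itemize}
\item The map $L^+G\times W^x\times L^+G\to LG$ has surjective differential at every point.
\item It intertwines $(g_1,a,g_2)\mapsto(\sigma(g_2),\sigma(a),\sigma(g_1))$ with $\sigma=\on{inv}\circ\theta$.
\item The fixed locus is $L^+G\times L^x$, mapping to $L^\theta X$ by $m$, so averaging a preimage gives surjectivity at every point of $L^+G\times L^x$ at once.
\end{itemize}
Both factors of $L^+G$ are needed. Your auxiliary claim that $W^x\times L^+G\to LG$, $(w,g)\mapsto gw$, is formally smooth is false for non-central $\lambda$: the image of its differential at $x$ misses $\Ad_xL^+\fg\cap L^{<0}\fg$.
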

\begin{proof}
(i) follows from \cite[Lemma 42, Lemma 97]{CYUntwisted} and
Lemma \ref{l:inv on spherical slices}.
(ii) follows from Lemma \ref{G_m action on Aff slices}.

(iii):
It sufficies to check the surjectivity of the differential map.
From standard results on affine Grassmanian slices and $W^x=W^\lambda g_x$, 
we know that the multiplication map
\[
m: L^+G\times W^x\times L^+G\rightarrow LG,\quad
(g_1,a,g_2)\mapsto g_1ag_2
\]
is a submersion.

Denote $\sigma=\on{inv}\circ\theta$, 
which is an involution of space on $LG$.
Also denote by $\gamma(g_1,a,g_2)=(\sigma(g_2),\sigma(a),\sigma(g_1)$,
which is an involution on $L^+G\times W^x\times L^+G$.
Its fixed subspace is 
\[
\{(g_1,a,\sigma(g_1))\mid g_1\in L^+G,a\in(W^x)^\sigma\}
\cong L^+G\times L^x.
\]

Observe that $m$ intertwines $\gamma$ and $\sigma$.
The induced map on fixed subspaces is isomorphic to 
$m:L^+G\times L^x\rightarrow (LG)^\sigma$.
Now for any $p=(g,a)\in L^+G\times L^x\hookrightarrow L^+G\times W^x\times L^+G$, the surjective differential map
\[
\td m: T_p(L^+G\times W^x\times L^+G)\rightarrow T_{m(p)}LG
\]
intertwines $\gamma$ and $\sigma$.
Since both $\gamma$ and $\sigma$ are involutions, 
the induced map on fixed subspaces,
which is $\td m:T_p(L^+G\times L^x)\rightarrow T_{m(p)}(LG)^\sigma$,
is also surjective as desired.

(iv):
$L^x\cap \cO_x=\{x\}$ follows from
$W^x\cap L^+GxL^+G=(W^\lambda\cap LG_{t^\lambda})g_x=\{t^\lambda g_x=x\}$.
To show $L^x$ is transversal to $\cO_x$ at $x$, we need to check
    \[
    T_x \cO_x\oplus T_x L^x=T_x(LG)^{-\psi_x}.
    \]
    Explicitly, 
    \[
    T_x\cO_x=\{X-\psi_x(X)|X\in L^+\fg\},\quad
    T_x L^x=(L^{<0}\fg\cap\Ad_xL^{<0}\fg)^{-\psi_x}.
    \]
    It follows from part (iii) that the LHS spans RHS.
    It only remains to show the LHS is a direct sum,
    which follows from the decomposition
    \[
    L\fg=L^+\fg\oplus(\Ad_xL^+\fg\cap L^{<0}\fg)\oplus
    (L^{<0}\fg\cap\Ad_xL^{<0}\fg).
    \]

(v): 
    Let $y=t^\mu g_y$ be a standard spherical representative.
	Denote $r_s(\gamma(t)):=\gamma(s^{-2}t)$.
    For $g\in G(\cO)$, we have
	\[
	\rho_x(s)(g\cdot y)
	=s^\lambda r_s(g)(s^{-2}t)^\mu g_y\theta(r_s(g))^{-1}s^{w_1\lambda}
	=(s^\lambda r_s(g)s^{-\mu})\cdot y.
	\]
	As $s^\lambda r_s(g)s^{-\mu}\in G(\cO)$, 
	$\rho_x$ preserves $\cO_y$.
	
	Now we show any local system $\cL$ on $\cO_y$ 
	is $\rho_x$-equivariant. 
	Consider $\bGm$-action on $L^+G$:
	$\gamma_s(g)=\Ad_{s^\lambda}r_s(g)$.
	The actions of $L^+G$ and $\bGm$  via $\rho_x$ on $\cO_y$ at $y$
	can be integrated into an action of the semidirect product
	$L^+G\rtimes\bGm$ with respect to $\gamma_s$.
	Consider exact sequence
	\[
	1\rightarrow
	\mathrm{Stab}_{L^+G}(y)\rightarrow \mathrm{Stab}_{L^+G\rtimes\bGm}(y)\rightarrow
	\bGm,\quad
	g\mapsto (g,1),\quad (g,s)\mapsto s.
	\]
	Consider section $p:\bGm\rightarrow\mathrm{Stab}_{L^+G\rtimes\bGm}(y), 
	p(s)=(s^{\mu-\lambda},s)$.
	This is clearly a group homomorphism. 
	Thus the above exact sequence is a split short exact sequence.
	We get semidirect product
	\[
	\mathrm{Stab}_{L^+G\rtimes\bGm}(y)\cong
	\mathrm{Stab}_{L^+G}(y)\rtimes\bGm.
	\]
	Thus the component groups are 
	\[\pi_0(\mathrm{Stab}_{L^+G\rtimes\bGm}(y))\cong
	\pi_0(\mathrm{Stab}_{L^+G}(y)\rtimes\bGm)\cong
	\pi_0(\mathrm{Stab}_{L^+G}(y)).
	\]
	Therefore $L^+G$-equivariant local systems on $\cO_y$ are $\rho_x$-equivariant for any $y$.
\end{proof}

Denote $S^x_y:=L^x\cap\cO_y$,
$S^x_{\leq y}=L^x\cap\overline{\cO}_y$.
We say $x\leq y$ if $x\in\overline{\cO}_y$.

\begin{cor}\label{c:S^x_y equi-singular}
	Let $x,y$ be standard spherical representatives.
	\begin{itemize}    
		\item [(i)] The multiplication map $m:L^+G\times S^x_{\leq y}\rightarrow\overline{\cO}_y$ is 
		formally smooth.
		\item [(ii)] The $\bGm$-action $\rho_x$ acts on $S^x_{\leq y}$, 
		which fixes $x$, and contracts $S^x_{\leq y}$ to $x$.
        \item [(iii)] $S^x_y\neq\emptyset$ if and only if $x\leq y$.
		
	\end{itemize}
\end{cor}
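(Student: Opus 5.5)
All three parts will be deduced from Lemma \ref{slice spherical}, by restricting the statements there to the closed $L^+G$-stable subset $\overline{\cO}_y$.

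\emph{Part (i).} By Lemma \ref{slice spherical}(iii), the map $m:L^+G\times L^x\to L^\theta X$ is formally smooth. The set $\overline{\cO}_y$ is $L^+G$-stable, so for $(g,a)\in L^+G\times L^x$ we have $g\cdot a\in\overline{\cO}_y$ if and only if $a\in\overline{\cO}_y$. This gives a Cartesian square
\[
L^+G\times S^x_{\leq y}=m^{-1}(\overline{\cO}_y)\longrightarrow \overline{\cO}_y .
\]
Formal smoothness is stable under base change, so the restricted map $m:L^+G\times S^x_{\leq y}\to\overline{\cO}_y$ is formally smooth.

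\emph{Part (ii).} By Lemma \ref{slice spherical}(v), $\rho_x$ preserves each orbit $\cO_{y'}$. Hence it preserves $\overline{\cO}_y$, which is a finite union of such orbits (or, alternatively, is the closure of an invariant set). By Lemma \ref{slice spherical}(ii), $\rho_x$ also preserves $L^x$. It therefore preserves the intersection $S^x_{\leq y}$. The contraction statement is inherited from $L^x$: for $a\in S^x_{\leq y}$, the limit $\lim_{s\to0}\rho_x(s)a=x$ exists in $L^x$. Since $S^x_{\leq y}$ is closed in $L^x$ and stable under the action, this limit lies in $S^x_{\leq y}$. In particular $x\in S^x_{\leq y}$ as soon as $S^x_{\leq y}\neq\emptyset$.

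\emph{Part (iii), forward direction.} Suppose $S^x_y\neq\emptyset$ and pick $a\in L^x\cap\cO_y$. Every point of the orbit $\rho_x(\bGm)a$ lies in $\cO_y$, since $\rho_x$ preserves $\cO_y$. Its limit $x$ therefore lies in $\overline{\cO}_y$, so $x\leq y$.

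\emph{Part (iii), converse.} Suppose $x\leq y$, i.e.\ $x\in\overline{\cO}_y$. By part (i), the map $L^+G\times S^x_{\leq y}\to\overline{\cO}_y$ is formally smooth, and its image contains $\cO_x$ because $x\in S^x_{\leq y}$. The substantive point is that this image also meets the open dense orbit $\cO_y$. I would argue this as follows. The product $L^+G\cdot L^x$ is open in $L^\theta X$; this should follow from formal smoothness together with finite presentation, or from the fact that the $\bGm$-contraction sweeps out a neighbourhood of $x$. So $L^+G\cdot L^x$ is an open neighbourhood of $x$. Since $x\in\overline{\cO}_y$, this neighbourhood meets $\cO_y$. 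Write a point of the intersection as $g\cdot a$ with $g\in L^+G$ and $a\in L^x$. Then $a=g^{-1}\cdot(g\cdot a)\in\cO_y$, so $a\in S^x_y$, which is therefore nonempty.

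The main obstacle is the openness of $L^+G\cdot L^x$ near $x$ in the ind-placid setting. My first attempt would be to work inside the finite-type quotient $(L^+G)_n\backslash\overline{\cO}_y$, where formal smoothness of a finitely presented map implies it is smooth and hence open. If that fails, I would fall back on the transversality in Lemma \ref{slice spherical}(iv) together with the contracting $\bGm$-action: any $\bGm$-stable open subset containing $x$ must then contain all of $L^x$.
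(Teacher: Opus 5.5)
Your proposal is correct and follows essentially the same route as the paper. Part (i) is the base change of Lemma~\ref{slice spherical}(iii) along $\overline{\cO}_y\subset L^\theta X$. Part (ii) is inherited from the contraction on $L^x$; you rightly also invoke Lemma~\ref{slice spherical}(v) to get $\rho_x$-stability of $\overline{\cO}_y$, which the paper leaves implicit. Part (iii) uses the contraction for the forward direction and openness of the image of the formally smooth map $m$ for the converse. The only cosmetic difference is in that converse: you argue that the image is an open neighbourhood of $x$ and so meets $\cO_y$ by definition of closure, while the paper argues that the open image meets the open dense orbit $\cO_y$. Both rest on the same openness input, which the paper justifies via the placid presentation of $L^+G\times S^x_{\leq y}$ with surjective transition maps.
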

\begin{proof}
(i): This is the base-change of the formally smooth map
$L^+G\times L^x\to L^\theta X$ in Lemma \ref{slice spherical}.(iii) along the embedding $\overline{\cO}_y$, hence is formally smooth.

(ii): This follows from Lemma \ref{slice spherical}.(ii).

(iii):
Assume $S^x_y=L^x\cap\cO_y\neq\emptyset$. 
	Recall that we have the $\bGm$-action $\rho_x$ on $L^x$, 
	which also acts on all the $L^+G$-orbits $\cO_y$. 
	The contraction of $L^x\cap \cO_y$, 
	which gives $x$, 
	must be contained in $\overline{\cO}_y$. 
	Thus $x\leq y$.
	
	Conversely, if $x\leq y$, by definition 
	$x\in S^x_{\leq y}\neq\emptyset$.
    Since  $m:L^+G\times S^x_{\leq y}\to \overline{\cO}_y$ is formally smooth and $L^+G\times S^x_{\leq y}$ admits a placid 
    presentation with surjctive transition maps, 
    thus the image of $m$
    has nonempty intersection with the open dense subset $\cO_y$. 
    It follows that 
    $m^{-1}(\cO_y)=L^+G\times S^x_y\neq\emptyset$.
    Part (iii) follows.
\end{proof}

\subsection{Lagrangian and symplectic slices}
\subsubsection{Poisson structure}
We assume $p$ is large enough so that 
there is a non-degenerate symmetric bilinear invariant form 
$(\ ,\ )$ on $\fg$
that is also $\theta_0$-invariant.

Recall the affine Grassmannian slice $W^\lambda_{\leq\mu}$
has a natural Poisson structure and $W^\lambda_\nu$ for $\lambda\leq\nu\leq\mu$
are symplectic leaves of dimension 
$\dim W^\lambda_\nu=2\langle\rho,\nu-\lambda\rangle$
(see, e.g., \cite{KWWY}). 
Denote $\sigma=\inv\circ\theta$,
$L^\lambda_\mu=(W^\lambda_\mu)^\sigma$.
In the untwisted case we showed that $L^\lambda_\mu$ is a coisotropic subvariety of $W^\lambda_\mu$
\cite[Lemma 67]{CYUntwisted}.

We first review the construction of Poisson structure 
on the affine Grassmannian slice.
Recall the Manin triple $(L\fg,L^{<0}\fg,L^+\fg)$:
fix a non-degenerate symmetric bilinear invariant form $(\ ,\ )$ on $\fg$
that is also $\theta_0$-invariant.
Extend it $F$-linearly to $L\fg=\fg(F)$. 
Composing with residue map, we get a $k$-valued inner product on $L\fg$,
which we still denote by $(\ ,\ )$.
Then $L^{<0}\fg,L^+\fg$ are both Lagrangian subspaces of $L\fg$
with respect to $(\ ,\ )$.
Fix a basis $e_i^*$ of $L^{<0}\fg$
and a basis $e_i$ of $L^+\fg$
such that $(e_i^*,e_j)=\delta_{ij}$. 
Define $r$-matrix
\[
r=\sum_i e_i^*\wedge e_i\in\wedge^2L\fg,
\]
where the formal sum is understood as an element 
in the completion of $\fg(\!(t^{\pm})\!)\otimes\fg(\!(t^\pm)\!)$.
The definition of $r$ is independent of the choice of orthonormal basis. 
Since $\theta$ acts continuously on $L^{<0}\fg$ and $L^+\fg$, we have
\[
\theta(r)=r.
\]
Define a bi-vector field $\pi\in\Gamma(LG,\wedge^2TLG)$ by
\[
\pi(g):=R_{g*} r-L_{g*} r,
\]
where $g\in LG$, $L_g,R_g$ are left and right translations by $g$.
Then $\pi$ gives a Poisson bi-vector and defines a Poisson structure on $LG$. Moreover, $L^{<0}G,L^+G$ are Poisson subgroups of $LG$.
This induces a bi-vector field $\bar{\pi}$ on $\Gr$ from $r$.
Then $\bar{\pi}$ defines a Poisson structure on $\Gr$,
and both $L^+G$-orbits $p(LG_\lambda)=L^+G\cdot [t^\lambda]$
and $L^{<0}G$-orbits $p(W^\lambda)=L^{<0}G\cdot[t^\lambda]$
are Poisson subvarieties.
The intersections of $L^+G$-orbits and $L^{<0}G$-orbits, 
when nonempty, 
are symplectic leaves,
c.f. \cite[Corollary 2.9]{LYPoisson}, \cite[Theorem 2.5]{KWWY}.

\subsubsection{Untwisted Lagrangian slices}\label{sss:untwisted LS}
We first extend the result in the untwisted setting.
Only in this section, 
we use $\theta$ to denote the untwisted involution on $LG,L\fg$.
Recall we proved in \cite[Lemma 67]{CYUntwisted} that
$L^\lambda_\mu\subset W^\lambda_\mu$ is a coisotropic subvariety.
We now show that $L^\lambda_\mu\subset W^\lambda_\mu$ is also isotropic.
\begin{lem}\label{l:isotropic}
	\begin{itemize}
		\item [(i)] For $g\in LG$, $\pi(g^{-1})=-L_{g^{-1}*}R_{g^{-1}*}\pi(g)$.
		\item [(ii)] $\sigma_*(\pi)=-\pi$.
		\item [(iii)] $L^\lambda_\mu\subset W^\lambda_\mu$ is isotropic.
	\end{itemize}
\end{lem}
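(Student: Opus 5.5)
For (i), I will compute directly from $\pi(g)=R_{g*}r-L_{g*}r$. Substituting $g^{-1}$ gives
\[
\pi(g^{-1})=R_{g^{-1}*}r-L_{g^{-1}*}r.
\]
On the other hand,
\[
L_{g^{-1}*}R_{g^{-1}*}\pi(g)=L_{g^{-1}*}R_{g^{-1}*}R_{g*}r-L_{g^{-1}*}R_{g^{-1}*}L_{g*}r=L_{g^{-1}*}r-R_{g^{-1}*}r,
\]
using that left and right translations commute and $L_{g^{-1}}L_g=\mathrm{id}$, $R_{g^{-1}}R_g=\mathrm{id}$. This is exactly $-\pi(g^{-1})$. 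The only point to check is the convention for the formal completed sum, but all operations are continuous, so this causes no trouble.

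For (ii), I will write $\sigma=\on{inv}\circ\theta$ and treat the two factors separately.

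\emph{Inversion.} The differential of $\on{inv}$ at $g$ is $-L_{g^{-1}*}R_{g^{-1}*}$. Applied to a bivector, the sign squares away, so
\[
\on{inv}_*\pi(g)=L_{g^{-1}*}R_{g^{-1}*}\pi(g)=-\pi(g^{-1})
\]
by (i). Hence $\on{inv}_*\pi=-\pi$.

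\emph{The involution $\theta$.} The map $\theta$ is a group automorphism, so it commutes with translations: $\theta_*R_{g*}=R_{\theta(g)*}\theta_*$, and similarly for $L$. Since $\theta(r)=r$, this gives $\theta_*\pi(g)=\pi(\theta(g))$, i.e.\ $\theta_*\pi=\pi$. The invariance $\theta(r)=r$ uses that $\theta$ preserves $L^{<0}\fg$ and $L^+\fg$ and is orthogonal for the residue pairing; in the untwisted case this follows from $\theta_0$-invariance of $(\ ,\ )$. In the twisted case $t\mapsto -t$ changes the residue by a sign, but it maps $e_i^*\wedge e_i$ compatibly, so the conclusion should still hold.

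Combining the two, $\sigma_*\pi=-\pi$.

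For (iii), I will use that $\pi$ restricts to a symplectic form on the leaf $W^\lambda_\mu$ and that $\sigma$ preserves $W^\lambda_\mu$ (Lemma \ref{l:inv on spherical slices}, or its untwisted analogue). By (ii), $\sigma$ is anti-Poisson on $W^\lambda_\mu$, so the symplectic form satisfies $\sigma^*\omega=-\omega$.

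Now take $p\in L^\lambda_\mu$. We have $T_pL^\lambda_\mu=(T_pW^\lambda_\mu)^{\sigma}$, since the fixed locus of an involution is smooth in characteristic $\neq2$. For $u,v$ in this fixed space,
\[
\omega(u,v)=\omega(\sigma u,\sigma v)=-\omega(u,v),
\]
so $\omega(u,v)=0$ because $p\neq 2$. Thus $L^\lambda_\mu$ is isotropic, and indeed Lagrangian, consistent with the coisotropy of \cite[Lemma 67]{CYUntwisted}.

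The main obstacle I expect is the careful verification that $\theta$ fixes $r$, and the differential-of-inversion sign bookkeeping on bivectors. Both are formal.
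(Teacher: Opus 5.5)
Your parts (i) and (ii) are fine. In (i), your direct computation is a valid alternative to the paper's route, which uses multiplicativity $\pi(gh)=L_{g*}\pi(h)+R_{h*}\pi(g)$ together with $\pi(e)=0$. Part (ii) is the paper's argument.

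\textbf{The gap is in (iii),} at the step ``by (ii), $\sigma$ is anti-Poisson on $W^\lambda_\mu$.'' The symplectic form on $W^\lambda_\mu$ is not the restriction of $\pi$, because $W^\lambda_\mu$ is not a Poisson subvariety of $(LG,\pi)$. For example, $W^\lambda_\lambda=\{t^\lambda\}$, while $L_{t^{-\lambda}*}\pi(t^\lambda)=\Ad_{t^{-\lambda}}r-r\neq 0$ for non-central $\lambda$, since $\Ad_{t^{-\lambda}}$ moves the splitting $L^{<0}\fg\oplus L^+\fg$ that $r$ encodes. The form $\omega$ is instead induced from $\bar\pi$ on $\Gr=LG/L^+G$ via $W^\lambda\cong L^{<0}G\cdot[t^\lambda]$.

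However, $\sigma$ does not descend to $LG/L^+G$. Since $\sigma(gk)=\theta(k)^{-1}\sigma(g)$, it induces a map $LG/L^+G\to L^+G\backslash LG$. Statement (ii) only tells you that this map is anti-Poisson from the quotient structure on $LG/L^+G$ to the quotient structure on $L^+G\backslash LG$. To conclude $\sigma^*\omega=-\omega$ on $W^\lambda_\mu$, you need one more fact: the two Poisson structures on $W^\lambda$, transported from its isomorphic images in $LG/L^+G$ and in $L^+G\backslash LG$, coincide. This is exactly the extra input the paper invokes (``(ii) and two definitions of $\bar\pi$'', \cite[Lemma 66]{CYUntwisted}). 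Once you supply it, your fixed-vector argument $\omega(u,v)=-\omega(u,v)$ is the paper's. Note that the obstacle you flagged (checking $\theta(r)=r$ and the sign bookkeeping) is the formal part; this transport step is the non-formal one.

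\textbf{Your aside on the twisted case is wrong.} For $\theta(\gamma)(t)=\theta_0(\gamma(-t))$ the residue pairing is $\theta$-anti-invariant, so $\{\theta(e_i^*)\}$ is dual to $\{-\theta(e_i)\}$. Hence $\theta(r)=-r$ (Lemma \ref{l:twisted Poisson}(i)), and $\sigma$ becomes Poisson rather than anti-Poisson. The fixed loci are then symplectic, not isotropic (Proposition \ref{p:symplectic slice}). This does not affect the lemma, which is stated in the untwisted Section \ref{sss:untwisted LS}, but you should drop the claim that ``the conclusion should still hold.''
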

\begin{proof}
	(i). Since $L_{g*}$ and $R_{g*}$ commute, we have
	\[
	\pi(gh)=R_{gh*}r-L_{gh*}r
	       =R_{h*}R_{g*}r-L_{g*}L_{h*}r
	       =L_{g*}\pi(h)+R_{h*}\pi(g).
	\]
	Note that at unit, $\pi(e)=r-r=0$.
	Thus $0=\pi(e)=L_{g*}\pi(g^{-1})+R_{g^{-1}*}\pi(g)$,
	$\pi(g^{-1})=-L_{g^{-1}*}R_{g^{-1}*}\pi(g)$.
	
	(ii). For any $g\in LG$, $X\in T_gLG$, since $\inv(\exp(tX)g)=g^{-1}(\exp(-tX)g)g^{-1}$, we have
	\[
	\inv_{g*}(X)=-L_{g^{-1}*}R_{g^{-1}*}X.
	\]
	Thus for bivector $\pi(g)$, we get
	\[
	\inv_{g*}\pi(g)=(-1)^2L_{g^{-1}*}R_{g^{-1}*}\pi(g)
	=L_{g^{-1}*}R_{g^{-1}*}\pi(g).
	\]
	Combining this with (i), we obtain
	\[
	\pi(g^{-1})=-\inv_{g*}\pi(g).
	\]
	Since $r$ is $\theta$-invariant, we conclude
	\[
	\sigma_*\pi(g)
	=\inv_{\theta(g)*}(R_{\theta(g)*}\theta(r)-L_{\theta(g)*}\theta(r))
	=\inv_{\theta(g)*}\pi(\theta(g))
	=-\pi(\theta(g)^{-1})
	=-\pi(\sigma(g)).
	\]
	
	(iii). Let $\omega$ be the symplectic form on $W^\lambda_\mu$ induced from $\bar{\pi}$.
	Explicitly, for $g\in W^\lambda_\mu$, $v,w\in T_gW^\lambda_\mu$,
	let $v^*,w^*\in T_g^*W^\lambda_\mu$ be such that
	$v=\langle\bar{\pi}(g),v^*\rangle,w=\langle\bar{\pi}(g),w^*\rangle$.
	Then $\omega_g(v,w)=\bar{\pi}(g)(v^*,w^*)$, which can be verified to be well-defined.
	
	By (ii) and two definitions of $\bar{\pi}$ \cite[Lemma 66]{CYUntwisted}, 
    we also have $\sigma_*\bar{\pi}=-\bar{\pi}$.
	Thus we have
	\[
	(\sigma^*\omega)_g(v,w)=\sigma_*\bar{\pi}(g)(\sigma_g^*v^*,\sigma_g^*w^*)
	=-\bar{\pi}(\sigma(g))(\sigma_g^*v^*,\sigma_g^*w^*)
	=-\omega_{\sigma(g)}(\sigma_{g*}v,\sigma_{g*}w).
	\]
	
	Let $g\in L^\lambda_\mu=(W^\lambda_\mu)^\sigma$
	and $v,w\in T_gL^\lambda_\mu$. 
	Note $\sigma_{g*}v=v,\sigma_{g*}w=w$.
	We obtain
	\[
	\omega_g(v,w)=\omega_{\sigma(g)}(\sigma_{g*}v,\sigma_{g*}w)
	=(\sigma^*\omega)_g(v,w)
	=-\omega_g(v,w).
	\]
	Therefore $\omega_g(v,w)=0$,
	i.e. $L^\lambda_\mu$ is isotropic.
\end{proof}

Combining \cite[Lemma 67]{CYUntwisted} and Lemma \ref{l:isotropic}, we get
\begin{cor}\label{c:Lagrangian}
	For $\lambda,\mu\in\Lambda_S^+$, 
	$L^\lambda_\mu$ is Lagrangian in $W^\lambda_\mu$.
	In particular, 
	$\dim L^\lambda_\mu=\frac{1}{2}\dim W^\lambda_\mu=\langle\rho,\mu-\lambda\rangle$.
\end{cor}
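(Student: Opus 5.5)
The statement follows by combining the coisotropic property of \cite[Lemma 67]{CYUntwisted} with the isotropic property of Lemma \ref{l:isotropic}(iii), and then reading off the dimension from the symplectic leaf $W^\lambda_\mu$. Recall that $W^\lambda_\mu$ is a smooth symplectic variety of dimension $2\langle\rho,\mu-\lambda\rangle$, with symplectic form $\omega$ induced from $\bar\pi$ (\cite{KWWY}, \cite[Corollary 2.9]{LYPoisson}). The fixed locus $L^\lambda_\mu=(W^\lambda_\mu)^\sigma$ of the involution $\sigma=\inv\circ\theta$ is smooth. This is because $p\neq 2$ and $\sigma$ is an involution of a smooth variety, so $\sigma$ is linearizable near fixed points. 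Moreover, $T_gL^\lambda_\mu=(T_gW^\lambda_\mu)^{\sigma_{g*}}$ for each $g\in L^\lambda_\mu$.

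Fix $g\in L^\lambda_\mu$ and write $V=T_gW^\lambda_\mu$ and $U=T_gL^\lambda_\mu=V^{\sigma_{g*}}$. \cite[Lemma 67]{CYUntwisted} gives $U^{\perp_\omega}\subset U$. Lemma \ref{l:isotropic}(iii) gives $U\subset U^{\perp_\omega}$. Hence $U=U^{\perp_\omega}$, so $L^\lambda_\mu$ is Lagrangian in $W^\lambda_\mu$.

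The dimension follows directly. Since $\omega_g$ is nondegenerate, $\dim U+\dim U^{\perp_\omega}=\dim V$. With $U=U^{\perp_\omega}$ this gives $\dim U=\tfrac12\dim V=\langle\rho,\mu-\lambda\rangle$ at every point. Thus every component of $L^\lambda_\mu$ has this dimension; we may also note that $L^\lambda_\mu$ is nonempty, as it contains $t^\lambda$ when $\lambda$ is $\sigma$-compatible.

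An alternative argument avoids Lemma 67 and makes the half-dimension transparent. By Lemma \ref{l:isotropic}(ii) and the computation in (iii), $\sigma^*\omega=-\omega$. So $\sigma_{g*}$ is anti-symplectic on $V$. It therefore exchanges nothing but acts with eigenspaces $V^{+}$ and $V^{-}$, and $\omega(V^\pm,V^\pm)=0$. Since $\omega$ is nondegenerate and pairs $V^+$ with $V^-$, both $V^+$ and $V^-$ are Lagrangian. Since $U=V^+$, this gives the claim directly.

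The main point to verify is the smoothness and tangent-space identification of the fixed locus, which requires $p\neq2$. A second point is that the symplectic form on $W^\lambda_\mu$ in the untwisted setting is genuinely the one for which Lemma \ref{l:isotropic} was proved. I expect these to be routine, so the eigenspace argument is the one I would present.
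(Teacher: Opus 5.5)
Your first argument (coisotropy from \cite[Lemma 67]{CYUntwisted}, isotropy from Lemma~\ref{l:isotropic}(iii), then $\dim U+\dim U^{\perp}=\dim V$) is exactly the paper's proof. The eigenspace argument you say you would actually present takes a genuinely different route, and it is correct. At a fixed point $g$, $\sigma^*\omega=-\omega$ (the content of the computation in Lemma~\ref{l:isotropic}(iii)) gives $\omega_g(\sigma_{g*}v,\sigma_{g*}w)=-\omega_g(v,w)$. So both $V^{+}$ and $V^{-}$ are isotropic. Since $p\neq 2$ gives $V=V^+\oplus V^-$, each has dimension at most, hence exactly, $\tfrac12\dim V$. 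What this buys you is that the coisotropy input is no longer needed; the earlier paper obtains it from a separate $r$-matrix computation. Here it is recovered by pointwise linear algebra from the single fact that $\sigma$ is anti-symplectic. Your argument then mirrors the paper's proof of Proposition~\ref{p:symplectic slice}, where $\sigma_x^*\omega=\omega$ forces the fixed locus to be symplectic instead. The paper's route is shorter on the page only because Lemma 67 is already available. Both routes rely equally on the smoothness of $(W^\lambda_\mu)^\sigma$ and on $T_gL^\lambda_\mu=V^+$ (cf.\ \cite[Lemma 45 (ii)]{CYUntwisted}).

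One side remark of yours is wrong, though your proof does not depend on it. The point $t^\lambda$ lies in $W^\lambda_\lambda$, not in $W^\lambda_\mu$ for $\mu\neq\lambda$, so it does not show $L^\lambda_\mu\neq\emptyset$. The dimension formula should be read for nonempty $L^\lambda_\mu$. When $L^\lambda_\mu$ is nonempty, this comes from the slice at $t^\lambda$ meeting a larger $L^+G$-orbit in $LG_\mu$ whose closure contains $t^\lambda$, as in the untwisted analogue of Corollary~\ref{c:S^x_y equi-singular}(iii).
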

This generalizes the special case 
\cite[Proposition 68]{CYUntwisted} of $\lambda=0$.

\subsubsection{Twisted symplectic slices}\label{sss:twisted SS}
Henceforth we switch back to the twisted involution $\theta$
on $LG,L\fg$.
Let $x,y$ be standard spherical representatives.
Recall slice $W^x=W^\lambda g_x$ and $W^x_y=W^\lambda_\mu g_x$.
We define the Poisson structure on $W^x$ and 
symplectic structure on $W^x_y$
via the isomorphism to $W^\lambda,W^\lambda_\mu$ 
by right multiplication by $g_x^{-1}$.

Denote $\sigma=\inv\circ\theta$,
which acts on $W^x,W^x_y$.
It induces an involution 
$\sigma_x(a)=\theta(g_x)^{-1}\theta(a)^{-1}g_x^{-1}$
on $W^\lambda,W^\lambda_\mu$ via the above isomorphism.
Denote
\[
{}_xL^\lambda=(W^\lambda_\mu)^{\sigma_x}=L^xg_x^{-1},\quad
{}_xL^\lambda_\mu=(W^\lambda_\mu)^{\sigma_x}=L^x_y g_x^{-1}.
\]

Thus equivalently, the subjects under study are subvarieties
${}_xL^\lambda,{}_xL^\lambda_\mu$ of $W^\lambda,W^\lambda_\mu$. 
As in the untwisted case \cite[Lemma 45 (ii)]{CYUntwisted}, 
${}_xL^\lambda_\mu$ is smooth. 
We impose the same Poisson structure on $W^\lambda$ as in the untwisted case
by the same $r$-matrix $r=\sum_i e_i^*\wedge e_i$
where $e_i^*,e_i$ are orthonormal basis of $L^{<0}\fg,L^+\fg$.
Recall $\fg=\fk\oplus\fp$,
so that the $\theta_0$-invariant $(\ ,\ )$ is non-degenerate on
$\fk\times\fk,\fp\times\fp$.
We take orthonormal basis $e_\alpha$ of $\fk,\fp$
and let $e^*_i=t^{-j-1}e_\alpha,e_i=t^je_\alpha$, $j\geq0$.

\begin{lem}\label{l:twisted Poisson}
	\begin{itemize}
		\item [(i)]
		$\theta(r)=-r$.
		\item [(ii)]
		The induced form $(\ ,\ )$ on $L\fg$ is $\theta$-anti-invariant.
		\item [(iii)]
		$(L\fg)^\theta=k(\!(t^2)\!)\fk\oplus tk(\!(t^2)\fp$,
		$(L\fg)^{-\theta}=tk(\!(t^2)\!)\fk\oplus k(\!(t^2)\!)\fp$.
		The orthogonal complements of 
		$(L\fg)^\theta,(L\fg)^{-\theta}$ are themselves.
		\item [(iv)]
		For $g\in G$, $\Ad_g(r)=r$.
		\item [(v)] 
		$\sigma_{x*}(\pi)=\pi$.
		\item [(vi)]
		Let $\omega$ be the symplectic form on $W^\lambda_\mu$ induced from $\bar{\pi}$.
		Then for any $g\in W^\lambda_\mu$ and $v,w\in T_gW^\lambda_\mu$,
		$(\sigma_x^*\omega)_g(v,w)=\omega_{\sigma_x(g)}(\sigma_{xg*}v,\sigma_{xg*}w)$.
	\end{itemize}
\end{lem}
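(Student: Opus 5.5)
I will verify the six items in order; most are direct computations from the definitions, using the explicit basis chosen just before the lemma.

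\textbf{Parts (ii) and (iii).} For (ii), let $X=\sum X_n t^n$ and $Y=\sum Y_n t^n$. Then
\[
(\theta X,\theta Y)=\Res\sum (-1)^{n+m}(\theta_0X_n,\theta_0Y_m)t^{n+m}\,dt .
\]
Only terms with $n+m=-1$ contribute to the residue, and for these $(-1)^{n+m}=-1$. Combined with $\theta_0$-invariance of the form, this gives $(\theta X,\theta Y)=-(X,Y)$.

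For (iii), $\theta$ acts on $t^j\fk$ by $(-1)^j$ and on $t^j\fp$ by $(-1)^{j+1}$, which yields the stated decompositions. Each eigenspace is isotropic by (ii): for $X,Y$ in the same eigenspace, $(X,Y)=-(\theta X,\theta Y)=-(X,Y)$. The eigenspaces are also complementary and $L\fg$ is the direct sum, so by nondegeneracy each eigenspace equals its own orthogonal complement.

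\textbf{Part (i).} Use the chosen basis $e_i^*=t^{-j-1}e_\alpha$, $e_i=t^je_\alpha$. Then
\[
\theta(e_i^*)\wedge\theta(e_i)=(-1)^{-j-1}(-1)^{j}\,\epsilon_\alpha^2\, e_i^*\wedge e_i=-\,e_i^*\wedge e_i,
\]
where $\epsilon_\alpha=\pm1$ is the $\theta_0$-eigenvalue of $e_\alpha$. Summing over $i$ gives $\theta(r)=-r$.

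\textbf{Part (iv).} $\Ad_g$ for $g\in G$ preserves $L^{<0}\fg$ and $L^+\fg$ and is orthogonal for the form. Hence it sends a dual pair of bases to a dual pair of bases, and $r$ is independent of that choice.

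\textbf{Part (v).} Write $\sigma_x=R_{g_x^{-1}}\circ\sigma\circ R_{g_x}$, where $\sigma=\inv\circ\theta$. By the computation of Lemma~\ref{l:isotropic}(ii), $\inv_*\pi(h)=-\pi(h^{-1})$. Note that the proof of Lemma~\ref{l:isotropic}(i) only uses the definition of $\pi$, so this formula holds here too. Also, $\theta_*\pi$ is the bivector built from $\theta(r)=-r$, so $\theta_*\pi=-\pi\circ\theta^{-1}$. Together these give $\sigma_*\pi=\pi$, with the sign flipped relative to the untwisted case.

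It remains to handle right translation by $g_x\in G$. The identity $\pi(hg)=L_{h*}\pi(g)+R_{g*}\pi(h)$ holds, and $\pi(g)=R_{g*}r-L_{g*}r=L_{g*}(\Ad_g r-r)=0$ by (iv). Hence $R_{g*}\pi=\pi$, and so $\sigma_{x*}\pi=\pi$. The same holds for $\bar\pi$ on $W^\lambda$, via the two descriptions from \cite[Lemma 66]{CYUntwisted}.

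\textbf{Part (vi).} Repeat the computation of Lemma~\ref{l:isotropic}(iii) with $\sigma_*\bar\pi=+\bar\pi$ in place of $-\bar\pi$. Without the sign, this yields the stated identity.

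The main obstacle is bookkeeping of signs and residues in (i)–(iii). In particular, one must confirm that the chosen basis really consists of dual pairs. This holds because $(t^{-j-1}e_\alpha,t^{j'}e_\beta)$ has residue $\delta_{jj'}\delta_{\alpha\beta}$.
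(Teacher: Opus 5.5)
Your proposal is correct and follows essentially the same route as the paper. You use the same sign computations with the adapted basis $t^{-j-1}e_\alpha, t^{j}e_\alpha$ for (i)–(ii) and the orthogonality of $\Ad_g$ for (iv). For (v)–(vi) you use the same combination $\inv_*\pi=-\pi$, $\theta_*\pi=-\pi$, with (iv) absorbing the translations by the constant element $g_x$. Writing $\sigma_x=R_{g_x^{-1}}\circ\sigma\circ R_{g_x}$ instead of the paper's $L_{\theta(g_x)^{-1}}R_{g_x^{-1}}\circ\sigma$ is only a cosmetic difference. Your isotropy-plus-nondegeneracy argument for (iii) fills in what the paper calls a straightforward computation. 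The one slip is harmless: $R_{g*}r=L_{g*}\Ad_{g^{-1}}r$, so the expression should read $\Ad_{g^{-1}}r-r$, which vanishes equally by (iv).
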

\begin{proof}
(i).
$\theta(r)=\sum_i(-1)^{-j-1+j}e_i^*\wedge e_i=-r$.

(ii).
$(\theta(t^ie),\theta(t^jf))=(-1)^{i+j}\delta_{i+j,-1}(e,f)=-(t^ie,t^jf)$.

(iii). Follows by straightforward computation.

(iv).
Since $(\ ,\ )$ is adjoint invariant, $\Ad_g\in\mathrm{GL}(\fg)$
is orthogonal with respect to the inner product,
so that $\{\Ad_ge_\alpha\}$ is another set of orthonormal basis.
Then it is elementary to see 
$\sum_\alpha t^{-j-1}\Ad_g e_\alpha\wedge t^j\Ad_g e_\alpha=\sum_\alpha t^{-j-1}e_\alpha\wedge t^j e_\alpha$ for any $j$.	

(v).
For any $g\in LG$, $X\in T_gLG$, as in Lemma \ref{l:isotropic}, we have
$\pi(g^{-1})=-\inv_{g*}\pi(g)$.
By $\theta(r)=-r$, we have
\begin{align*}	
	\sigma_{x*}\pi(g)
	&=R_{g_x^{-1}*}L_{\theta(g_x)^{-1}*}\inv_{\theta(g)*}(R_{\theta(g)*}\theta(r)-L_{\theta(g)*}\theta(r))\\
	&=-R_{g_x^{-1}*}L_{\theta(g_x)^{-1}*}\inv_{\theta(g)*}\pi(\theta(g))\\
	&=R_{g_x^{-1}*}L_{\theta(g_x)^{-1}*}\pi(\theta(g)^{-1})\\
	&=\pi(\sigma_x(g)).
\end{align*}
In the last equality we used (iv).

(vi).
Explicitly, for $g\in W^\lambda_\mu$, $v,w\in T_gW^\lambda_\mu$,
let $v^*,w^*\in T_g^*W^\lambda_\mu$ such that
$v=\langle\bar{\pi}(g),v^*\rangle,w=\langle\bar{\pi}(g),w^*\rangle$.
Then $\omega_g(v,w)=\bar{\pi}(g)(v^*,w^*)$.
We also have $\sigma_{x*}\bar{\pi}=\bar{\pi}$.
Thus we have
\[
(\sigma_x^*\omega)_g(v,w)=\sigma_{x*}\bar{\pi}(g)(\sigma_{xg}^*v^*,\sigma_{xg}^*w^*)
=\bar{\pi}(\sigma_x(g))(\sigma_{xg}^*v^*,\sigma_{xg}^*w^*)
=\omega_{\sigma_x(g)}(\sigma_{xg*}v,\sigma_{xg*}w).
\]
\end{proof}

\begin{prop}\label{p:symplectic slice}
	\begin{itemize}
		\item [(i)]
		${}_xL^\lambda_\mu\subset W^\lambda_\mu$ is a symplectic subvariety.
		In particular, any component of ${}_xL^\lambda_\mu$ is even dimensional.
		\item [(ii)]
		If ${}_xL^\lambda_\mu\subset W^\lambda_\mu$ is positive dimensional,
		it is neither isotropic nor coisotropic.
	\end{itemize}
\end{prop}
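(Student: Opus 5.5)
The strategy is to use Lemma \ref{l:twisted Poisson}(vi): $\sigma_x$ is an involution of $W^\lambda_\mu$ that \emph{preserves} the symplectic form $\omega$. This is in contrast to the untwisted case, where $\sigma_*\pi=-\pi$ made the fixed locus isotropic. The general principle is that the fixed locus of a symplectic involution is a symplectic submanifold.

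\textbf{Step 1: the tangent space is an eigenspace.} Fix $g\in{}_xL^\lambda_\mu$, and write $V=T_gW^\lambda_\mu$ and $A=\sigma_{xg*}$. Then $A$ is a linear involution of $V$, since $\sigma_x(g)=g$ and $\sigma_x^2=\mathrm{id}$. By Lemma \ref{l:twisted Poisson}(vi), $\omega_g(Av,Aw)=\omega_g(v,w)$. Since $p\neq2$, $V=V^+\oplus V^-$ splits into the $\pm1$-eigenspaces of $A$. We have ${}_xL^\lambda_\mu$ smooth, as noted before the lemma. Moreover, in characteristic $\neq2$ the fixed scheme of an involution on a smooth variety is smooth with tangent space $V^+$, e.g.\ by linearizing the $\bZ/2$-action. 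So $T_g({}_xL^\lambda_\mu)=V^+$.

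\textbf{Step 2: nondegeneracy on $V^+$.} For $v\in V^+$ and $w\in V^-$,
\[
\omega_g(v,w)=\omega_g(Av,Aw)=-\omega_g(v,w),
\]
so $\omega_g(v,w)=0$ and $V^+\perp V^-$. Since $\omega_g$ is nondegenerate on $V=V^+\oplus V^-$, it restricts to nondegenerate forms on both $V^+$ and $V^-$. Hence ${}_xL^\lambda_\mu$ is a symplectic subvariety, and each component has even dimension $\dim V^+$. This proves (i).

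\textbf{Step 3: part (ii).} Suppose ${}_xL^\lambda_\mu$ is positive dimensional, so $V^+\neq0$ at some point $g$. Then $\omega|_{V^+}\neq0$, so the subvariety is not isotropic.

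Coisotropic would mean $(V^+)^{\perp}\subset V^+$. By Step 2 we have $(V^+)^\perp=V^-$ and $V^+\cap V^-=0$, so coisotropy forces $V^-=0$, i.e.\ ${}_xL^\lambda_\mu$ would be open in $W^\lambda_\mu$. This is the main obstacle: we must show $V^-\neq0$, i.e.\ $\sigma_x$ is not the identity near $g$.

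My first attempt is a dimension count. For $\mu\neq\lambda$, the fixed locus of a nontrivial involution on the irreducible $W^\lambda_\mu$ is a proper closed subset, so if $\sigma_x\neq\mathrm{id}$ then $V^-\neq0$ at every fixed point. I would show $\sigma_x\neq\mathrm{id}$ on the tangent space $T_xW^x_y\subset L^{<0}\fg\cap\Ad_xL^{<0}\fg$. There $\sigma_x$ acts by $-\psi_x$, and the relevant $\psi_x$-eigenspaces are, up to $\Ad$ by an element of $G$, twisted by $t\mapsto -t$. They are described by Lemma \ref{l:twisted Poisson}(iii): both $(L\fg)^{\theta}$ and $(L\fg)^{-\theta}$ are nonzero in every $t$-degree range of length two. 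Consequently a nonzero tangent space cannot lie entirely in one eigenspace, and the half-dimensional count shows both $V^\pm$ are nonzero. If this degree argument gets messy, I would instead compare with Corollary \ref{c:S^x_y equi-singular} and the transversality in Lemma \ref{slice spherical}(iv) to get $\dim {}_xL^\lambda_\mu<\dim W^\lambda_\mu$ directly.
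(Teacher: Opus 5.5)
Your Steps 1--2 and the non-isotropy half of Step 3 are correct and match the paper's proof. The $\sigma_x$-invariance of $\omega$ (Lemma \ref{l:twisted Poisson}(vi)) gives $V^+\perp V^-$, so $\omega|_{V^+}$ is nondegenerate, and $V^+\neq0$ rules out isotropy. You are also right that non-coisotropy is equivalent to $V^-=(V^+)^\perp\neq0$.

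The gap is your attempt to prove $V^-\neq0$, and it cannot be repaired, because $V^-\neq0$ fails in general. Take $G=\SL_2$ and $\theta_0=\mathrm{id}$, the case of the paper's final Example. Take $x=1$, so that $\lambda=0$, $g_x=1$ and $\sigma_x(a)(t)=a(-t)^{-1}$, and take $\mu=\alpha^\vee$. Then $W^0_{\leq\alpha^\vee}=\{1+t^{-1}n\mid n\in\mathcal{N}_{\mathfrak{sl}_2}\}$, and since $n^2=0$,
\[
\sigma_x(1+t^{-1}n)=(1-t^{-1}n)^{-1}=1+t^{-1}n .
\]
So $\sigma_x$ is the identity on $W^0_{\leq\alpha^\vee}$. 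Hence ${}_xL^0_{\alpha^\vee}=W^0_{\alpha^\vee}$ is two-dimensional, symplectic, and trivially coisotropic.

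This example shows where each of your proposed arguments breaks.
\begin{itemize}
\item \emph{Degree argument.} Lemma \ref{l:twisted Poisson}(iii) only says that both $\theta$-eigenspaces of $L\fg$ are nonzero in every pair of consecutive degrees. The tangent spaces of $W^\lambda_\mu$ need not contain such pairs. Here the tangent space of $W^0_{\leq\alpha^\vee}$ at the cone point is $t^{-1}\fg$, on which $d\sigma_x=-\theta$ acts by $+1$. So a nonzero tangent space lies entirely in one eigenspace, contrary to your ``consequently''.
\item \emph{Half-dimensional count.} No such count is available in the twisted case. It is the Lagrangian statement of Corollary \ref{c:Lagrangian}, which is exactly what fails here.
\item \emph{Tangent space at $x$.} $T_xW^x_y$ is not defined, since $x\notin W^x_y$ for $y\neq x$.
\item \emph{Transversality fallback.} Comparing with Lemma \ref{slice spherical}(iv) cannot yield $\dim{}_xL^\lambda_\mu<\dim W^\lambda_\mu$ either, by the same example.
\end{itemize}

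The non-coisotropy claim needs the extra hypothesis ${}_xL^\lambda_\mu\neq W^\lambda_\mu$. Since $W^\lambda_\mu$ is smooth and irreducible, this is equivalent to $V^-\neq0$ at every point. The paper's proof assumes this without saying so (``not contained in each other if nonzero''). Under that hypothesis, your observation that $(V^+)^\perp=V^-$ and $V^+\cap V^-=0$ already finishes the argument. You should impose the hypothesis rather than try to prove it.
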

\begin{proof}
	(i).
	Take $a\in {}_xL^\lambda_\mu$, a $\sigma_x$-fixed point.
	Denote $V=T_aW^\lambda_\mu$, 
	on which the differential of the involution $\sigma_x$ acts.
	Denote $V=V^+\oplus V^-$ the $+1,-1$ eigenspaces.
	Then $T_a({}_xL^\lambda_\mu)=V^+$.
	By Lemma \ref{l:twisted Poisson}.(vi), 
	the symplectic form $\omega_a$ on $T_aW^\lambda_\mu$ is $\sigma_x$-invariant.
	Thus for any $v\in V^+,w\in V^-$,
	\[
	\omega_a(v,w)=\omega_a(\sigma_x(v),\sigma_x(w))=-\omega_a(v,w),\quad
	\omega_a(V^+,V^-)=0.
	\]
	
	We need to show $\omega_a|_{V^+}$ is non-degenerate.
	For any $v\in V^+$, we need to find $w\in V^+$ such that 
	$\omega_a(v,w)\neq0$.
	Since $\omega_a$ is non-degenerate on $V$, there exists
	$w=w_++w_-\in V$, $w_+\in V^+,w_-\in V^-$, such that
	$0\neq\omega_a(v,w)=\omega_a(v,w_+)$, 
	which completes the proof.
	Thus $(V^+,\omega_a|_{V^+})$ is a symplectic space,
	which is even dimensional.
	
	(ii).
	Keep the notations as in part (i).
	Denote by $(V^+)^\perp$ the orthogonal complement of $V^+\subset V$
	with respect to $\omega_a$.
	By (i), we see $V^+\cap(V^+)^\perp=0$.
	Thus $V^+,(V^+)^\perp$ are not contained in each other if nonzero.
\end{proof}

\begin{rem}
	We explain why the proof for isotropicity and coisotropicity 
	for the untwisted involution fails for the twisted involution.
	In the untwisted case, the proof for both properties boils down to
	the vanishing of a number $c$, which is proved by $c=-c$.
	
	For isotropicity, $c=\omega(v,w)$, and the minus sign comes from
	$\sigma^*\omega=-\omega$.
	However, in the twisted case $\sigma^*\omega=\omega$ as in Lemma \ref{l:twisted Poisson}.(vi), so that there is no vanishing argument.
	
	For coisotropicity, $c=(Ad_{h^{-1}}r-\Ad_{\theta(h)^{-1}}r,z\wedge w)$,
	where in the twisted case $\theta_0(r)=r,\theta_0(z)=-z,\theta_0(w)=-w$,
	$(\ ,\ )$ is $\theta_0$-invariant, so that $c=\theta(c)=-c$.
	But in the twisted case, $\theta(r)=-r$, $\theta(z)=-z,\theta(w)=-w$.
	Also, $(\ ,\ )$ is $\theta$-anti-invariant on vectors, 
	thus $\theta$-invariant on bivectos.
	Therefore $c=\theta(c)=c$, and there is no vanishing argument.
\end{rem}

\subsubsection{Orbit slices}
\begin{lem}\label{l:orbit slice}
	Let $x\leq y$ be standard spherical representatives.
	\begin{itemize}
		\item [(i)]
		$\cO_y$ is open in $(L^+GyL^+G)^\sigma$.
		\item [(ii)]
		$S^x_y=L^x\cap\cO_y$ is open in $L^x_y=L^x\cap(L^+GyL^+G)^\sigma$.
		As a result, $S^x_y$ is even-dimensional. 
	\end{itemize}
\end{lem}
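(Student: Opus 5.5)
The plan for (i) is to exhibit $(L^+GyL^+G)^\sigma$ as a finite union of $L^+G$-orbits in which $\cO_y$ is open. First, $(L^+GyL^+G)^\sigma = LG_\mu\cap L^\theta X$ is $L^+G$-stable under the $\theta$-twisted action, since $g\gamma\theta(g)^{-1}\in L^+GyL^+G$ for $g\in L^+G$. By Proposition \ref{p:spherical orbits}, every $L^+G$-orbit in it is represented by some $t^{\mu'}g_0w_1^{-1}$. This representative lies in $LG_\mu$ exactly when $\mu'$ is $W$-conjugate to $\mu$. Using $\lambda=-w_1^{-1}\theta_0(\lambda)$, one then expects $\mu'$ to be conjugate to $\mu$ under the relevant twisted Weyl action. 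So the orbits in $(LG_\mu)^\sigma$ are indexed by the finitely many $L_\mu$-orbits on $A_{\mu,0}$.

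To get openness, we compare tangent spaces at $y$, since homogeneity then propagates it to all of $\cO_y$. On one side, $T_y\cO_y=\{X-\psi_y(X)\mid X\in L^+\fg\}$. On the other side, $T_y(LG_\mu)^\sigma=(L^+\fg+\Ad_yL^+\fg)^{-\psi_y}$, using that $\sigma$ preserves $LG_\mu$ and $T_y(LG_\mu)=(L^+\fg+\Ad_yL^+\fg)y$. Averaging $Z=X+\Ad_yX'$ by $\tfrac12(Z-\psi_yZ)$, and using $\psi_y(\Ad_yX')=\Ad_y\theta(X')$ together with $\psi_y(L^+\fg)=\Ad_yL^+\fg$ (as $\theta$ preserves $L^+\fg$), every anti-invariant vector can be written as $X''-\psi_y(X'')$ with $X''\in L^+\fg$. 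Hence the two tangent spaces agree.

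Both spaces are smooth at $y$: the orbit is homogeneous, and the fixed points of an involution on the smooth space $LG_\mu$ are smooth in characteristic $\neq2$, working with finite-type quotients modulo congruence subgroups. Equal tangent spaces then make $\cO_y$ open in the fixed locus near $y$, and hence everywhere by $L^+G$-equivariance. The main obstacle I expect is making this rigorous in the placid setting. I would pass to the finite-dimensional quotient $L^+G\backslash(LG_\mu)^\sigma\to L_\mu\backslash A_{\mu,0}$ used in the proof of Theorem \ref{Placidness of orbits}, where the question reduces to openness of an $L_\mu$-orbit in a union of finitely many orbits in a smooth variety with matching tangent space.

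For (ii), intersect with $L^x$: $L^x\cap(LG_\mu)^\sigma=(W^x_y)^\sigma=L^x_y$, and since $\cO_y$ is open in $(LG_\mu)^\sigma$, $S^x_y$ is open in $L^x_y$. Under right multiplication by $g_x^{-1}$, $L^x_y$ corresponds to ${}_xL^\lambda_\mu$. By Proposition \ref{p:symplectic slice}(i), every component of ${}_xL^\lambda_\mu$ is even-dimensional, so the open subset $S^x_y$ is even-dimensional as well.
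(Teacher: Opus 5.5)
Your argument is essentially the paper's: the paper proves (i) by the same tangent-space comparison at $y$ with the same averaging trick, and your extra care about smoothness and the reduction to the finite-type quotient only fills in what the paper leaves implicit; (ii) is then deduced from (i) and Proposition \ref{p:symplectic slice}(i) exactly as you do. One computational slip needs fixing: since $y\theta(y)=1$, the correct identity is $\psi_y(\Ad_yX')=\Ad_{y\theta(y)}\theta(X')=\theta(X')$, not $\Ad_y\theta(X')$, and it is this identity (together with $\psi_y(L^+\fg)=\Ad_yL^+\fg$) that makes $\psi_y$ swap the two summands and gives $Z=W-\psi_y(W)$ with $W=\tfrac12\bigl(X-\theta(X')\bigr)\in L^+\fg$, which is precisely the paper's $Z=\tfrac12(X+\td\sigma(Y))$.
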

\begin{proof}
	(i).
	We show the inclusion $\cO_y\subset(L^+GyL^+G)^\sigma$
	is a submersion.
	Denote $\sigma_y=\Ad_y\circ\sigma$,
    with differential $d\sigma_y$.
	Since $L^+GyL^+G=L^+G\Ad_y L^+Gy$,
	we have
	\[
	T_y(L^+GyL^+G)^\sigma=R_{y*}(L^+\fg+\Ad_yL^+\fg)^{\td\sigma_y}.
	\]
	On the other hand,
	\[
	T_y\cO_y=R_{y*}\{X+\td\sigma_y(X)\mid X\in L^+\fg\}.
	\]
	Thus for any $X+\Ad_y Y=\td\sigma_y(X)+\td\sigma(Y)\in(L^+\fg+\Ad_y L^+\fg)^{\td\sigma_y}$,
	$X,Y\in L^+\fg$,
	let $Z=\frac{1}{2}(X+\td\sigma(Y))\in L^+\fg$.
	We have
	\[
	Z+\td\sigma_y(Z)
	=\frac{1}{2}(X+\td\sigma(Y)+\td\sigma_y(X)+\Ad_y(Y))
	=X+\Ad_yY.
	\]
	This completes the proof.
	
	(ii).
	This immediately follows from part (i) and Proposition \ref{p:symplectic slice}.(i).
\end{proof}

\subsection{Orthogonal and symplectic nilpotent orbits}
We discuss slice realization of some nilpotent orbits 
parallel to \cite[\S6.2]{CYUntwisted}.

Let $G=\GL_n$ and $\theta_0=(g^t)^{-1}$
(resp. $\theta_0(g)=J^{-1}(g^t)^{-1}J$ for $n=2m$ even, $J=\begin{pmatrix}0&I_m\\ -I_m&0\end{pmatrix}$).
Then $K=G^{\theta_0}=\on{O}_n$ (resp. $K=\Sp_n$).
We have 
$\fg=\mathfrak{gl}_{n}$, 
$\td\theta_0(g)=-g^t$ (resp. $\td\theta(g)=-J^{-1}g^tJ$), 
$\fk=\fg^{\td\theta_0}=\mathfrak{o}_n$ (resp. $\fk=\fsp_{n}$).
Denote by $\sN\subset\mathfrak g$ the nilpotent cone.
Denote the nilpotent orbit associated to 
a partition $\lambda\in\cP_n$ of $n$ by $\cO_\lambda$. 
Denote $\sN_\fk=\sN\cap\fk$.
The $K$-orbits of $\sN_\fk$ are classified by
even (resp. odd) partitions $\cP_{n,\fk}$, 
i.e. partitions whose even (resp. odd) parts occur with even multiplicity.
For $\lambda\in\cP_{n,\fk}$,
denote the associated $K$-orbit by $\cO_{\fk,\lambda}$.

Identify the set $\Lambda_T^+$ with 
$\Lambda_T^+=\{(\lambda_1\geq \lambda_2\geq\cdot\cdot\cdot\geq \lambda_n)\}$
so that 
\[LG_\lambda=L^+G\on{diag}(t^{\lambda_1-1},...,t^{\lambda_n-1})L^+G\]
for $\lambda\in\Lambda_T^+$.
Here we abuse the notation by using the translation $\lambda-1$,
which is used only in this section.
Recall the Lusztig embedding 
\begin{equation}\label{eq:GLn Lusztig embed iota_n}
	\iota:\sN\hookrightarrow W^0=L^{<0}G,\quad x\mapsto(I_n-t^{-1}x).
\end{equation}
It induces an isomorphism 
\[\sN\cong W^0_{\leq\omega_n}=L^{<0}G\cap \overline{LG}_{\omega_n}\]
where $\omega_n=(n\geq0\geq\cdots\geq0)$. 
In addition, for $\lambda\in\cP_n$, the above isomorphism 
restricts to isomorphisms
\[\cO_\lambda\cong W^0_\lambda,\ \ \ \ \ \ \ 
\overline{\cO}_\lambda\cong W^0_{\leq\lambda}.\]
 
Note that the Lusztig embedding intertwines the 
involution 
$\td\theta_0$ on $\sN$ with $\on{inv}\circ\theta$ on $W^0$
and hence induces isomorphisms
\[
\sN_\fk\cong (W^0_{\leq\omega_n})^{\on{inv}\circ\theta},
\quad
\cO_{\fk,\lambda}\cong(W^0_\lambda)^{\inv\circ\theta},
\quad
\overline\cO_{\fk,\lambda}\cong (W^0_{\leq\lambda})^{\on{inv}\circ\theta},
\quad
\lambda\in\cP_{n,\fk}
\]
on the fixed points that is equivariant under conjugation by $K$.
For each $\lambda\in\cP_{n,\fk}$,
fix a representative $c_\lambda\in\cO_{\fk,\lambda}$
and let $x_\lambda=I_n-t^{-1}c_\lambda\in(W^0_\lambda)^{\inv\circ\theta}$.
Note that $x_\lambda$ is not a spherical standard representative
in general.
Write $x_\lambda=g_\lambda t^\lambda y_\lambda\theta(g_\lambda)^{-1}$
where $g_\lambda\in G(\cO)$, $t^\lambda y_\lambda$ is a standard spherical representative.
Applying the twisted conjugation by $g_\lambda$ to 
Corollary \ref{c:S^x_y equi-singular},
we obtain that
$L^{x_\lambda}:=g_\lambda L^{t^\lambda y_\lambda}\theta(g_\lambda)^{-1}$
is transversal to $\cO_{x_\lambda}$ at $x_\lambda$

\begin{prop}\label{slice=nilp}\mbox{}
For any $\lambda\in\cP_{n,\fk}$,
we have $\mathbb G_m$-equivariant isomorphisms
\[
\cO_{\fk,\lambda}
\cong(W^0_\lambda)^{\inv\circ\theta}
=S^0_{x_\lambda},
\qquad
\overline\cO_{\fk,\lambda}\cong S^0_{\leq x_\lambda}.
\]
Moreover, for any $\lambda,\mu\in\cP_{n,\fk},\ \lambda\leq\mu$, 
the	variety $L^{x_\lambda}\cap S^0_{\leq x_\mu}$ defines a $\bGm$-contracting transversal slice 
to the nilpotent orbit
$\cO_{\fk,\lambda}$ inside $\overline\cO_{\fk,\mu}$.	
\end{prop}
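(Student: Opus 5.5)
The plan is to reduce everything to the untwisted Lusztig embedding statements already recalled above the proposition, and then transport structure by the twisted conjugation by $g_\lambda$.

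\textbf{Step 1: the orbit isomorphism.} The Lusztig embedding $\iota$ intertwines $\td\theta_0$ on $\sN$ with $\on{inv}\circ\theta$ on $W^0$, so it already gives $\cO_{\fk,\lambda}\cong (W^0_\lambda)^{\inv\circ\theta}$. We must identify the right-hand side with $S^0_{x_\lambda}=L^0\cap\cO_{x_\lambda}$, where $L^0=(W^0)^{\inv\circ\theta}$ and $\cO_{x_\lambda}=L^+G\cdot x_\lambda$. By definition,
\[
(W^0_\lambda)^{\inv\circ\theta}=L^0\cap (LG_\lambda)^{\inv\circ\theta}.
\]
By Lemma \ref{l:orbit slice}.(i), $\cO_{x_\lambda}$ is open in $(LG_\lambda)^{\inv\circ\theta}$. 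Hence $S^0_{x_\lambda}$ is an open subset of $(W^0_\lambda)^{\inv\circ\theta}\cong\cO_{\fk,\lambda}$, and it contains $x_\lambda$.

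It remains to show this open subset is everything. Any point of $(W^0_\lambda)^{\inv\circ\theta}$ has the form $I_n-t^{-1}c$ with $c\in\cO_{\fk,\lambda}$. Then $c=\Ad_k c_\lambda$ for some $k\in K$. Since $k$ is constant and $\theta$-fixed, the twisted conjugation $k\cdot x_\lambda=k x_\lambda\theta(k)^{-1}=k x_\lambda k^{-1}$ equals $I_n-t^{-1}c$. So every such point lies in the $K$-orbit of $x_\lambda$, hence in $\cO_{x_\lambda}$, and the open subset is the whole space.

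\textbf{Step 2: the closure statement and $\bGm$-equivariance.} Taking closures inside the closed subscheme $(W^0_{\leq\omega_n})^{\inv\circ\theta}$, the closure of $S^0_{x_\lambda}$ is the image of $\overline\cO_{\fk,\lambda}$. We must compare this with $S^0_{\leq x_\lambda}=L^0\cap\overline\cO_{x_\lambda}$.

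The containment $\overline{S^0_{x_\lambda}}\subset S^0_{\leq x_\lambda}$ is clear. For the reverse, $S^0_{\leq x_\lambda}$ is a union of pieces $S^0_{y}$ with $y\le x_\lambda$, and each lies in $(W^0_{\le \lambda})^{\inv\circ\theta}\cong\overline\cO_{\fk,\lambda}$. Alternatively, use that $\overline\cO_{x_\lambda}\subset(\overline{LG_\lambda})^{\inv\circ\theta}$.

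For equivariance: here $x=1$, so $\rho_0(s)$ is $\gamma(t)\mapsto\gamma(s^{-2}t)$. On $I_n-t^{-1}c$ this gives $I_n-s^2t^{-1}c$, i.e.\ scaling $c$ by $s^2$. This is the standard conical action on $\sN_\fk$, so the isomorphisms are $\bGm$-equivariant.

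\textbf{Step 3: the slice.} The variety $L^{x_\lambda}=g_\lambda L^{t^\lambda y_\lambda}\theta(g_\lambda)^{-1}$ is transversal to $\cO_{x_\lambda}$ at $x_\lambda$. This holds by Lemma \ref{slice spherical}.(iv) together with Corollary \ref{c:S^x_y equi-singular}.(i), transported by twisted conjugation. Intersecting with $S^0_{\le x_\mu}\cong\overline\cO_{\fk,\mu}$ should give a transversal slice to $S^0_{x_\lambda}\cong\cO_{\fk,\lambda}$ inside $\overline\cO_{\fk,\mu}$.

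Concretely, I would use the formal smoothness of $L^+G\times(L^{x_\lambda}\cap\overline\cO_{x_\mu})\to\overline\cO_{x_\mu}$. This map is obtained by restricting to the $L^0$-part via the product decomposition. One expects
\[
\overline\cO_{x_\mu}\cap(\text{neighborhood of }L^0)\cong \text{(orbit directions)}\times S^0_{\le x_\mu}
\]
locally. This is the untwisted argument of \cite[\S6.2]{CYUntwisted}.

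\textbf{Main obstacle.} The contracting $\bGm$-action on $L^{x_\lambda}$ is $\rho_{t^\lambda y_\lambda}$ conjugated by $g_\lambda$, and it is not obviously compatible with $\rho_0$ on $S^0$. I would take the transported action $g_\lambda\cdot\rho_{t^\lambda y_\lambda}$. This contracts $L^{x_\lambda}$ to $x_\lambda$ by Lemma \ref{slice spherical}.(ii) and preserves $\overline\cO_{x_\mu}$ by part (v) of the same lemma. I would then verify that it preserves $L^{x_\lambda}\cap S^0_{\le x_\mu}$; if $L^0$ is not preserved, I would instead define the slice as $L^{x_\lambda}\cap\overline\cO_{x_\mu}$ and show it lies in $L^0$ near $x_\lambda$. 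This compatibility is the step I expect to require the most care.
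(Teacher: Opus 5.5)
\textbf{Steps 1 and 2.} These are the paper's argument. $(W^0_\lambda)^{\inv\circ\theta}$ is a single $K$-orbit under twisted conjugation. For constants in $K$ this is ordinary conjugation, so it preserves both $L^0$ and $\cO_{x_\lambda}$. The closure statement is the sandwich $\overline{S^0_{x_\lambda}}\subset S^0_{\le x_\lambda}\subset(W^0_{\le\lambda})^{\inv\circ\theta}\cong\overline\cO_{\fk,\lambda}$. The detour through Lemma \ref{l:orbit slice}(i) is unnecessary. Your explicit formula for $\rho_0$ is a fine substitute for the paper's appeal to Lemma \ref{slice spherical}.

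\textbf{Step 3 has a genuine gap.} ``Should give'' and ``one expects'' are the whole content, and the obstacle you name is left open. The missing idea is not to intersect with $L^0$ at all, but to use a smooth correspondence. This is what the paper's one-line ``base change of Corollary \ref{c:S^x_y equi-singular} to $S^0_{\le x_\mu}$'' amounts to when made precise.

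Put $\Sigma:=L^{x_\lambda}\cap\overline\cO_{x_\mu}$ and $Z:=\{(g,a)\in L^+G\times\Sigma: g\cdot a\in L^0\}$.
\begin{itemize}
\item The map $Z\to S^0_{\le x_\mu}$, $(g,a)\mapsto g\cdot a$, is formally smooth. It is the base change of the $g_\lambda$-transport of Corollary \ref{c:S^x_y equi-singular}(i) along $S^0_{\le x_\mu}\hookrightarrow\overline\cO_{x_\mu}$.
\item The projection $Z\to\Sigma$ is formally smooth. After inverting $g$, it is the base change of Lemma \ref{slice spherical}(iii) for $x=1$ along $\Sigma\hookrightarrow L^\theta X$.
\end{itemize}
Hence $\overline\cO_{\fk,\mu}$ along $\cO_{\fk,\lambda}$ is smoothly equivalent to $\Sigma$ at $x_\lambda$. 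Moreover, $\Sigma$ carries the contracting action transported from Corollary \ref{c:S^x_y equi-singular}(ii). The $\bGm$-action never has to preserve $L^0$.

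\textbf{Your worry about the literal intersection $L^{x_\lambda}\cap S^0_{\le x_\mu}$ is justified.} The paper's sentence does not address it. The transported action is $\gamma\mapsto h_s\,\gamma(s^{-2}t)\,\theta(h_s)^{-1}$ with $h_s=g_\lambda s^{\lambda}g_\lambda(s^{-2}t)^{-1}$. This $h_s$ is a non-constant element of $L^+G$, and twisted conjugation by it need not preserve $L^{<0}G$.

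In fact, your fallback is exactly what the literal claim requires. At $x_\lambda$, inside $V=T\overline\cO_{x_\mu}$, set $A=T\cO_{x_\lambda}$, $B=T\Sigma$, $C=TS^0_{\le x_\mu}$ and $D=T(Kx_\lambda)$.
\begin{itemize}
\item $V=A\oplus B$, by the transported Corollary \ref{c:S^x_y equi-singular}(i) and Lemma \ref{slice spherical}(iv).
\item $V=A+C$, by Corollary \ref{c:S^x_y equi-singular}(i) at $x=1$.
\item $A\cap C=D$, since $L^0\cap\cO_{x_\lambda}=Kx_\lambda$ is smooth.
\end{itemize}
Elementary linear algebra then gives $C=D\oplus(B\cap C)$ if and only if $B\subset C$. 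So $L^{x_\lambda}\cap S^0_{\le x_\mu}$ is transversal to $\cO_{\fk,\lambda}$ at $x_\lambda$ only if $T_{x_\lambda}\Sigma\subset T_{x_\lambda}L^0$.

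Nothing guarantees this for an arbitrary choice of $g_\lambda$. Consider the analogous computation for $\mathfrak{gl}_3$ without involution, with $x=I-t^{-1}E_{12}$ and $g_\lambda$ taken from Smith normal form. There the two-dimensional $A_2$-slice meets $L^{<0}G$ only in a pair of lines. So prove the smooth-equivalence statement instead of trying to show $\Sigma\subset L^0$.
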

\begin{proof}
since $\cO_{\fk,\lambda}$ is a single $K$-orbit,
$(W^0_\lambda)^{\inv\circ\theta}$
is a single $\theta$-twisted $K$-orbit
containing $S^0_{x_\lambda}$.
Since $S^0_{x_\lambda}$ is stable under the $K$-action,
the first isomorphism follows.

To prove the second isomorphism,
we have inclusion
$\overline\cO_{\fk,\lambda}\cong (W^0_{\leq\lambda})^{\on{inv}\circ\theta}\supset S^0_{\leq x_\lambda}$.
Conversely,
$S^0_{\leq x_\lambda}=L^0\cap\overline{\cO}_{x_\lambda}$
is closed in $L^0\cap\overline{LG}_\lambda^{\inv\circ\theta}=(W^0_{\leq\lambda})^{\on{inv}\circ\theta}\cong\overline\cO_{\fk,\lambda}$.
Since $\cO_{\fk,\lambda}\subset S^0_{\leq x_\lambda}$
is open dense in $\overline\cO_{\fk,\lambda}$,
we obtain $\overline\cO_{\fk,\lambda}\cong S^0_{\leq x_\lambda}$.

The $\bGm$-equivariance of the isomorphisms 
follow from Lemma \ref{slice spherical}.

The properties of the slice 
$L^{x_\lambda}\cap S^0_{\leq x_\mu}$
follow from a base change of 
Corollary \ref{c:S^x_y equi-singular} to $S^0_{\leq x_\mu}$.
\end{proof}
	
\begin{rem}\label{LWW}
For $G$ of simply-laced type and $\theta_0$ corresponding to a quasi-split Satake diagram, 
the fixed locus $(W^\lambda_\mu)^{\inv\circ\theta}$ 
appeared in the work \cite{LWWislice}, where it is called an \emph{affine Grassmannian islices}.
In \cite[Theorem 6.6]{LWWislice},
   it is proved that under Mirkovi\'c-Vybornov isomorphism, 
   $(W^\lambda_{\leq\mu})^{\inv\circ\theta}$
   is isomorphic to the Slodowy slice to $\cO_{\fk,\lambda}$
   inside $\overline\cO_{\fk,\mu}$.   
  \quash{
   It is unclear whether our slice 
   $S^{x_\lambda}_{\leq x_\mu}\cap S^0_{\leq x_\mu}$
   is the same as this Slodowy slice
   under Mirkovi\'c-Vybornov isomorphism.
   Note that $(W^\lambda_{\leq\mu})^{\inv\circ\theta}\subset L^{<0}G t^\lambda$
   while $S^{x_\lambda}_{\leq x_\mu}\cap S^0_{\leq x_\mu}\subset L^{<0}G$,
   so they are disjoint as subspaces of $LG$. }
\end{rem}

\subsection{Slices for Iwahori orbits}

\subsubsection{Involutions on affine flag slices}
Let $T_0\subset B_0\subset G$ and $I_0$ be
$\theta_0$-stable 
maximal torus, Borel subgroup, and Iwahori of $LG$ as before.
Denote the opposite Iwahori by $I_0^-:=B_0^-L^{<0}G$, 
where $B_0^-$ is the Borel opposite to $B_0$.
Let $I_0^{--}=U_0^-L^{<0}G$, 
$U_0^-$ the unipotent radical of $B_0^-$.
For any  $w\in\widetilde W=N_{LG}(LT_0)/L^+T_0$ 
 we denote by $\Fl_w\subset\Fl=LG/I_0$ the corresponding $I_0$-orbit on the affine flag variety 
 and $\overline{\Fl}_w$ its closure.
 We denote by 
$LG_w,\overline{LG}_w\subset LG$ the pre-images of $\Fl,\overline{\Fl}_w$ in $LG$.

Let $n\in N_{LG}(LT_0)$ be a lifting of $w$.
The affine flag slice of $LG_w$ in $LG$ at $n$ is defined as 
\[
W^{n}=(I_0^{--}\cap\on{Ad}_{n} I_0^{--})n\subset LG.
\]
For any pair $n,n'\in N_{LG}(LT_0)$ with images $w,w'\in\widetilde W$, 
we define
$W^{n}_{n'}=W^n\cap LG_{w'}$ and $W^n_{\leq n'}=W^n\cap \overline{LG}_{w'}$.
Some well-known facts we need about these slices are
listed in \cite[Lemma 53]{CYUntwisted}.

Since $\theta(T_0)=T_0$,
the normalizer $N_{LG}(LT_0)$
is stable under the involution $\on{inv}\circ\theta$ of $LG$.
By the same proof as \cite[Lemma 54]{CYUntwisted}, we have
\begin{lem}\label{inv of W^n}
Let
$n,n'\in (N_{LG}(LT_0))^{\on{inv}\circ\theta}$.
Then $W^n$, $W^n_{n'}$, and $W^n_{\leq n'}$
are stable under 
the involution $\on{inv}\circ\theta:LG\to LG$.
Moreover, the identification 
$W^n\cong I_0^{--}\cap \Ad_{n} I_0^{--}$, sending $\gamma\cdot n\to \gamma $, intertwines the involution $\on{inv}\circ\theta|_{W^n}$
with the involution $\on{inv}\circ\psi_n$
on $I_0^{--}\cap \Ad_{n} I_0^{--}$,
where 
$\psi_n=\on{Ad}_n\circ\theta:LG\to LG$.
\end{lem}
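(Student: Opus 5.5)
The plan mirrors the proof of Lemma \ref{l:inv on spherical slices}, with $L^{<0}G$ replaced by $I_0^{--}$ and $t^\lambda$ replaced by $n$. Write $\sigma=\on{inv}\circ\theta$.

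First, I will check that $I_0^{--}$ is $\theta$-stable. Since $B_0$ and $T_0$ are $\theta_0$-stable, so is $B_0^-$, and hence so is $U_0^-$. Moreover $L^{<0}G=\ker(G[t^{-1}]\to G)$ is preserved by $t\mapsto -t$ and by $\theta_0$. Therefore $\theta(I_0^{--})=I_0^{--}$. The same reasoning applies to $I_0$, using that $I_0$ is $\theta$-stable, as noted before Proposition \ref{p:Iwahori orbits}.

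Next comes the key computation. Since $n$ is $\sigma$-fixed, $\theta(n)=n^{-1}$, so $\psi_n=\Ad_n\circ\theta$ is an involution. For $\gamma\in I_0^{--}\cap\Ad_nI_0^{--}$ we have
\[
\sigma(\gamma n)=\theta(n)^{-1}\theta(\gamma)^{-1}=n\,\theta(\gamma)^{-1}=(\Ad_n\theta(\gamma)^{-1})\,n=(\on{inv}\circ\psi_n)(\gamma)\,n .
\]
It remains to show that $I_0^{--}\cap\Ad_nI_0^{--}$ is $\psi_n$-stable (inversion preserves it trivially). Applying $\psi_n$ gives
\[
\Ad_n\theta(I_0^{--})\cap\Ad_n\theta(\Ad_nI_0^{--})=\Ad_nI_0^{--}\cap\Ad_n\Ad_{\theta(n)}I_0^{--}=\Ad_nI_0^{--}\cap I_0^{--},
\]
using $\theta(n)=n^{-1}$. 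This proves that $W^n$ is $\sigma$-stable and gives the claimed intertwining.

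For $W^n_{n'}$ and $W^n_{\leq n'}$, it suffices to show that $LG_{w'}$ and its closure are $\sigma$-stable, where $LG_{w'}=I_0n'I_0$ is the preimage of $\Fl_{w'}$. We compute $\sigma(I_0n'I_0)=\theta(I_0)^{-1}\theta(n')^{-1}\theta(I_0)^{-1}=I_0n'I_0$, using $\theta(I_0)=I_0$ and $\theta(n')^{-1}=n'$. Since $\sigma$ is a homeomorphism, the closure $\overline{LG}_{w'}$ is also $\sigma$-stable, and intersecting with $W^n$ finishes the proof.

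The only point requiring care is the $\theta$-stability of $I_0^{--}$ under the twisted involution $t\mapsto -t$, but this is immediate from the explicit description $I_0^{--}=U_0^-L^{<0}G$. I do not expect a real obstacle.
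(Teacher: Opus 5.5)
Your proof is correct and follows essentially the same route as the paper, which simply defers to the untwisted Lemma 54 of the earlier work. That argument is the computation $\on{inv}\circ\theta(\gamma n)=\theta(n)^{-1}\theta(\gamma)^{-1}=(\Ad_n\theta(\gamma)^{-1})\,n$ from Lemma \ref{l:inv on spherical slices}, together with the $\psi_n$-stability of $I_0^{--}\cap\Ad_nI_0^{--}$ and the $\on{inv}\circ\theta$-stability of $I_0n'I_0$; your check that $I_0^{--}=U_0^-L^{<0}G$ and $I_0$ are stable under the twisted $\theta$ is the only input specific to the twisted setting, and you handle it correctly.
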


Assume further that $n_v=n\in N_{LG}(LT_0)^{\on{inv}\circ\theta}$
has the form 
$n=\bar nt^\lambda$
where $\bar{n}\in N_{G}(T_0)$ and $\lambda\in X_*(T_0)$
	satisfy $\theta_0(\bar{n})=(-1)^{\theta_0(\lambda)}\bar{n}^{-1},\
	\theta_0(\lambda)=-\Ad_{\bar{n}}\lambda$.
Observe $\psi_n|_{T_0}=\psi_{\bar n}=\on{Ad}_{\bar n}\circ\theta_0:T_0\to T_0$.
Choose a cocharacter $\mu\in X_*(T_0^{\psi_{\bar{n}}})$ 
as in \cite[\S6.4]{MS}, so that
$\langle\mu,\alpha\rangle<0$ 
for all positive roots $\alpha>0$ with $\psi_{\bar{n}}\alpha>0$.
Let $m=\max\{1,\langle\mu,\alpha\rangle+1,\forall\alpha\in\Phi\}$ 
where $\Phi$ is the set of roots of $G$. 
Consider the following map on $LG$:
\begin{equation}
	\phi_s=\phi_s^v: 
	\gamma(t)\mapsto s^{\mu-m\theta_0(\lambda)}\gamma(s^{-2m}t)s^{-\theta_0(\mu)+m\lambda}. 
\end{equation}

By the same proof as \cite[Lemma 55]{CYUntwisted}, we have
\begin{lem}\label{G_m action on AF slices}
Let $n,n'\in N_{LG}(LT_0)^{\on{inv}\circ\theta}$
and assume $n=\bar n t^\lambda$ as above.
\begin{itemize}
\item [(i)]
The two maps 
$\phi_s$ and $\on{inv}\circ\theta$ commute with each other.
\item [(ii)]
$\phi_s$
restricts to a $\mathbb G_m$-action $\rho_s$ on $W^n,W^{n}_{n'}, W^{n}_{\leq n'}$.
The $\mathbb G_m$-action $\rho_s$ on $W^n$ and $W^n_{\leq n'}$
is contrating 
with unique fixed point $n$.
\end{itemize}
\end{lem}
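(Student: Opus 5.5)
The plan has two parts: a direct computation of how $\phi_s$ interacts with $\on{inv}\circ\theta$, and then a weight analysis of $\phi_s$ on the affine root subgroups of $I_0^{--}\cap\Ad_n I_0^{--}$. Write $\phi_s(\gamma)=a_s\,r_{s}(\gamma)\,b_s$, where
\[
a_s=s^{\mu-m\theta_0(\lambda)},\qquad b_s=s^{-\theta_0(\mu)+m\lambda},\qquad r_s(\gamma)(t)=\gamma(s^{-2m}t).
\]

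For (i), we have $\theta(r_s(\gamma))=r_s(\theta(\gamma))$, because the substitution $t\mapsto -t$ commutes with rescaling $t\mapsto s^{-2m}t$. Since $a_s,b_s$ are constant elements of $T_0$, $\theta$ acts on them through $\theta_0$. Hence
\[
\on{inv}\circ\theta(\phi_s(\gamma))=\theta_0(b_s)^{-1}\,r_s(\theta(\gamma)^{-1})\,\theta_0(a_s)^{-1}.
\]
Commutation therefore reduces to the identities $\theta_0(b_s)^{-1}=a_s$ and $\theta_0(a_s)^{-1}=b_s$. Using $\theta_0^2=1$, these hold by inspection: $\theta_0(-\theta_0(\mu)+m\lambda)=-\mu+m\theta_0(\lambda)$. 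This step is formal.

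For (ii), I first check that $\phi_s(n)=n$. With $n=\bar n t^\lambda$, we get $r_s(n)=\bar n s^{-2m\lambda}t^\lambda$. Thus
\[
\phi_s(n)=s^{\mu-m\theta_0(\lambda)}\,\bar n\, s^{-2m\lambda+m\lambda-\theta_0(\mu)}\,t^\lambda .
\]
Moving $\bar n$ to the left, the torus factor becomes $s^{\mu-m\theta_0(\lambda)+\bar n(-m\lambda-\theta_0(\mu))}$. By $\theta_0(\lambda)=-\Ad_{\bar n}\lambda$, the $\lambda$-terms cancel. Since $\mu\in X_*(T_0^{\psi_{\bar n}})$, we have $\Ad_{\bar n}\theta_0(\mu)=\mu$, so the $\mu$-terms cancel as well. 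Hence $\phi_s(n)=n$.

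It follows that for $\gamma n\in W^n$ one has $\phi_s(\gamma n)=\Ad_{a_s}(r_s\gamma)\cdot n$. Here $\Ad_{a_s}\circ r_s$ is a group automorphism preserving $T_0$, and it acts on the affine root subgroup $U_{\alpha+k\delta}$ by the weight $\langle\mu-m\theta_0(\lambda),\alpha\rangle-2mk$. The key step, and the main obstacle, is to show that this weight is positive for every affine root $\alpha+k\delta$ occurring in $I_0^{--}\cap \Ad_n I_0^{--}$. This is exactly where the choice of $\mu$ from \cite[\S6.4]{MS} and the bound $m>\langle\mu,\alpha\rangle$ enter. I would follow \cite[Lemma 55]{CYUntwisted}, splitting into cases: roots with $k\le -1$, where $-2mk\ge 2m$ dominates after combining with the $\lambda$-shift coming from $\Ad_n$-conjugation, and roots with $k=0$ and $\alpha<0$, where the sign condition on $\mu$ is used.

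Positivity of all weights means that $\Ad_{a_s}\circ r_s$ preserves the pro-unipotent group $I_0^{--}\cap\Ad_nI_0^{--}$, since that group is a product of such root subgroups, and contracts it to $1$ as $s\to0$. Consequently $\rho_s$ preserves $W^n$ and contracts it to the unique fixed point $n$. Finally, $\phi_s$ has the form $t'\,r_s(\cdot)\,t''$ with $t',t''\in T_0\subset I_0$, and $r_s$ preserves $I_0$. So $\phi_s$ maps each $I_0\times I_0$-double coset $LG_{w'}$ to itself, and preserves its closure. Therefore $\rho_s$ preserves $W^n_{n'}$ and $W^n_{\le n'}$. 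The latter is closed in $W^n$ and contains $n$, so the action on it is again contracting with unique fixed point $n$.
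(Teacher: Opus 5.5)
Your proposal is correct and is essentially the paper's argument; the paper simply invokes the proof of \cite[Lemma 55]{CYUntwisted}, and you use the same ingredients: (i) via $\theta_0(b_s)^{-1}=a_s$, the check $\phi_s(n)=n$, the reduction $\rho_s(\gamma n)=\Ad_{a_s}(r_s\gamma)\,n$, and positivity of the weights on $I_0^{--}\cap\Ad_nI_0^{--}$ (which is an ind-group, not a pro-unipotent one). To make your key step explicit: with $j=k-\langle\lambda,\bar n^{-1}\alpha\rangle$ the weight is $\langle\mu,\alpha\rangle-m(k+j)$ with $k,j\le 0$, so only the case $k=j=0$ needs the sign condition on $\mu$, and there you must first convert $\bar n^{-1}\alpha<0$ into $\psi_{\bar n}\alpha<0$ using $\theta_0(\bar n)\in T_0\bar n^{-1}$ and $\theta_0(B_0)=B_0$.
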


We preserve the set ups in Lemma \ref{G_m action on AF slices}.
Consider the fixed points 
\begin{equation}\label{L}
L^n:=(W^n)^{\on{inv}\circ\theta}
\cong (I_0^{--}\cap \Ad_{n}I_0^{--})^{\on{inv}\circ\psi_n}.
\end{equation}

The following proposition implies  that 
$L^n$ defines a
transversal contracting slice 
to the orbit $\cO_v$ at $n_v$.
\begin{prop}\label{Iwahori Slice}
	\begin{itemize}
		\item [(i)] $L^n $ is a connected  ind-scheme of ind-finite type and formally smooth.
		\item [(ii)]
		The  $\mathbb G_m$-action $\rho_s$ on $W^n$ restricts to a
		$\mathbb G_m$-action 
		on $L^n$, contrating it to the unique fixed point $n$.
		
		\item [(iii)] The action $m:I_0\times L^n\rightarrow L^\theta X$ 
			is formally smooth.	
		\item [(iv)] 
		Let $n=n_v$.
		We have $L^n\cap \cO_v=\{n\}$
		and 
		$L^n\subset L^\theta X$ is transversal to $\cO_v$ at $n$. 
		
		\item [(v)] All the $\bGm$-actions $\rho_s^v$ 
		preserve all the $I_0$-orbits $\cO_u$. 
		Any $I_0$-equivariant local system on  $\cO_u$ is $\bGm$-equivariant with respect to the action $\rho_s^v$. 
	\end{itemize}
\end{prop}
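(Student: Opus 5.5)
The plan is to follow the spherical case, Lemma \ref{slice spherical}, replacing the affine Grassmannian slice $W^x$ by the affine flag slice $W^n$. The inputs are the facts on $W^n$ from \cite[Lemma 53]{CYUntwisted}, Lemma \ref{inv of W^n} and Lemma \ref{G_m action on AF slices}.

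For (i), the facts on $W^n$ give that $I_0^{--}\cap\Ad_n I_0^{--}$ is a formally smooth, connected ind-scheme of ind-finite type. It is stable under the involution $\on{inv}\circ\psi_n$ by Lemma \ref{inv of W^n}. Its fixed points are then formally smooth, since we are in characteristic $p\neq2$ and \cite[Lemma 97]{CYUntwisted} applies. Connectedness follows from (ii): a fixed-point locus that is $\bGm$-contracted to the single point $n$ is connected. For (ii), Lemma \ref{G_m action on AF slices}(i) says $\phi_s$ commutes with $\on{inv}\circ\theta$, so the contracting action $\rho_s$ on $W^n$ preserves $L^n$. It contracts $L^n$ to $n$ because it already contracts all of $W^n$ to $n$.

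For (iii), I use the involution trick from Lemma \ref{slice spherical}(iii). The standard fact is that $m:I_0\times W^n\times I_0\to LG$ is formally smooth with surjective differential. Let $\sigma=\on{inv}\circ\theta$ and define $\gamma(g_1,a,g_2)=(\sigma(g_2),\sigma(a),\sigma(g_1))$. This is an involution because $\theta(I_0)=I_0$, and $m$ intertwines $\gamma$ with $\sigma$. The $\gamma$-fixed locus is $I_0\times L^n$, embedded via $(g,a)\mapsto(g,a,\sigma(g))$, and on it $m$ is the twisted action. A surjective linear map intertwining two involutions is surjective on $(+1)$-eigenspaces, since $2$ is invertible. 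So the differential $T(I_0\times L^n)\to T(L^\theta X)$ is surjective, which gives formal smoothness.

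For (iv), we have $W^n\cap LG_w=\{n\}$ inside $LG$. This gives $L^n\cap\cO_v\subset W^n\cap I_0nI_0=\{n\}$. For transversality I would compare the tangent spaces
\[
T_n\cO_v=R_{n*}\{X-\td\psi_n(X)\mid X\in\Lie I_0\},\qquad T_nL^n=R_{n*}\bigl(\Lie I_0^{--}\cap\Ad_n\Lie I_0^{--}\bigr)^{-\td\psi_n}.
\]
By (iii) these span $T_n L^\theta X$. The sum is direct by the root-space decomposition $L\fg=\Lie I_0\oplus(\Ad_n\Lie I_0\cap\Lie I_0^{--})\oplus(\Lie I_0^{--}\cap\Ad_n\Lie I_0^{--})$. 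Directness needs a little care because $\Lie I_0$ and $\Lie I_0^{--}$ together miss nothing but overlap in no torus part, so I have to check that the torus direction is accounted for correctly.

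For (v), write $\phi_s(\gamma)=a_s\,r_s(\gamma)\,\theta(a_s)^{-1}$ up to a torus factor, where $r_s(\gamma)(t)=\gamma(s^{-2m}t)$ and $a_s=s^{\mu-m\theta_0(\lambda)}$. This is where the choice $\mu\in X_*(T_0^{\psi_{\bar n}})$ enters. Applying this to $g\cdot n_u$ shows $\phi_s(g\cdot n_u)=g'_s\cdot n_u$ with $g'_s\in I_0$ a torus-modified $r_s(g)$, so every $I_0$-orbit $\cO_u$ is preserved. For equivariance of local systems, I form $I_0\rtimes\bGm$ and find a splitting $s\mapsto(t_s,s)$, with $t_s\in T_0$, of $\Stab_{I_0\rtimes\bGm}(n_u)\to\bGm$. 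It then follows that $\pi_0\Stab_{I_0\rtimes\bGm}(n_u)=\pi_0\Stab_{I_0}(n_u)$.

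The main obstacle is (v): writing down $t_s$ explicitly so that it fixes $n_u$, using that $n_u=\tw g_0$ is a standard Iwahori representative. I would first try $t_s=s^{\nu}$ with $\nu$ built from $\mu$, $\lambda$ and the translation part of $u$, and verify directly that $t_s$ stabilizes $n_u$ modulo the torus part.
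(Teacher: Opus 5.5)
\paragraph{Overall.} Your proposal follows the paper's proof for (i)--(iv): the facts on $W^n$, Lemma~\ref{inv of W^n}, Lemma~\ref{G_m action on AF slices} and \cite[Lemma 97]{CYUntwisted} for (i)--(ii); the involution $(g_1,a,g_2)\mapsto(\sigma(g_2),\sigma(a),\sigma(g_1))$ on $I_0\times W^n\times I_0$ for (iii); and $W^n\cap I_0nI_0=\{n\}$ plus the three-term decomposition of $L\fg$ for (iv). Your worry about the torus direction in (iv) is unnecessary. $\Ad_n$ preserves each $t^k\ft$ and permutes the real affine root spaces, so each graded piece of $\Lie I_0^{--}$ lies in $\Ad_n\Lie I_0$ or in $\Ad_n\Lie I_0^{--}$. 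Also $\psi_n(\Lie I_0)=\Ad_n\Lie I_0$ because $\theta(I_0)=I_0$. Hence $T_n\cO_v\subset\Lie I_0\oplus(\Ad_n\Lie I_0\cap\Lie I_0^{--})$, which meets the third summand trivially.

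\paragraph{The gap in (v).} Part (v) is left unfinished, and the missing step is exactly the one computation the paper's proof of (v) carries out. Without it, your claim that $\phi_s(g\cdot n_u)=g'_s\cdot n_u$ with $g'_s$ a ``torus-modified $r_s(g)$'' is not justified. The point is that $r_s(n_u)=n_us^{-2m\alpha}$ must first be absorbed into a twisted $T_0$-conjugate of $n_u$. Write $n_u=\bar n't^\alpha$. The equation $n_u\theta(n_u)=1$ forces $\Ad_{\bar n'}\alpha=-\theta_0(\alpha)$, and then $s^{m\theta_0(\alpha)}\cdot n_u=n_us^{-2m\alpha}=r_s(n_u)$. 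Since $r_s$ commutes with $\theta$, this gives
\[
\phi_s(h\cdot n_u)=\bigl(s^{\mu-m\theta_0(\lambda)}\,r_s(h)\,s^{m\theta_0(\alpha)}\bigr)\cdot n_u,\qquad h\in I_0,
\]
so $\cO_u$ is preserved. Now take the semidirect product $I_0\rtimes\bGm$ in which $s$ acts on $I_0$ by $g\mapsto\Ad_{a_s}r_s(g)$, with $a_s=s^{\mu-m\theta_0(\lambda)}$. The splitting you were looking for is $s\mapsto(s^{-\mu+m\theta_0(\lambda)-m\theta_0(\alpha)},s)$. It is a homomorphism because constant torus elements are fixed by $r_s$ and by $\Ad_{a_s}$.

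\paragraph{Role of $\mu$.} The condition $\mu\in X_*(T_0^{\psi_{\bar n}})$ plays no role in (v), contrary to what you wrote. The identity $\phi_s(\gamma)=a_s\,r_s(\gamma)\,\theta(a_s)^{-1}$ holds exactly, with no extra torus factor, for every cocharacter $\mu$. Where the condition matters is (ii): one computes $\phi_s(n)=n\,s^{\Ad_{\bar n}^{-1}\mu-\theta_0(\mu)}$, so $n$ is fixed precisely because $\psi_{\bar n}(\mu)=\mu$.
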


\begin{proof}
(i)-(ii): Using Lemma \ref{G_m action on AF slices}, 
the proof is the same as \cite[Lemma 56 (i),(iv)]{CYUntwisted}.

(iii):
Similarly as Lemma \ref{slice spherical}.(iii), 
replacing $L^+G$ with $I_0$. 
More precisely, it is well-known the multiplication map
\[
m:I_0\times W^n\times I_0\rightarrow LG
\]
is formally smooth.
Take $\sigma=(\on{inv}\circ\theta)$-fixed subspaces.
Since $m$ intertwines involutions
$\sigma(g_1,x,g_2)=(\sigma(g_2),\sigma(x),\sigma(g_1))$ on $I_0\times W^n\times I_0$
and $\sigma$ on $LG$.
Thus it induces a surjective differential map on fixed subspaces.

(iv):
$L^n\cap \cO_v=\{n\}$ follows from
$W^n\cap I_0nI_0=\{n\}$.
To show $L^n$ is transversal to $\cO_v$ at $n$, we need to check
\[
T_{n}\cO_v\oplus T_{n}L^n=T_{n}(LG)^{-\psi_n}.
\]
Explicitly, 
\[
T_{n}\cO_v=\{X-\psi_n(X)|X\in \Lie(I_0)\},\quad
T_{n}L^n=(\Lie(I_0^{--})\cap\Ad_{n}\on{Lie}(I_0^{--}))^{-\psi_n}.
\]
It follows from part (v) that the LHS spans RHS.
It only remains to show the LHS is a direct sum,
which follows from the decomposition
\[
L\fg=\on{Lie}(I_0)\oplus(\Ad_n\on{Lie}(I_0)\cap \on{Lie}(I_0^{--}))\oplus
(\on{Lie}(I_0^{--})\cap\Ad_n\on{Lie}(I_0^{--})).
\]
\\

(v): 
Similarly as Lemma \ref{slice spherical}.(iii), 
replacing $L^+G$ with $I_0$. 
We only verify $\rho_s^v$ preserves $\cO_u$.
Assume $n_u=\bar{n'}t^\alpha$, $\Ad_{\bar{n'}}\alpha=-\theta_0(\alpha)$.
Then for $h\in I_0$, 
\[
\rho^v_s(h\cdot_\theta n_u)
=(s^{\mu-m\theta_0(\lambda)}hs^{m\theta_0(\alpha)})\cdot_\theta n_u.
\]
\end{proof}

Denote $S^{n_v}_{n_u}=L^{n_v}\cap\cO_u$,
$S^{n_v}_{\leq u}=L^{n_v}\cap\overline\cO_u$.
By the same proof as \cite[Corollary 59]{CYUntwisted}, we obtain
\begin{cor}\label{c:bar O equi-singular}
Let $v,u\in\sV$. 
	\begin{itemize}
		\item [(i)] 
            The multiplication map 
            $m:I_0\times S^{n_v}_{\leq n_u}\rightarrow\overline\cO_u$ 
            is formally smooth.
        
		\item [(ii)] 
            The $\bGm$-action $\rho_s^v$ acts on $S^{n_v}_{\leq n_u}$, 
            which fixes $n_v$, and contracts $S^{n_v}_{\leq n_u}$ to $n_v$.

            \item [(iii)]
            $S^{n_v}_{n_u}\neq\emptyset$ if and only if $v\leq u$.
	\end{itemize}
\end{cor}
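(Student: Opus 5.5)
The plan is to follow Corollary \ref{c:S^x_y equi-singular} for spherical slices line by line, using Proposition \ref{Iwahori Slice} in place of Lemma \ref{slice spherical} and $I_0$ in place of $L^+G$.

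For (i), I would take the formally smooth action map $m:I_0\times L^{n_v}\to L^\theta X$ from Proposition \ref{Iwahori Slice}(iii). Base change is compatible with formal smoothness. Pulling this map back along the closed embedding $\overline\cO_u\hookrightarrow L^\theta X$ gives $m^{-1}(\overline\cO_u)$, and by $I_0$-invariance of $\overline\cO_u$ this is $I_0\times (L^{n_v}\cap\overline\cO_u)=I_0\times S^{n_v}_{\leq n_u}$. The restricted map is therefore formally smooth.

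For (ii), Proposition \ref{Iwahori Slice}(ii) shows that $\rho^v_s$ preserves $L^{n_v}$ and contracts it to $n_v$. Proposition \ref{Iwahori Slice}(v) shows that it preserves every $I_0$-orbit $\cO_u$, and hence its closure $\overline\cO_u$, since the action is continuous. So $\rho^v_s$ preserves $S^{n_v}_{\leq n_u}$. The contraction to $n_v$ is inherited from $L^{n_v}$, because $S^{n_v}_{\leq n_u}$ is closed in $L^{n_v}$. Finally, $n_v$ is the unique fixed point of $L^{n_v}$, so it is the only possible fixed point of the subspace.

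For (iii), first suppose $S^{n_v}_{n_u}\neq\emptyset$ and pick a point $z$ in it. The orbit $\rho^v_s(z)$ stays in $\cO_u$, and its limit as $s\to0$ is $n_v$, so $n_v\in\overline\cO_u$. This gives $\cO_v\subset\overline\cO_u$, i.e. $v\le u$. Conversely, suppose $v\le u$. Then $n_v\in S^{n_v}_{\leq n_u}$, so the source of $m:I_0\times S^{n_v}_{\leq n_u}\to\overline\cO_u$ is nonempty. I expect this step to be the main obstacle: showing that the image of $m$ meets the open dense orbit $\cO_u$. Formal smoothness alone does not give openness of the image in the infinite-type setting. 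I would use placidness: by Theorem \ref{Placidness of orbits}, $\overline\cO_u$ is an irreducible placid scheme in which $\cO_u$ is an fp open dense subscheme. I would write both sides as limits of finite-type quotients by congruence subgroups $I_0^{(k)}$, with smooth affine transition maps. At a finite level, $m$ descends to a smooth map of finite-type schemes, whose image is open and nonempty in the irreducible quotient of $\overline\cO_u$. That open image must then meet the image of the dense open $\cO_u$. Pulling back shows that $m^{-1}(\cO_u)=I_0\times S^{n_v}_{n_u}$ is nonempty, which gives $S^{n_v}_{n_u}\neq\emptyset$.
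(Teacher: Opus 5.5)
Your proposal is correct and is essentially the paper's argument, namely the proof of Corollary \ref{c:S^x_y equi-singular} transported from $L^+G$ to $I_0$: base change of the formally smooth map of Proposition \ref{Iwahori Slice}(iii), contraction by $\rho^v_s$ using that it preserves each $\cO_u$, and a placid-presentation argument showing that the image of $m$ meets the dense open orbit $\cO_u$. One correction to your finite-level step: take the finite-type models of $\overline\cO_u$ from its placid presentation in Theorem \ref{Placidness of orbits} (via $L^\theta X\cong LY$), not from quotients by congruence subgroups $I_0^{(k)}$, because these subgroups act by $\theta$-twisted conjugation with infinite-dimensional stabilizers $I_0^{(k)}\cap(LG)^{\psi_x}$, so those quotients are not finite-type schemes.
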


\section{Twisted Affine Hecke modules}\label{Affine Hecke modules}
\subsection{Twisted Affine Lusztig-Vogan modules}
\subsubsection{}
We assume $G$, $B_0$, $T_0$, $\theta$ are defined over 
a finite field $\mathbb F_q$ and $T_0$ is split 
over $\mathbb F_q$. 
Let $\sD$ be the set of pairs 
$(\xi,v)$ where $v\in\sV$ and 
$\xi$ is an $I_0$-equivariant local system on 
the  $I_0$-orbit $\cO_v\subset L^\theta X$.
Let $M$ be the free $\mathbb Z[q^{\frac{1}{2}},q^{\frac{-1}{2}}]$-module with basis 
indexed by $\sD$.
In this section we endow 
$M$ with a module structure over the affine Hecke algebra of $G$
and an anti-linear involution $D_\delta:M\to M$ 
compatible with the module structure.
In view of Theorem \ref{Placidness of orbits} and
Proposition \ref{p:dim theory},
most arguments are the same as 
the untwisted case \cite[\S8]{CYUntwisted}.
We will focus on formulating the statements 
and making necessary adjustment.

\subsubsection{}
Denote by $\pi_{T_0}:I_0\rightarrow I_0/I_0^+\cong T_0$ the quotient map.
For an orbit $\cO_v$,
note that $T_0$ is stable under the involution $\psi_v=\Ad_{n_v}\circ\theta$.
Thus $\pi_{T_0}(I_0\cap(LG)^{\psi_v})=T_0^{\psi_v}$.
Denote $T_v:=T_0^{\psi_v}$.
The $T_0$-equivariant rank one local systems on $T_0/T_v$ are given by
characters $X(\pi_0(T_v))$.
For $\xi\in X(\pi_0(T_v))$,
denote the corresponding local system on $T_0/T_v$ by $\cL_\xi$.

As in the untwisted case \cite[Lemma 71]{CYUntwisted},
the $I_0$-equivariant local systems on an orbit $\cO_v$ are
direct sums of rank one local systems of the form
$\cL_{\xi,v}:=\pi_v^*\cL_\xi$, $\xi\in X(\pi_0(T_v))$,
where 
$\pi_v:\cO_v=I_0\cdot n_v\cong I_0/I_0\cap(LG)^{\psi_v}
\rightarrow T_0/T_v$.

For the rest of the paper, 
we will fix a dimension theory $\delta$ on $L^\theta X$
which exists by Proposition \ref{p:dim theory}.
Denote by 
$D(I_0\backslash L^\theta X)$
the $I_0$-equivaraint derived category of $L^\theta X$
and 
$\on{Perv}_\delta(I_0\backslash L^\theta X)$
the corresponding category of perverse sheaves as in \cite[Appendix A]{CYUntwisted}.
We define the affine Hecke action 
of the affine Hecke category $D(I_0\backslash LG/I_0)$
and Verdier duality $\mathbb D_\delta$ 
on $D(I_0\backslash L^\theta X)$ 
in the same way as the untwisted case \cite[\S8.1]{CYUntwisted}.
We also similarly define abelian categories of Weil sheaves
$\on{Shv}(I_0\backslash LG/I_0)^{\on{Weil}}$,
$\on{Shv}(I_0\backslash L^\theta X)^{\on{Weil}}$,
$\on{Perv}(I_0\backslash LG/I_0)^{\on{Weil}}$
and $\on{Perv}_\delta(I_0\backslash L^\theta X)^{\on{Weil}}$,
and denote the Grothendieck groups of their subcategories of
objects with Frobenius eigenvalues in $q^{\frac{\bZ}{2}}$
up to roots of unity by
$H,M,H',M'$ respectively.
The convolution action makes $M$ into a $H$-module 
(resp. $M'$ into a $H'$-module).
The Verdier duality induces involutions $D$ on $H,H'$
and $D_\delta$ on $M,M'$.

For any element $w$ of the extended Weyl group $\widetilde{W}$,
let $j_w:LG_w=I_0wI_0\rightarrow LG$ be embedding.
For constant local systems $\cL_w$ on $LG_w$,
$j_{w,!}\cL_w$ define a $\bZ[q^{\frac{1}{2}},q^{-\frac{1}{2}}]$-basis $[\cL_w]$ of $H$,
and their IC-extensions define a $\bZ[q^{\frac{1}{2}},q^{-\frac{1}{2}}]$-basis $[\IC_w]$ of $H'$.
Similarly, for $I_0$-orbits $j_v:\cO_v\rightarrow L^\theta X$,
the rank one $I_0$-equivariant local systems $\cL_{\xi,v}$
define a $\bZ[q^{\frac{1}{2}},q^{-\frac{1}{2}}]$-basis 
$[\cL_{\xi,v}]$ of $M$,
and their IC-extensions 
$\IC_{\xi,v}=j_{v,!*}\cL_{\xi,v}[\delta(v)]$
define a $\bZ[q^{\frac{1}{2}},q^{-\frac{1}{2}}]$-basis
$[\IC_{\xi,v}]$ of $M'$.
The map $[\cA]\to \sum_{i\in\mathbb Z} (-1)^i[\sH^i(\cA)]$
induces bijections $H'\cong H$ and $M'\cong M$.

\subsubsection{}
Let $\alpha$ be a simple affine root and $\cO_v$ an $I_0$-orbit.
In the case of $\cO_v$ being of type IIIb or IVb for $s_\alpha$,
let $\cO_{v'}\neq \cO_v$ be the closed orbit in $P_\alpha n_v$
as in \cite[Lemma 28]{CYUntwisted}. 
Define the map $a_{v'}:
\hat{X}(T_0/T_v)\rightarrow
\hat{X}(T_0/T_v\cap\ker(\alpha))\rightarrow
\hat{X}(\bGm)$ as in \S8.3 of \emph{loc. cit.},
where $\hat{X}(T)=X^*(T)\otimes_\bZ(\bZ_{(p)}/\bZ)$ 
for any torus $T$.

The action of $[\cL_{s_\alpha}]\in H$
on $M$ is given by the same formula as 
\cite[Proposition 74]{CYUntwisted}.
The action of $[\cL_w]$ for $w\in\Omega$ is given by
the same formula as Lemma 75 in \emph{loc. cit.},

\subsection{The polynomials $b_{\eta,u;\xi,v}$ and $c_{\eta,u;\xi,v,i}$}\label{polynomials b and c}

Consider the partial ordering on $I_0$-orbits by closure relation
and write $u<v$ if $\cO_{u}\subset\overline{\cO}_{v}$
and $u\neq v$.

We have 
$\mathbb D_\delta(\IC_{\xi,v})\cong\IC_{-\xi,v}(\delta(v))$
and $\mathbb D_\delta(j_{v,!}\cL_{\xi,v})\cong j_{v,*}\cL_{-\xi,v}[2\delta(v)](\delta(v))$.
It follows that
\begin{equation}\label{eq:b coeff}
	D_\delta[\cL_{\xi,v}]=q^{-\delta(v)}[\cL_{-\xi,v}]+
	\sum_{u<v}b_{\eta,u;\xi,v}[\cL_{\eta,u}]\in M
\end{equation}
and 
\begin{equation}\label{eq:c coeff}
	[\sH^i\IC_{\xi,v}]=\delta_{i,-\delta(v)}[\cL_{\xi,v}]+
	\sum_{u<v}c_{\eta,u;\xi,v,i}[\cL_{\eta,u}]\in M
\end{equation}
where $b_{\eta,u;\xi,v},c_{\eta,u;\xi,v,i}\in\mathbb Z[q^{\frac{1}{2}},q^{-\frac{1}{2}}]$.
The polynomials $b_{\eta,u;\xi,v}$ can be viewed as 
another affine analogue of the $R_{\gamma,\delta}$ in \cite{LV}
beside the untwisted version,
and the $c_{\eta,u;\xi,v,i}$ are related to the 
twisted affine Kazhdan-Lusztig-Vogan polynomials in 
Section \ref{AKLV}.

Apply the bijection $h:M'\cong M$ to equality
$D_\delta[\IC_{\xi,v}]=q^{-\delta(v)}[\IC_{-\xi,v}]$
and plug in the relations \eqref{eq:b coeff}, \eqref{eq:c coeff}.
By comparing coefficients of $[\cL_{\eta,u}]$ for $u<v$, we obtain
\begin{equation}\label{eq:b c coeff relation}
	\begin{split}
		&\sum_i(-1)^ic_{\eta,u;-\xi,v,i}
		-q^{\delta(v)-\delta(u)}
		\sum_i(-1)^i\bar{c}_{-\eta,u;\xi,v,i}\\
		&=(-1)^{\delta(v)}q^{\delta(v)}b_{\eta,u;\xi,v}
		+q^{\delta(v)}\sum_{u<z<v}b_{\eta,u;\zeta,z}
		\sum_i(-1)^i\bar{c}_{\zeta,z;\xi,v,i}
	\end{split}
\end{equation}
where 
$\bar{c}\in\mathbb Z[q^{\frac{1}{2}},q^{\frac{-1}{2}}]$ means inverting $q^{\frac{1}{2}}$.

By the same proof as \cite[Lemma 76]{CYUntwisted}, we have
\begin{lem}\label{l:c coeff}
Assume $c_{\eta,u;\xi,v,i}\in\bN q^{\frac{1}{2}(i+\delta(v))}$.
	Then the coefficients $c_{\eta,u;\xi,v,i}$'s
		can be expressed as a product $a\prod_jb_j$ 
		where $a\in\bZ q^\bZ$,
		$b_j$ or $b_j^{-1}$ is a monomial of $q^{\frac{1}{2}}$
		in some $b_{\eta,u;\xi,v}$.
\end{lem}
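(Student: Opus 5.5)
We will argue by induction on the pair $(u,v)$ with respect to the closure order, using the relation \eqref{eq:b c coeff relation} to solve for the $c$'s in terms of the $b$'s. The key point is that, under the purity/parity hypothesis $c_{\eta,u;\xi,v,i}\in\bN q^{\frac12(i+\delta(v))}$, each $c_{\eta,u;\xi,v,i}$ is determined by the single alternating sum $P_{\eta,u;\xi,v}:=\sum_i(-1)^ic_{\eta,u;\xi,v,i}$. Indeed, distinct degrees $i$ contribute distinct powers $q^{\frac12(i+\delta(v))}$, so no cancellation occurs. Moreover, the sign $(-1)^i$ is determined by the exponent of $q^{\frac12}$ (up to the fixed parity shift $\delta(v)$). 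Thus it suffices to express $P$ in terms of the $b$'s; the statement is then an expression for each monomial coefficient of $P$.

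First we fix $v$ and induct downward on $u<v$ (i.e.\ on $\delta(v)-\delta(u)$, using that the interval $u<z<v$ is finite). We rewrite \eqref{eq:b c coeff relation} as
\[
P_{\eta,u;-\xi,v}-q^{\delta(v)-\delta(u)}\overline{P}_{-\eta,u;\xi,v}=(-1)^{\delta(v)}q^{\delta(v)}b_{\eta,u;\xi,v}+q^{\delta(v)}\sum_{u<z<v}b_{\eta,u;\zeta,z}\overline{P}_{\zeta,z;\xi,v}.
\]
By induction the right-hand side is known in terms of $b$'s. Next we use the standard degree bound for IC stalks: $\sH^i\IC_{\xi,v}|_{\cO_u}=0$ unless $i<-\delta(u)$. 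Hence $P_{\eta,u;-\xi,v}$ involves only powers $q^{k/2}$ with $k<\delta(v)-\delta(u)$. Correspondingly, $q^{\delta(v)-\delta(u)}\overline{P}_{-\eta,u;\xi,v}$ involves only powers $q^{k/2}$ with $k>\delta(v)-\delta(u)$. The two terms on the left therefore have disjoint supports, and $P_{\eta,u;-\xi,v}$ is recovered as the truncation of the right-hand side to low degrees. This is the usual Kazhdan--Lusztig separation argument, carried out exactly as in \cite[Lemma 76]{CYUntwisted}.

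Unwinding the recursion, each monomial coefficient of $c_{\eta,u;\xi,v,i}$ is a signed monomial extracted from a product of $b$'s and of $\bar c$'s for intermediate pairs. The latter are themselves, by induction, of the claimed form; note that inverting $q^{\frac12}$ sends a monomial $b_j$ to $b_j^{-1}$-type factors, which explains the allowance of $b_j^{-1}$. The prefactors $(-1)^{\delta(v)}q^{\delta(v)}$ give the factor $a\in\bZ q^{\bZ}$.

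The main obstacle is making precise that a single monomial of $c$ factors as a product rather than a sum of such products. For this we would rely on the positivity hypothesis together with the description of the $b$'s via the Hecke action formulas of \cite[Proposition 74, Lemma 75]{CYUntwisted}, which are monomial in each step, exactly as in the untwisted case.
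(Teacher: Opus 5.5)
Your main line is the paper's argument. The paper proves the lemma by deferring to \cite[Lemma 76]{CYUntwisted}, which is the Mars--Springer-style induction you describe: induct on $\delta(v)-\delta(u)$ through \eqref{eq:b c coeff relation}. The hypothesis $c_{\eta,u;\xi,v,i}\in\bN q^{\frac12(i+\delta(v))}$ together with the support condition ($\sH^i\IC_{\xi,v}|_{\cO_u}=0$ for $i\geq-\delta(u)$) places the two terms on the left in exponents $<\frac12(\delta(v)-\delta(u))$ and $>\frac12(\delta(v)-\delta(u))$ respectively. So each $c_{\eta,u;-\xi,v,i}$ is read off, with sign $(-1)^i$, from the right-hand side. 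Your degree computations for this separation are correct.

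Your last paragraph should be discarded: its justification is false, and the obstacle it addresses is not real.
\begin{itemize}
\item The $b$'s are not monomials. The type IIb recursion \eqref{eq:IIb induction} already multiplies by $q^{-1}[\cL_{s_\alpha}]+q^{-1}-1$ (compare the classical $R$-polynomials such as $(q-1)^2$).
\item Positivity of $c$ says nothing about how many products contribute to a given coefficient of the right-hand side.
\end{itemize}
What the induction actually gives is that each $c_{\eta,u;\xi,v,i}$ is a sum of monomials $a\prod_j b_j$ of the stated shape. The bars on the $\bar c$'s in the recursion negate exponents, which is what the statement's ``$b_j^{-1}$'' is recording. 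This suffices, for two reasons:
\begin{itemize}
\item Since $c$ is a single monomial by hypothesis, if $c\neq0$ its exponent must equal the exponent of at least one of these products.
\item It is all that Theorem~\ref{t:IC parity vanishing}(i) uses. By Lemma~\ref{degree bound}, every monomial of every $b$ has integral $q$-exponent, so the whole sum lies in $\bZ[q,q^{-1}]$, and $c$ vanishes when $i+\delta(v)$ is odd.
\end{itemize}
Note also that the statement is vacuous at integral exponents: take $a=c$ and the empty product. Its only content is this exponent bookkeeping, which your induction already delivers.
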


\subsection{An algorithm}\label{ss:algorithm}
We have a parallel algorithm that computes $b_{\eta,u;\xi,v}$
as in the untwisted case \cite[\S8.5]{CYUntwisted}.
Since the arguments are the same as the untwisted case,
we only review the algorithm.
By Lemma \ref{l:c coeff} and Theorem \ref{purity} to be proved later,
this also provides a way to compute $c_{\eta,u;\xi,v,i}$
using \eqref{eq:b c coeff relation}.

Let $\cO_v$ be an $I_0$-orbit,
$\xi\in X(\pi_0(T_0))$.
If $\cO_v=\overline{\cO}_v$ is closed,
clearly $b_{\eta,u;\xi,v}=0$.
Otherwise by Lemma 31 of \emph{loc. cit.},
there exists simple affine root $\alpha$
such that $\cO_v$ is of type b for $s_\alpha$.
We can take induction on $\dim\cO_v$.

Case 1: $\cO_v$ is of type IIb for some $s_\alpha$.
By Lemma 28 of \emph{loc. cit.},
$P_\alpha\cO_v=\cO_v\sqcup\cO_{v'}$ 
in which $\cO_v$ is open.
As in \S8.5.1 of \emph{loc. cit.}, we have
\begin{equation}\label{eq:IIb induction}
	D[\cL_{\xi,v}]=(q^{-1}[\cL_{s_\alpha}]+q^{-1}-1)D[\cL_{s\xi,v'}].
\end{equation}
Using induction hypothesis and Proposition 74 of \emph{loc. cit.},
we can compute $b_{\eta,u;\xi,v}$ for any $u<v$.

Case 2: $\cO_v$ is not of type IIb for any $s_\alpha$.
Let $J$ be the set of simple affine roots such that
$\cO_v$ is of type IIIb or IVb for $s_\alpha$, $\alpha\in J$.
As in \S8.5.2 of \emph{loc. cit.},
$J$ is a proper subset of simple affine roots,
and the associated standard parahoric subgroup $P=P_J$
is $\psi_v$-stable with 
$\psi_v$-stable pro-unipotent radical $P^+$ 
and Levi subgroup $L_P$. 
By the same proof as Lemma 77 of \emph{loc. cit.},
we have $P\cdot n_v=\overline{\cO}_v$, 
so that we can define a quotient map
\[
\pi:\overline{\cO}_v=P\cdot n_v
\cong P/P^{\psi_v}\rightarrow L_P/L_P^{\psi_v}.
\]
It induces a bijection between $I_0$-orbits on $\overline{\cO}_v$
to the orbits of $B_{L_P}:=I_0\cap L_P$ on $X_{L_P}:=L_P/L_P^{\psi_v}$. 
Here $X_{L_P}$ is a finite dimensional symmetric variety,
whose set of $B_{L_P}$-orbits are parametrized by 
$\{\pi u\mid u\leq v\}$.
Denote by $j_{\pi u}:\cO_{\pi u}\hookrightarrow X_{L_P}$ 
the $B_{L_P}$-orbit corresponding to $\pi u$, 
and $\delta(\pi u)=\dim\cO_{\pi u}$.
By the same proof as Lemma 78 of \emph{loc. cit.} 
we have the following reduction to the finite dimensional case:
\begin{lem}\label{reduction to finite case}
	\begin{itemize}
		\item [(i)]
		$\pi^*\cL_{\eta,\pi u}=\cL_{\eta,u}$,\ $u\leq v$, 
            $\eta\in\hat{X}(T_0/T_u)$,
		
		\item [(ii)]
		$\IC_{\xi,v}\cong\pi^*\IC_{\xi,\pi v}[\delta(v)-\delta(\pi v)]$,
		
		\item [(iii)]
		$\mathbb D_\delta (j_{v,!}\cL_{\xi,v})\cong\pi^*(\mathbb D(j_{\pi v,!}\cL_{\xi,\pi v}))\langle2\delta(v)-2\delta(\pi v)\rangle$.
	\end{itemize}
\end{lem}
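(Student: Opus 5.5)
The key geometric input is that $\pi:\overline{\cO}_v=P\cdot n_v\cong P/P^{\psi_v}\to L_P/L_P^{\psi_v}=X_{L_P}$ is a smooth surjective morphism with contractible fibers. I will first establish this. Write $P=L_P\ltimes P^+$ with both factors $\psi_v$-stable, so that $P^{\psi_v}=L_P^{\psi_v}\ltimes (P^+)^{\psi_v}$. Then $\pi$ is the quotient map
\[
P/P^{\psi_v}\to P/(P^+\cdot P^{\psi_v})\cong L_P/L_P^{\psi_v}.
\]
Its fibers are isomorphic to $P^+/(P^+)^{\psi_v}$. Since $p\neq 2$, the map $\tau$ identifies this quotient with $(P^+)^{\on{inv}\circ\psi_v}$. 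As $P^+$ is pro-unipotent, this is an (infinite-dimensional) affine space, pro-unipotently contractible. So $\pi$ is a placid, smooth morphism with affine-space fibers, and it is $I_0$-equivariant. It also induces the stated bijection on orbits: $\cO_u=\pi^{-1}(\cO_{\pi u})$.

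\textbf{Part (i).} The $I_0$-action on $\overline{\cO}_v$ factors through $B_{L_P}$ on the base via $I_0\to I_0/P^+\cong B_{L_P}$. The stabilizer tori are compatible: $\psi_v$ and $\psi_{\pi u}$ agree on $T_0\subset L_P$, so $T_u=T_{\pi u}$. Moreover, the projections $\pi_u:\cO_u\to T_0/T_u$ and $\pi_{\pi u}:\cO_{\pi u}\to T_0/T_u$ satisfy $\pi_{\pi u}\circ\pi=\pi_u$, because both are induced by $I_0\to T_0$ and $B_{L_P}\to T_0$. Pulling back $\cL_\eta$ gives $\pi^*\cL_{\eta,\pi u}=\cL_{\eta,u}$.

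\textbf{Part (ii).} Smooth pullback with contractible fibers commutes with intermediate extension up to shift. Hence $\pi^*\IC_{\xi,\pi v}$ is the IC extension of $\pi^*\cL_{\xi,\pi v}=\cL_{\xi,v}$, suitably shifted. The shift is fixed by the normalization $\IC_{\xi,v}=j_{v,!*}\cL_{\xi,v}[\delta(v)]$ against $\IC_{\xi,\pi v}=j_{\pi v,!*}\cL_{\xi,\pi v}[\dim\cO_{\pi v}]$. This gives the shift $[\delta(v)-\delta(\pi v)]$, with $\delta(v)-\delta(\pi v)$ playing the role of the relative dimension in the $\delta$-normalized perverse t-structure. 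I will justify this using the definition of $\on{Perv}_\delta$ via placid presentations in \cite[Appendix A]{CYUntwisted}: $\delta$ restricted to $\overline{\cO}_v$ differs from $\pi^*\dim$ by the constant $\delta(v)-\delta(\pi v)$, since $\delta$ is locally constant on the irreducible $\overline{\cO}_v$.

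\textbf{Part (iii).} Use base change $\pi^*j_{\pi v,!}=j_{v,!}\pi^*$ (from the Cartesian square of orbits) together with part (i). Then $j_{v,!}\cL_{\xi,v}=\pi^*j_{\pi v,!}\cL_{\xi,\pi v}$. For smooth $\pi$ of (renormalized) relative dimension $d=\delta(v)-\delta(\pi v)$, the relevant identity is $\mathbb D_\delta\pi^*\cong\pi^*\mathbb D\langle 2d\rangle$. This is the $\delta$-version of $\mathbb D\pi^*=\pi^!\mathbb D=\pi^*\mathbb D[2d](d)$, again via \cite[Appendix A]{CYUntwisted}.

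\textbf{Main obstacle.} I expect the main difficulty to be the bookkeeping of shifts and twists in the infinite-dimensional placid setting. Concretely, one must verify that $\pi$ is a placid smooth morphism whose "relative dimension" with respect to the dimension theory $\delta$ is exactly $\delta(v)-\delta(\pi v)$. I would check this by truncating: replace $P^+$ by a finite-dimensional quotient $P^+/P^{+}_{n}$ stable under $\psi_v$, and compare dimensions there, exactly as in \cite[Lemma 78]{CYUntwisted}.
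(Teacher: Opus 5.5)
Your proposal is correct and follows essentially the same route as the paper, which simply invokes the proof of the untwisted Lemma 78 of \cite{CYUntwisted}. As in your sketch, $\pi$ is a pro-smooth, $I_0$-equivariant fibration with affine-space fibers $P^+/(P^+)^{\psi_v}$; hence pullback matches the equivariant local systems, commutes with intermediate extension, and satisfies $\mathbb D_\delta\pi^*\cong\pi^*\mathbb D\langle 2\delta(v)-2\delta(\pi v)\rangle$.

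One slip in (i): the involutions to compare on $T_0$ are $\psi_u$ and the involution attached to $\pi(n_u)$, not $\psi_v$ itself. Writing $n_u=h\cdot n_v$ with $h\in P$, both come from $\Ad_h\circ\psi_v\circ\Ad_h^{-1}$ via the $\psi_v$-equivariant projection $P\to L_P$.
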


In view of the above, 
the computation of $b_{\eta,u;\xi,v},c_{\eta,u;\xi,v,i}$
can be reduced to the finite dimensional symmetric variety $X_{L_P}$,
which has been done in \cite{MS}.
By the same proof of Lemma 79 of \emph{loc. cit.}, we obtain
\begin{lem}\label{degree bound}
	$q^{\delta(v)}b_{\eta,u;\xi,v}$
        is a polynomial in $q$ with coefficients in $\mathbb Z$ of degree at most $\delta(v)-\delta(u)$.
\end{lem}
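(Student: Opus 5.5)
We argue by induction on $\delta(v)$, following the algorithm of \S\ref{ss:algorithm}, and at each step track the degree in $q$ of $q^{\delta(v)}b_{\eta,u;\xi,v}$. If $\cO_v$ is closed there is nothing to prove, since all $b_{\eta,u;\xi,v}=0$. Otherwise, by \cite[Lemma 31]{CYUntwisted}, there is a simple affine root $\alpha$ with $\cO_v$ of type b for $s_\alpha$, and we split into the two cases of the algorithm.

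\emph{Case 2: $\cO_v$ is not of type IIb for any $s_\alpha$.} Take the parahoric $P=P_J$ and the projection $\pi:\overline{\cO}_v\to X_{L_P}$. Lemma \ref{reduction to finite case}(i) and (iii) identify $D_\delta[\cL_{\xi,v}]$ with the pullback of $D[\cL_{\xi,\pi v}]$ on the finite-dimensional symmetric variety $X_{L_P}$. The shift $\langle 2\delta(v)-2\delta(\pi v)\rangle$ contributes exactly the factor $q^{-(\delta(v)-\delta(\pi v))}$. The fibres of $\pi$ have constant dimension, so $\delta(v)-\delta(u)=\delta(\pi v)-\delta(\pi u)$ for $u\le v$. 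Hence $q^{\delta(v)}b_{\eta,u;\xi,v}=q^{\delta(\pi v)}b_{\eta,\pi u;\xi,\pi v}$. The latter is the finite Lusztig--Vogan/Mars--Springer quantity, which \cite{MS} shows is a polynomial in $q$ with $\mathbb Z$-coefficients of degree at most $\dim\cO_{\pi v}-\dim\cO_{\pi u}$. This gives the bound.

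\emph{Case 1: $\cO_v$ is of type IIb for $s_\alpha$.} Here $P_\alpha\cO_v=\cO_v\sqcup\cO_{v'}$ with $\delta(v')=\delta(v)-1$, and \eqref{eq:IIb induction} gives
\[
D[\cL_{\xi,v}]=(q^{-1}[\cL_{s_\alpha}]+q^{-1}-1)D[\cL_{s\xi,v'}].
\]
By induction, $D[\cL_{s\xi,v'}]=\sum_{w\le v'} q^{-\delta(v')}p_w[\cL_{\zeta,w}]$ with $p_w\in\mathbb Z[q]$ of degree at most $\delta(v')-\delta(w)$. We then apply the explicit formulas of \cite[Proposition 74]{CYUntwisted} for $[\cL_{s_\alpha}]\cdot[\cL_{\zeta,w}]$. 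In every type (I, II, III, IV, a or b) these have the form $\sum_{w'} c_{w'}[\cL_{\zeta',w'}]$, where $w'$ ranges over orbits in $P_\alpha\cO_w$, $c_{w'}\in\mathbb Z[q]$, and $\deg c_{w'}\le 1+\delta(w)-\delta(w')$.

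Multiplying by $q^{\delta(v)}=q^{\delta(v')+1}$, the coefficient of each $[\cL_{\eta,u}]$ becomes a combination of terms $p_w c_u$ and $(1-q)p_w$. These are polynomials of degree at most
\[
(\delta(v')-\delta(w))+(1+\delta(w)-\delta(u))=\delta(v)-\delta(u)
\]
in the first case, and $1+\delta(v')-\delta(w)=\delta(v)-\delta(w)$ with $u=w$ in the second. This is the required bound.

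The main obstacle is the uniform degree check $\deg c_{w'}\le 1+\delta(w)-\delta(w')$ across the seven types in Proposition 74. It also requires that $\delta$ behaves as dimension under $P_\alpha$-saturation, meaning $\delta(w')-\delta(w)\in\{0,\pm1\}$ as dictated by the type. The latter follows from $I_0\backslash P_\alpha\cO_w\cong\bP^1/H_{\alpha,w}$ and the fact that $\delta$ is a dimension theory (Proposition \ref{p:dim theory}). The former is a case-by-case verification identical to the untwisted \cite[Lemma 79]{CYUntwisted}, since the Hecke formulas are formally the same.
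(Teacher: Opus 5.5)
Your proposal is correct and follows the paper's own route: the paper proves this by appeal to the untwisted \cite[Lemma 79]{CYUntwisted}. That argument is an induction along the algorithm of \S\ref{ss:algorithm}, which tracks degrees through the IIb recursion \eqref{eq:IIb induction} and the Hecke formulas of \cite[Proposition 74]{CYUntwisted} in Case 1, and uses Lemma \ref{reduction to finite case} to transfer the finite-dimensional bound of \cite{MS} in Case 2.

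Two points to tighten. First, run the induction over the finitely many orbits in $\overline{\cO}_v$ rather than on $\delta(v)$ globally, since $\delta$ need not be bounded below. Second, the identity $\delta(v)-\delta(u)=\dim\cO_{\pi v}-\dim\cO_{\pi u}$ should be read as a statement about relative dimensions via the dimension theory, because the fibres of $\pi$ are infinite-dimensional.
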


\subsubsection{Applications to spherical orbits}\label{sss:G(O) C coeff}
As in the untwisted case, 
the above results for Iwahoric orbits 
also provide an algorithm to compute  
 the multiplicities for $L^+G$-equivariant $\IC$-complexes.
Precisely,
let $\cL_{\chi,\nu}$ be a $L^+G$-equivariant local system 
on $L^+G$-orbit $j_\nu:LX_\nu\hookrightarrow L^\theta X$,
where $\chi$ is a representation of the component group of stabilizers
in $L^+G$ on $L^\theta X_\nu$.
Consider the associated $\IC$-complex
\[
\IC_{\chi,\nu}:=(j_{\nu})_{!*}\cL_{\chi,\nu}[\delta(\nu)]\in\on{Perv}_\delta(L^+G\backslash L^\theta X).
\]
We can write down similar formula as \eqref{eq:c coeff}
in the Grothendieck group of
mixed $L^+G$-constructible sheaves:
\begin{equation}\label{eq:C coeff}
	[\sH^i\IC_{\chi,\nu}]=\delta_{i,-\delta(\nu)}[\cL_{\chi,\nu}]+
	\sum_{\nu'<\nu}C_{\chi',\nu';\chi,\nu,i}[\cL_{\chi',\nu'}]
\end{equation}
where $C_{\chi',\nu';\chi,\nu,i}\in\mathbb Z[q^{\frac{1}{2}},q^{\frac{-1}{2}}]$,
and each $\cL_{\chi',\nu'}$ is a $L^+G$-equivariant local system on $L^\theta X_\nu$.

Let $h$ be the map from the Grothendieck group of 
mixed $L^+G$-constructible sheaves
into that of mixed $I_0$-constructible sheaves and
denote $h[\cL_{\chi',\nu'}]=\sum_{(\eta,u)\subset(\chi',\nu')}[\cL_{\eta,u}]$,
where $(\eta,u)\subset(\chi',\nu')$ means
$\cO_u\subset\cO_{\nu'}$ and $\cL_{\chi',\nu'}|_{\cO_u}=\cL_{\eta,u}$.
Denote by $\cO_{v_\nu}$ be unique open $I_0$-orbit in $L^\theta X_\nu$
and let $\cL_{\xi_\chi,v_\nu}=\cL_{\chi,\nu}|_{\cO_{v_\nu}}$.
Then 
\begin{equation}\label{IC=IC}
   \IC_{\chi,\nu}=\IC_{\xi_\chi,v_\nu}. 
\end{equation}
By comparing \eqref{eq:c coeff}
with the image under $h$ of \eqref{eq:C coeff},
we see their left hand sides are the same.
The matching of coefficients on the right hand sides provides relation
\begin{equation}\label{eq:c v.s. C}
	c_{\eta,u;\xi_\chi,v_\nu,i}=
	\begin{cases}
		\delta_{i,-\delta(\nu)},\hspace{1.2cm}(\eta,u)\subset(\chi,\nu),\\
		C_{\chi',\nu';\chi,\nu,i},\qquad (\eta,u)\subset(\chi',\nu'),\ \nu'<\nu,\\
		0,\hspace{2.3cm}\text{otherwise}.
	\end{cases}
\end{equation}

\section{Parity vanishing and Poincar\'e polynomials of IC-complexes
}\label{main results}
In this section, we prove the parity vanishing of 
equivariant $\IC$-complexes
on the spherical and Iwahori orbits.
We will first prove for Iwahori orbits
following the strategy of Mars-Springer \cite{MS} 
in the finite dimensional situation,
then deduce the result for spherical orbits.
Along the way, we provide an algorithm 
that computes the Poincar\'e polynomials of 
Iwahori-equivariant $\IC$-complexes
in an inductive procedure.
We keep the same notations and assumptions as 
in \S\ref{Affine Hecke modules}.

\subsection{Purity and parity vanishing}
By the same discussion as in \cite[\S9.1]{CYUntwisted},
for an $I_0$-equivariant local system $\cL_{\chi,v}$ 
on an $I_0$-orbit $\cO_v$,
the IC-complex $\IC_{\xi,v}$ has a canonical Weil structure 
$\Phi=\{\Phi_n,\ \text{for for $n$ divisible by $n_0$}\}$, 
where $\Phi_n:(F^n)^*\IC_{\xi,v}\cong\IC_{\xi,v}$
are systems of isomorphisms  
satisfying $(\Phi_n)^m=\Phi_{nm}$.
We also define the notion of 
$*$-pointwise pure (resp. $!$-pointwise pure)
in the same way as in \emph{loc. cit.}.
We have similar notions of Weil structure and pointwise purity for $\IC_{\chi,\nu}\in\on{Perve}_\delta(G(\cO)\backslash LX)^{{\on{Weil}}}$.

By the same proof as Theorem 81 of \emph{loc. cit.},
we have the following key pointwise purity result.
\begin{thm}\label{purity}
The complex $\IC_{\xi,v}$ (resp. $\IC_{\chi,\nu}$) is pointwise pure of weight $\delta(v)$ (resp. $\delta(\nu)$). 
\end{thm}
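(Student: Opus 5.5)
The plan is to follow the contracting-slice argument of Mars--Springer, as in the untwisted proof of \cite[Theorem 81]{CYUntwisted}, using the slices built in Section~\ref{Transversal slices}.

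\textbf{Step 1: reduce to a finite-dimensional slice.} Fix $u\le v$ and the standard representative $n_u$. By Corollary~\ref{c:bar O equi-singular}(i), the map
\[
m:I_0\times S^{n_u}_{\le n_v}\to\overline\cO_v
\]
is formally smooth. Hence $m^*\IC_{\xi,v}$ is, up to shift and Tate twist, the pullback of $\IC(S^{n_u}_{\le n_v},\cL_{\xi,v}|)$ from the slice. The shift and twist are governed by the dimension theory $\delta$ of Proposition~\ref{p:dim theory}, since $\dim_{S/S'}$ is additive. Placidness (Theorem~\ref{Placidness of orbits}) makes this smooth pullback legitimate in the pro-setting: after passing to a finite-type quotient of $I_0$, the map becomes an honest smooth morphism of finite-type schemes. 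Since $I_0$-equivariance moves any point of $\cO_u$ to $n_u$, the stalks and costalks of $\IC_{\xi,v}$ along $\cO_u$ are identified with those of the slice IC-complex at the fixed point $n_u$. Both identifications are compatible with Frobenius because $n_u$, the slice and the $\bGm$-action are defined over $\mathbb F_q$, after possibly enlarging $q$.

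\textbf{Step 2: contracting $\bGm$-action.} Corollary~\ref{c:bar O equi-singular}(ii) gives an $\mathbb F_q$-rational $\bGm$-action $\rho^u_s$ on the finite-type affine variety $S^{n_u}_{\le n_v}$, contracting it to $n_u$. By Proposition~\ref{Iwahori Slice}(v), this action preserves every $S^{n_u}_{n_w}$, and the local system $\cL_{\xi,v}$ is $\bGm$-equivariant. So the slice IC-complex is $\bGm$-equivariant. Braden's hyperbolic localization / contraction lemma (as used in \cite{MS}) then gives
\[
\IC|_{n_u}\cong R\Gamma(S^{n_u}_{\le n_v},\IC),\qquad i_{n_u}^!\IC\cong R\Gamma_c(S^{n_u}_{\le n_v},\IC).
\]
A pure complex of weight $w$ has $R\Gamma$ of weights $\ge w$ and $R\Gamma_c$ of weights $\le w$. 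Combined with the Deligne/Gabber bounds in the opposite directions for the stalk and costalk at a point, the stalk is $*$-pure of weight $\delta(v)$ and the costalk is $!$-pure of weight $\delta(v)$. This uses that the IC-complex of a pure rank one local system is pure. Purity of $\cL_{\xi,v}$ holds because it is pulled back from a Kummer-type local system on $T_0/T_v$, which is pure of weight $0$ with respect to the canonical Weil structure.

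\textbf{Step 3: spherical case.} By \eqref{IC=IC}, $\IC_{\chi,\nu}=\IC_{\xi_\chi,v_\nu}$. This complex is $I_0$-constructible with $I_0$-strata $\cO_u$, and $\delta(\nu)=\delta(v_\nu)$. So its pointwise purity follows directly from the Iwahori case. Alternatively, one can run the same argument with Corollary~\ref{c:S^x_y equi-singular} and Lemma~\ref{slice spherical}(v).

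\textbf{Main obstacle.} I expect the hardest step to be the bookkeeping in Step~1. One must check that the normalization $\IC_{\xi,v}=j_{!*}\cL_{\xi,v}[\delta(v)]$ relative to $\delta$ matches the naive IC-complex on the finite-dimensional slice, with the correct weight $\delta(v)$. One must also check that the Weil structures agree across the pro-smooth map $m$. Both points should follow as in \emph{loc. cit.} from the $A.4$ formalism of \cite{CYUntwisted} and the relation \eqref{dim_S/S'}.
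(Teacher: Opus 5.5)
Your proposal is correct and is essentially the paper's argument. The paper proves the statement only by saying ``by the same proof as \cite[Theorem 81]{CYUntwisted}'', i.e.\ the Mars--Springer contraction argument. That argument applies Springer's contraction lemma to the $\bGm$-contracting transversal slices $S^{n_u}_{\le n_v}$ of Corollary~\ref{c:bar O equi-singular}, using the $\bGm$-equivariance of local systems from Proposition~\ref{Iwahori Slice}(v), and deduces the spherical case either from the spherical slices or from $\IC_{\chi,\nu}=\IC_{\xi_\chi,v_\nu}$, just as you do.
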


We are ready to prove the parity vanishing of $\IC$-complexes.
\begin{thm}\label{t:IC parity vanishing}
	For any $(\xi,v), (\eta,u)\in\sD$
    or $\nu\in\sN$ with character $\chi$, we have 
	\begin{itemize}
		\item [(i)] 
		$c_{\eta,u;\xi,v,i}\in\bN q^{\frac{1}{2}(i+\delta(v))}$.
		Moreover, $c_{\eta,u;\xi,v,i}=0$ 
		if $i+\delta(v)$ is odd.
		
		\item [(ii)]
		$\sH^i\IC_{\xi,v}=0$ if $i+\delta(v)$ is odd.

        \item[(iii)]
        $\sH^i\IC_{\chi,\nu}=0$ if $i+\delta(\lambda)$ is odd.
	\end{itemize}
\end{thm}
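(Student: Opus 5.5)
We follow the Mars--Springer strategy as in the untwisted case, by induction on $\delta(v)$ (after fixing the connected component, so that closures contain only finitely many orbits). The inductive engine is the case split of \S\ref{ss:algorithm}.

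\textbf{Setup and base case.} If $\cO_v$ is closed, then $\IC_{\xi,v}=j_{v,!}\cL_{\xi,v}[\delta(v)]$. Its only nonzero cohomology sheaf sits in degree $-\delta(v)$ and is pure of weight $0$, so (i) and (ii) hold trivially. Otherwise, by \cite[Lemma 31]{CYUntwisted}, some simple affine root $\alpha$ makes $\cO_v$ of type b for $s_\alpha$, and we split into the two cases of \S\ref{ss:algorithm}.

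\textbf{Case 2.} Suppose $\cO_v$ is not of type IIb for any $s_\alpha$. Lemma \ref{reduction to finite case}(ii) gives $\IC_{\xi,v}\cong\pi^*\IC_{\xi,\pi v}[\delta(v)-\delta(\pi v)]$, where $\pi:\overline\cO_v\to X_{L_P}$ has smooth fibers, and Lemma \ref{reduction to finite case}(i) shows that local systems pull back to local systems. Hence the stalks of $\IC_{\xi,v}$ at points of $\cO_u$ equal the stalks of $\IC_{\xi,\pi v}$ at points of $\cO_{\pi u}$, shifted by $\delta(v)-\delta(\pi v)$. Parity vanishing, and the statement that eigenvalues on $\sH^i$ are $q^{(i+\delta)/2}$ with nonnegative multiplicities, are known for the finite-dimensional symmetric variety $X_{L_P}$ by Mars--Springer \cite{MS}. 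The shift preserves the parity of $i+\delta$, so (i) and (ii) follow.

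\textbf{Case 1.} Suppose $\cO_v$ is of type IIb for $s_\alpha$, so that $P_\alpha\cO_v=\cO_v\sqcup\cO_{v'}$ with $\delta(v')=\delta(v)-1$. Consider the convolution $\pi_\alpha:P_\alpha\times^{I_0}\overline\cO_{v'}\to\overline{P_\alpha\cO_{v'}}=\overline\cO_v$. This is a $\bP^1$-type proper map (placidness makes the pushforward meaningful). Apply the decomposition theorem, which is available thanks to the pointwise purity in Theorem \ref{purity}, to $\pi_{\alpha,*}$ of the twisted product of $\IC_{s\xi,v'}$. The result is $\IC_{\xi,v}$ plus a sum of shifted $\IC_{\eta,u}$ with $u<v$ and shifts of the correct parity; these shifts can be read off from the Hecke formula (Proposition 74 type) applied to \eqref{eq:IIb induction}.

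The stalks of $\pi_{\alpha,*}$ are computed fiberwise over a $\bP^1$, or over points, using the inductive hypothesis for $v'$. Since the cohomology of $\bP^1$ and of points is even and Tate, they satisfy parity with eigenvalues $q^{(i+\delta(v))/2}$ and nonnegative multiplicities. Because the decomposition is a direct sum, every summand inherits these properties; in particular $\IC_{\xi,v}$ does. This gives (i) and (ii). The $\bN$-positivity in (i) comes from purity: the Frobenius on $\sH^i$ at a point has weight $i+\delta(v)$, and all eigenvalues are integral powers of $q^{1/2}$ (Tate), so each contributes $q^{(i+\delta(v))/2}$.

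\textbf{Main obstacle.} The delicate point in Case 1 is the local systems in type IIb with nontrivial $\xi$: we must check that the pushforward is still Tate and that parity of degrees is respected for twisted $\cL$. For this I would use the explicit $H_{\alpha,v}$-description from \cite[Lemma 28]{CYUntwisted}, which reduces the question to $\bGm$- or $\bGa$-quotients of $\bP^1$, as in \cite[Theorem 82]{CYUntwisted}.

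\textbf{Spherical orbits.} Part (iii) follows from \eqref{IC=IC}: $\IC_{\chi,\nu}=\IC_{\xi_\chi,v_\nu}$ with $\delta(\nu)=\delta(v_\nu)$. Applying (ii) to $v_\nu$ gives the claim (reading $\delta(\lambda)$ as $\delta(\nu)$).
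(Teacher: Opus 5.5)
There is a genuine gap, in Case 1. Your closed-orbit base case, Case 2 (via Lemma \ref{reduction to finite case} and Mars--Springer), part (iii) via \eqref{IC=IC}, and the derivation of $c_{\eta,u;\xi,v,i}\in\bN q^{\frac12(i+\delta(v))}$ from purity are all fine.

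The failing step is ``the cohomology of $\bP^1$ and of points is even and Tate.'' Take $z\in\cO_u$ and set $K=\bQl[1]\tilde\boxtimes\IC_{s\xi,v'}$. The restriction of $K$ to a fiber $\pi_\alpha^{-1}(z)\cong\bP^1$ is not constant: its stalk at $p$ is the stalk of $\IC_{s\xi,v'}$ at $p^{-1}z$. So it is only constructible along the $H_{\alpha,u}$-orbit stratification of $\bP^1$. Suppose $\cO_u$ is of type III or IV for $s_\alpha$ and $P_\alpha\cO_u\subset\overline\cO_{v'}$, which happens for suitable $u<v'$. Then the stratification is $\{0,\infty\}\sqcup\bGm$, and $H^1_c(\bGm,\sH^{k-1}K)$ produces classes in degree $k$ with $k+\delta(v)$ odd. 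Your inductive hypothesis controls only stalks, and stalks do not determine fiber cohomology. For example, ${\bQl}_{\bP^1}$ and $j_!{\bQl}_{\bGm}\oplus i_*{\bQl}_{\{0,\infty\}}$ have identical stalks, but the second has nonzero $H^1$. So the real obstacle is not the nontrivial $\xi$ you single out; it is present already for trivial local systems.

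The step can be rescued, but only by using weights. Since $\pi_\alpha$ is proper, $\pi_{\alpha*}K$ is pure, so by the decomposition theorem it is a sum of shifted, twisted $\IC$'s. Each of these is pointwise pure by Theorem \ref{purity}, so $\sH^k(\pi_{\alpha*}K)_z$ is pure of weight $k+\delta(v)$. The offending classes from $H^1_c(\bGm,\sH^{k-1}K)$ have weight $k+\delta(v)-1$. They must therefore be killed by the boundary map in the localization sequence for $\bGm\subset\bP^1\supset\{0,\infty\}$. The remaining pure pieces, $H^2_c(\bGm,\sH^{k-2}K)$ and $H^k(\{0,\infty\},K)$, vanish for $k+\delta(v)$ odd by induction.

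Once purity is the essential input, though, the geometric induction is unnecessary, and the paper's argument is much shorter. Purity gives $c_{\eta,u;\xi,v,i}\in\bN q^{\frac12(i+\delta(v))}$, which is exactly the hypothesis of Lemma \ref{l:c coeff}. That lemma writes $c_{\eta,u;\xi,v,i}$ as an element of $\bZ q^{\bZ}$ times factors each of which, or whose inverse, is a monomial occurring in some $b$-polynomial. By Lemma \ref{degree bound}, those monomials involve only integral powers of $q$. This is incompatible with $q^{\frac12(i+\delta(v))}$ when $i+\delta(v)$ is odd, forcing $c_{\eta,u;\xi,v,i}=0$. Part (ii) then follows from \eqref{eq:c coeff}.
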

\begin{proof}
	(i):
	The first claim follows from the $*$-pointwise purity of $\IC_{\xi,v}$ in Theorem \ref{purity}.
	The second vanishing property follows from 
	Lemma \ref{l:c coeff} and Lemma \ref{degree bound}.

	(ii):
	This follows from part (i) and the definition \eqref{eq:c coeff}.

    (iii):
    It follows from (ii) and~\eqref{IC=IC}.	
\end{proof}

\subsection{Twisted affine Kazhdan-Lusztig-Vogan polynomials}\label{AKLV}
For $I_0$-equivariant local systems 
$\cL_{\eta,u}$ and $\cL_{\xi,v}$,
define Poincar\'e polynomial
\begin{equation}\label{eq:Iwahori Poincare poly}	P_{\eta,u;\xi,v}:=\sum_i[\cL_{\eta,u},\sH^i(j_{v,!*}\cL_{\xi,v})|_{\cO_u}]q^{\frac{i}{2}}.
\end{equation}
By Theorem \ref{t:IC parity vanishing}.(i), 
$[\cL_{\eta,u},\sH^i(j_{v,!*}\cL_{\xi,v})|_{\cO_u}]q^{\frac{i}{2}}
=c_{\eta,u;\xi,v,i-\delta(v)}$.
Thus
\[
P_{\eta,u;\xi,v}=\sum_ic_{\eta,u;\xi,v,i-\delta(v)}.
\]
Here we take convention that 
$c_{\xi,v;\xi,v,i}=\delta_{i,-\delta(v)}$, 
and $c_{\eta,u;\xi,v,i}=0$ if
$u$ is not in the closure of $\cO_v$
or $u$ is in the closure of $\cO_v$
but $\cL_{\eta,u}$ is not a subquotient of $\cL_{\xi,v}|_{\cO_{\bar{u}}}$.
Then
\[
h[j_{v,!*}\cL_{\xi,v}]=\sum_{\eta,u}P_{\eta,u;\xi,v}[\cL_{\eta,u}].
\]

By the same proof as \cite[Theorem 84]{CYUntwisted}, we have
\begin{thm}\label{t:I_0 Poincare poly uniqueness}
	For any pair $(\eta,u),(\xi,v)$, 
	$P_{\eta,u;\xi,v}$ is a polynomial in $q$ 
	with non-negative integer coefficients.
	It is the unique family of polynomials in $q^{\frac{1}{2}}$ 
	satisfying the following conditions:
	\begin{itemize}
		\item [(i)] $P_{\xi,v;\xi,v}=1$.
		\item [(ii)] If $\cO_u\neq\cO_v$, 
		$\deg_q(P_{\eta,u;\xi,v})\leq\frac{1}{2}(\delta(v)-\delta(u)-1)$.
		\item [(iii)] $C_{\xi,v}:=\sum_{\eta,u}P_{\eta,u;\xi,v}[\cL_{\eta,u}]$
		satisfies $D_\delta C_{\xi,v}=q^{-\delta(v)}C_{-\xi,v}$.
	\end{itemize}
\end{thm}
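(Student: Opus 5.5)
The plan is to follow the classical Kazhdan--Lusztig uniqueness argument, using Theorem \ref{t:IC parity vanishing} and Theorem \ref{purity} as input.

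\emph{Positivity and integrality.} By Theorem \ref{t:IC parity vanishing}.(i), each $c_{\eta,u;\xi,v,i-\delta(v)}$ lies in $\bN q^{\frac{i}{2}}$ and vanishes unless $i$ is even. This is where $*$-pointwise purity enters: Frobenius acts on stalks of $\sH^{i-\delta(v)}\IC_{\xi,v}$ with weight exactly $i$, so the coefficient records a multiplicity times $q^{i/2}$. Summing over $i$, $P_{\eta,u;\xi,v}$ is a polynomial in $q$ with non-negative integer coefficients.

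\emph{Existence.} I will check (i)--(iii) for the $P_{\eta,u;\xi,v}$ coming from $\IC$-complexes.
\begin{itemize}
\item Condition (i) is the normalization $c_{\xi,v;\xi,v,i}=\delta_{i,-\delta(v)}$.
\item Condition (ii) is the perversity (support) condition for the $\IC$-extension with respect to the dimension theory $\delta$. For $u<v$, $\sH^{j}\IC_{\xi,v}|_{\cO_u}$ vanishes unless $j<-\delta(u)$, i.e.\ $i=j+\delta(v)<\delta(v)-\delta(u)$. Combined with the evenness of $i$ from parity vanishing, this gives $\deg_q P\le\frac12(\delta(v)-\delta(u)-1)$.
\item Condition (iii) follows by applying the bijection $h:M'\cong M$ to $D_\delta[\IC_{\xi,v}]=q^{-\delta(v)}[\IC_{-\xi,v}]$, since $h[\IC_{\xi,v}]$ equals $C_{\xi,v}$ up to the sign and shift conventions fixed above.
\end{itemize}

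\emph{Uniqueness.} Suppose $P'$ is another family satisfying (i)--(iii), and set $C'_{\xi,v}$ accordingly. I argue by induction on $v$ along the closure order; this is legitimate since $\overline{\cO}_v$ contains finitely many orbits. Assume $P'_{\zeta,z;\cdot,\cdot}=P_{\zeta,z;\cdot,\cdot}$ for all $z<v$, which gives $C'_{\zeta,z}=C_{\zeta,z}$ for such $z$. Expanding $C_{\xi,v}-C'_{\xi,v}$ in the basis $\{C_{\eta,u}\}_{u<v}$ (it is unitriangular), write
\[C_{\xi,v}-C'_{\xi,v}=\sum_{u<v}a_{\eta,u}C_{\eta,u}.\]
Condition (iii), together with $D_\delta C_{\eta,u}=q^{-\delta(u)}C_{-\eta,u}$, then forces
\[q^{-\delta(v)}\bar a_{\eta,u}=q^{-\delta(u)}a_{-\eta,u}.\]
On the other hand, condition (ii) bounds the degree of $a_{\eta,u}$ by $\frac12(\delta(v)-\delta(u)-1)$, since it is read off from the top-degree $[\cL_{\eta,u}]$ coefficients. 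A Laurent polynomial satisfying both this symmetry and the degree bound must vanish.

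The main obstacle is the bookkeeping in the uniqueness step. The duality $D_\delta$ sends $\xi\mapsto-\xi$, so the symmetry identity pairs $a_{\eta,u}$ with $a_{-\eta,u}$, and the induction has to handle the pairs $\{\eta,-\eta\}$ simultaneously. One also has to confirm that the degree bound holds in $q^{\frac12}$ with the normalization $q^{-\delta}$, exactly as in \cite[Theorem 84]{CYUntwisted}. Since $\delta$ is only a dimension theory, one should note that only the differences $\delta(v)-\delta(u)$ enter, and these are intrinsic by \eqref{dim_S/S'}.
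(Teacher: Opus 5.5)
Your proposal is correct and follows the same route as the paper, which defers to the proof of \cite[Theorem 84]{CYUntwisted}: positivity and integrality come from pointwise purity and parity vanishing (Theorems \ref{purity} and \ref{t:IC parity vanishing}), and conditions (i)--(iii) come from the normalization, the strict support condition for the intermediate extension, and Verdier duality. Parity vanishing is also what makes $h[j_{v,!*}\cL_{\xi,v}]$ equal to $C_{\xi,v}$ rather than an alternating sum, and uniqueness is the standard Kazhdan--Lusztig triangularity argument.

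One bookkeeping slip: comparing coefficients of $C_{-\eta,u}$ actually gives $q^{-\delta(u)}\overline{a^{\xi}_{\eta,u}}=q^{-\delta(v)}a^{-\xi}_{-\eta,u}$, where $a^{-\xi}$ denotes the coefficients of $C_{-\xi,v}-C'_{-\xi,v}$; your exponents are swapped. Also, the degree bound on $a^{\pm\xi}_{\eta,u}$ holds first at orbits $u$ maximal in the combined support and then by descending induction, after which the disjoint exponent ranges force vanishing as you say.
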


\subsection{Twisted Relative 
Kostka-Foulkes polynomials}\label{Def of KF poly}
We can deduce similar results for $L^+G$-orbits from the above.
We resume the discussion in \S\ref{sss:G(O) C coeff}.
For $L^+G$-equivariant local systems 
$\cL_{\chi,\nu}$ and $\cL_{\chi',\nu'}$,
define $P_{\chi',\nu';\chi,\nu}$ similarly as $P_{\eta,u;\xi,v}$.
By the same argument as the Iwahori case and \cite[Theorem 86]{CYUntwisted}, we obtain:
\begin{thm}\label{t:G(O) Poincare poly uniqueness}
	For any pairs $(\chi',\nu'),(\chi,\nu)$, 
	$P_{\chi',\nu';\chi,\nu}$ is a polynomial in $q$ 
	with non-negative integer coefficients.
	It is the unique family of polynomials in $q^{\frac{1}{2}}$ 
	satisfying the following conditions:
	\begin{itemize}
		\item [(i)] $P_{\chi,\nu;\chi,\nu}=1$.
		\item [(ii)] If $\nu'\neq\nu$, 
		$\deg_q(P_{\chi',\nu';\chi,\nu})\leq\frac{1}{2}(\delta(\nu)-\delta(\nu')-1)$.
		\item [(iii)] $C_{\chi,\nu}:=\sum_{\chi',\nu'}P_{\chi',\nu';\chi,\nu}[\cL_{\chi',\nu'}]$
		satisfies $D_\delta C_{\chi,\nu}=q^{-\delta(\nu)}C_{-\chi,\nu}$.
	\end{itemize}
\end{thm}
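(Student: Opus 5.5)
We follow the Iwahori case (Theorem \ref{t:I_0 Poincare poly uniqueness}), transporting everything along the relation \eqref{eq:c v.s. C} and the identification \eqref{IC=IC}.

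\emph{Polynomiality and positivity.} By definition, $P_{\chi',\nu';\chi,\nu}=\sum_i [\cL_{\chi',\nu'},\sH^i(\IC_{\chi,\nu})|_{L^\theta X_{\nu'}}]q^{i/2}$, suitably normalized by the shift $\delta(\nu)$. We pick any Iwahori orbit $\cO_u\subset L^\theta X_{\nu'}$ and a pair $(\eta,u)\subset(\chi',\nu')$. By \eqref{eq:c v.s. C} we have $C_{\chi',\nu';\chi,\nu,i}=c_{\eta,u;\xi_\chi,v_\nu,i}$. Hence $P_{\chi',\nu';\chi,\nu}=P_{\eta,u;\xi_\chi,v_\nu}$, and Theorem \ref{t:I_0 Poincare poly uniqueness} shows it is a polynomial in $q$ with coefficients in $\bN$.

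\emph{Properties (i)--(iii).} Property (i) is immediate from \eqref{eq:C coeff}. For (ii), we use Theorem \ref{purity}: $\IC_{\chi,\nu}$ is $*$-pointwise pure of weight $\delta(\nu)$. We also use Theorem \ref{t:IC parity vanishing}(iii), and the fact that the restriction to $L^\theta X_{\nu'}$ lies in perverse degrees $<0$ relative to $\delta(\nu')$, since it is the IC extension. This gives $\sH^i$ nonzero only for $-\delta(\nu)\le i<-\delta(\nu')$. Combined with parity, this yields $\deg_q\le\frac12(\delta(\nu)-\delta(\nu')-1)$. Alternatively, (ii) follows from the Iwahori degree bound applied to $u=v_{\nu'}$, since $\delta(v_{\nu'})=\delta(\nu')$. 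For (iii), Verdier duality gives $\mathbb D_\delta\IC_{\chi,\nu}\cong\IC_{-\chi,\nu}(\delta(\nu))$. Passing to the Grothendieck group of mixed $L^+G$-sheaves, and using pointwise purity to identify $h[\IC_{\chi,\nu}]$ with $C_{\chi,\nu}$ up to sign conventions, we obtain $D_\delta C_{\chi,\nu}=q^{-\delta(\nu)}C_{-\chi,\nu}$. Here $D_\delta$ on the $L^+G$-module is compatible with $h$ because the forgetful map commutes with Verdier duality.

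\emph{Uniqueness.} This is the standard triangularity argument. Suppose two families satisfy (i)--(iii), and consider the difference $C_{\chi,\nu}-C'_{\chi,\nu}$. Induct on $\nu$ using the filtered order of Lemma \ref{filtered}, which makes the induction over closures finite. Write the difference as $\sum_{\nu'<\nu}a_{\nu'}\,C'_{\chi',\nu'}$. Applying $D_\delta$ and comparing the coefficients at a maximal $\nu'$ gives the relation $\bar a_{\nu'}=q^{\delta(\nu)-\delta(\nu')}a_{\nu'}$, up to the normalization. This relation is incompatible with the degree bound (ii) unless $a_{\nu'}=0$. The main obstacle is bookkeeping: one must check that $D_\delta$ on the span of the $[\cL_{\chi',\nu'}]$ is triangular with leading term $q^{-\delta(\nu')}[\cL_{-\chi',\nu'}]$. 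This holds by the spherical analogue of \eqref{eq:b coeff}, obtained by pushing through $h$ and using that $L^+G$-orbit closures are unions of Iwahori orbits.
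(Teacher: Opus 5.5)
Your proposal is correct and follows essentially the paper's route via \cite[Theorem 86]{CYUntwisted}: positivity and the degree bound are transported from the Iwahori case through \eqref{eq:c v.s. C} and \eqref{IC=IC}, (iii) comes from Verdier self-duality of $\IC_{\chi,\nu}$ together with pointwise purity and parity vanishing, and uniqueness is the standard Kazhdan--Lusztig triangularity argument. Two bookkeeping fixes are needed. First, the finiteness you need comes from each closure $\overline{L^\theta X}_\nu$ containing only finitely many $L^+G$-orbits; Lemma \ref{filtered} is about common upper bounds and plays no role. Second, since $D_\delta$ exchanges $\chi$ and $-\chi$, you should expand both differences $C_{\pm\chi,\nu}-C'_{\pm\chi,\nu}=\sum a^{\pm}_{\chi',\nu'}C'_{\chi',\nu'}$; this gives $a^{-}_{-\chi',\nu'}=q^{\delta(\nu)-\delta(\nu')}\,\bar a^{+}_{\chi',\nu'}$ (not $\bar a=q^{\delta(\nu)-\delta(\nu')}a$), and at a maximal $\nu'$ of the support this is incompatible with (ii) unless both coefficients vanish.
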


\section{Applications}\label{applications}

\subsection{Formality}
Let $D^{}(L^+G\backslash\Gr_G)$ be the derived Satake category for 
$G$ and 
let $D^{}(L^+G\backslash L^\theta X)$ be the 
derived relative Satake category of 
dg derived category of 
$L^+G$-equivariant complexes on $LX$.
We have the Hecke action, denoted by $\star$, of the  Satake category $D^{}(L^+G\backslash LG/L^+G)$ 
on $D^{}(L^+G\backslash L^\theta X)$.
We have the monoidal abelian Satake equivalence 
\[\on{Rep}(\check G)\cong\on{Perv}^{}(L^+G\backslash\Gr_G):\ \ \ V\to \ \IC_V.\]
By restricting the action to $\on{Perv}^{}(L^+G\backslash\Gr_G)\cong\on{Rep}(\check G)$, we obtain 
a monoidal action of $\on{Rep}(\check G)$ on $D^{}(L^+G\backslash L^\theta X)$.
Let $\mathrm{e}\in D^{}(L^+G\backslash L^\theta X)$ be the constant sheaf on the integral points
$L^\theta X\cap L^+G$, which is a finite union of closed $L^+G$-orbits, see \cite[Corollary 23]{CYMatsuki}.
Let $\IC_{\cO(\check G)}$
(an ind-object in $\on{Perv}^{}(L^+G\backslash\Gr_G)$)
be the image of the regular representation 
$\cO(\check G)$
under the abelian Satake equivalence.
Since $\cO(\check G)$ is a ring object in $\on{Rep}(\check G)$, the 
RHom space 
\begin{equation}\label{def of A_X}
A_{X,\theta}:=R\Hom_{D(L^+G\backslash L^\theta X)}(\mathrm{e}_{},\IC_{\cO(\check G)}\star\mathrm{e}_{})
\end{equation}
is naturally a dg-algebra, known as the de-equivariantized Ext algebra.

\begin{thm}\label{conj}
	The dg-algebra $A_{X,\theta}$
	is formal, that is, it is quasi-isomorphic to the 
	graded algebra 
	$A_{X,\theta}\cong\on{Ext}^\bullet_{D(L^+G\backslash L^\theta X)}(\mathrm{e},\IC_{\cO(\check G)}\star\mathrm{e}_{})$
	with trivial differential.
\end{thm}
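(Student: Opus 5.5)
The plan is to follow the standard weight argument for formality, as in the untwisted case \cite{CYUntwisted}: show that the dg-algebra $A_{X,\theta}$ admits a mixed (Weil) structure in which $\on{Ext}^i$ is pure of weight $i$. A Deligne-type formality criterion then gives a quasi-isomorphism to the cohomology with trivial differential.

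First I would reduce to computations with $\IC$-complexes. The object $\mathrm{e}$ is a finite direct sum of constant sheaves on the closed $L^+G$-orbits in $L^\theta X\cap L^+G$, so $\mathrm{e}$ is a sum of $\IC_{\chi,\nu}$ (up to shift) for closed $\nu$. The convolution $\IC_V\star\mathrm{e}$ is computed by the Hecke action map $a:\overline{LG_\lambda}\times^{L^+G}\overline{L^\theta X_\nu}\to L^\theta X$, whose source is a proper stratified space. By the decomposition theorem in the placid setting (Theorem \ref{Placidness of orbits}, Proposition \ref{p:dim theory}), $\IC_V\star\mathrm{e}$ is a direct sum of shifted $\IC_{\chi',\nu'}$. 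It is moreover pure of weight zero for the canonical Weil structures, since $\IC_V$ is pure and $a$ is proper.

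Next I would show that each $R\Hom(\IC_{\chi,\nu},\IC_{\chi',\nu'})$, with $\nu$ closed, has pure Frobenius weights equal to cohomological degree. Filter by orbits, or equivalently restrict along the closed orbit $\cO_\nu$. Then $\Hom^\bullet$ is computed by $L^+G$-equivariant cohomology of the stalks $\sH^i(\IC_{\chi',\nu'})|_{\cO_\nu}$ twisted by local systems. Theorem \ref{purity} gives $*$-pointwise purity of $\IC_{\chi',\nu'}$. The equivariant cohomology of a homogeneous orbit $L^+G/\Stab$ is pure, reducing to $H^*_{\Stab^{\mathrm{red}}}(\mathrm{pt})$, whose weight equals degree since $T_0$ is split. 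Combining these gives purity of $\on{Ext}^\bullet(\mathrm{e},\IC_V\star\mathrm{e})$. Parity vanishing (Theorem \ref{t:IC parity vanishing}(iii)) is not strictly needed here, but it ensures the $*$-stalk spectral sequence degenerates, so the weight statement holds on the nose.

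Finally, passing to the ind-object $\IC_{\cO(\check G)}=\bigoplus_V V^*\otimes\IC_V$ is compatible with Weil structures and colimits. The multiplication on $A_{X,\theta}$ comes from the ring structure of $\cO(\check G)$ and the monoidal Satake action, both of which are Frobenius-equivariant. To promote pointwise purity to formality of the algebra rather than just the complex, I would realize $A_{X,\theta}$ through a mixed dg-model, for example via the mixed/$\mathbb F_q$-graded version of $D(L^+G\backslash L^\theta X)$ as in \cite{CYUntwisted}. In that model the weight grading splits $A_{X,\theta}$ into pieces, and purity forces the differential to vanish up to quasi-isomorphism.

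I expect the main obstacle to be this last step: setting up a genuinely multiplicative weight structure on the dg-algebra in the ind-placid, infinite-type setting. I would handle it by truncating to finitely many $V$ and finite unions of orbit closures, and then applying the argument from \cite{CYUntwisted} verbatim.
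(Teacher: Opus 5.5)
Your proposal follows the paper's route. The paper reruns the untwisted proof of \cite[Theorem 93]{CYUntwisted} with Theorem~\ref{purity}: $\IC_{\cO(\check G)}\star\mathrm{e}$ is pure of weight zero and splits into shifted, Tate-twisted $\IC_{\chi,\nu}$'s; pointwise purity makes $\on{Ext}^i(\mathrm{e},-)$ of each summand pure of weight $i$; and the Bezrukavnikov--Finkelberg weight-splitting argument, which is your last step, then gives formality.

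One step needs correcting. Since $\mathrm{e}=j_!\bQl$ for the closed embedding $j$ of $L^\theta X\cap L^+G$, one has $R\Hom(\mathrm{e},\cF)\cong R\Gamma_{L^+G}(L^\theta X\cap L^+G,j^!\cF)$. So the input is the costalks $j^!\IC_{\chi',\nu'}$ and $!$-pointwise purity, not the $*$-stalks you cite. This is also covered by Theorem~\ref{purity}; equivalently, it follows from $*$-purity because $\mathbb D_\delta$ swaps $j^*$ and $j^!$ and sends IC-complexes to IC-complexes up to Tate twist.
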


\begin{proof}
	The same proof as in the untwisted case \cite[Theorem 93]{CYUntwisted} using the pointwise purity in Theorem \ref{purity}.
 \end{proof}

\subsection{Semisimplicity}
In the untwisted case, the parity vanishing of $\IC$-complexes on spherical orbit closures yields a semisimplicity criterion for the Satake category; see \cite[Theorem 87(i)]{CYUntwisted}. In the present twisted setting, the symplectic structures on transversal slices to spherical orbits yield the following stronger result:

\begin{thm}\label{p:semisimple}
	The relative Satake category
	$\mathrm{Perv}_\delta(L^+G\backslash L^\theta X)$ is semisimple.
\end{thm}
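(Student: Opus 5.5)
We will prove semisimplicity by showing that there are no nontrivial extensions between simple objects, in the standard way used for the Satake category of $\Gr_G$. The simple objects of $\mathrm{Perv}_\delta(L^+G\backslash L^\theta X)$ are the $\IC_{\chi,\nu}$. Every object has a finite filtration by these, so it suffices to show
\[\on{Ext}^1(\IC_{\chi,\nu},\IC_{\chi',\nu'})=0\]
for all pairs $(\chi,\nu)$ and $(\chi',\nu')$. Any extension lives on a finite-type quotient of a finite union of orbit closures, by the placid presentation of Proposition \ref{equ dim placid}. So the usual dévissage over the stratification by $L^+G$-orbits applies. It reduces the vanishing to three statements.

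First, there is no extension within a single orbit. We need $\on{Ext}^1_{L^+G}(\cL_{\chi,\nu},\cL_{\chi',\nu})=0$ on $L^\theta X_\nu$. This equals $H^1$ of the equivariant cohomology of the classifying space of the stabilizer, with coefficients in a representation of the finite component group. It vanishes because that equivariant cohomology is concentrated in even degrees: the stabilizer is pro-unipotent-by-reductive, the reductive quotient is a symmetric subgroup of a Levi, and we are in characteristic zero coefficients where $H^*(BH^\circ)$ is even. This handles $\nu=\nu'$.

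Second, for $\nu\neq\nu'$, say $\nu'<\nu$ (or incomparable, which is trivial), the dévissage reduces the claim to a parity statement. It suffices that the stalks $\sH^i\IC_{\chi,\nu}|_{L^\theta X_{\nu'}}$ and costalks vanish unless $i+\delta(\nu)$ is even, and that the shift $\delta(\nu)-\delta(\nu')$ is even. Then $\on{Hom}(i^*\IC_{\chi,\nu},\IC_{\chi',\nu'}[1])$ is built from terms $H^{odd}_{\mathrm{equiv}}$ of orbits and so vanishes, and the same holds with $i^!$ by Verdier duality, which preserves the parity statement by Theorem \ref{purity}. The stalk parity is Theorem \ref{t:IC parity vanishing}(iii).

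Third, and this is the main obstacle, we need the parity of relative dimensions. This is where the symplectic slices enter. By Corollary \ref{c:S^x_y equi-singular}(i), $\overline{\cO}_y$ near $x$ is smoothly equivalent to $S^x_{\leq y}$, so $\delta(\nu)-\delta(\nu')=\dim S^x_y$. By Lemma \ref{l:orbit slice}(ii), $S^x_y$ is open in $L^x_y$, which is symplectic by Proposition \ref{p:symplectic slice}(i). Hence $\dim S^x_y$ is even. We must check that $\delta$ computes this dimension on each connected component. This uses the compatibility \eqref{dim_S/S'} of the dimension theory with the formally smooth map $L^+G\times S^x_{\leq y}\to\overline{\cO}_y$, whose relative dimension equals the codimension of $\cO_x$. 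This needs a little care with placid dimensions but is formal.

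Consequently, any two orbits in the closure of one another differ in $\delta$ by an even number. Parity vanishing then says every $\IC_{\chi,\nu}$ has stalks and costalks only in degrees of a fixed parity, uniform along a connected component. Combined with the evenness of equivariant cohomology of the orbits, all $\on{Ext}^1$ between simples vanish, and $\mathrm{Perv}_\delta(L^+G\backslash L^\theta X)$ is semisimple.
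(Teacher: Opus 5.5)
Your proposal is correct and follows the paper's proof. The paper argues that the symplectic slices (Lemma~\ref{l:orbit slice} and Proposition~\ref{p:symplectic slice}) force even codimension of spherical orbits, combines this with the stalk parity of Theorem~\ref{t:IC parity vanishing}, and then invokes the standard $\on{Ext}^1$-vanishing argument of Zhu's Proposition~5.1.1, which you have written out in full; the one minor correction is that the costalk parity comes from applying the stalk parity to the dual local system via $\mathbb{D}_\delta\IC_{\chi,\nu}\cong\IC_{-\chi,\nu}(\delta(\nu))$, not from Theorem~\ref{purity}.
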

\begin{proof}
The symplecticity of the slices implies that spherical orbits have even codimension. Combining this with the parity vanishing in Theorem \ref{t:IC parity vanishing}, we obtain the claim by the general argument of \cite[Proposition 5.1.1]{ZhuIntroGr}; see also \cite[Theorem 87(i)]{CYUntwisted}.
\end{proof}

\subsection{Geometric Satake equivalence}
We recall some constructions and results in \cite[Section 6 and Section 7]{CYTempiric}.
We fix a pinned involution $\sigma_0$ of $G$
in the inner class of $\theta_0$
and consider the dual pinned involution $\check\sigma=-w_0\circ\check\sigma_0$
for the complex dual group $\check G$.
Let $I_{2\check\rho(-1)}$ be the equivalence classes of pure involutions 
of $\check G$ in the inner class of $\check\sigma$ as in \cite[Definition 14]{CYTempiric}.
We fix a set of representatives $\check\theta_i,i\in I_{2\check\rho(-1)}$
and let $\check K_i=\check G^{\check\theta_i}$ be the 
associated symmetric subgroups.
Let $\check G^{alg}\to\check G$ be the pro-algebraic group of finite coverings of $\check G$.
We have an exact sequence
\[1\ra\pi_1(\check G)^{alg}\to\check G^{alg}\to\check G\to 1\]
where $\pi_1(\check G)^{alg}$ is the projective limit of finite quotients of the fundamental group $\pi_1(\check G)$. Denote by $\check K^{alg}_i=\check G^{alg}\times_{\check G}\check K_i$ the base change of $\check K_i$.

Let $Z(G)$ be the center of $G$ 
and $Z(G)^{\sigma_0}_{tor}\subset Z(G)$ the subgroup of 
$\sigma_0$-fixed torsion elements. 
For any $z\in Z(G)^{\sigma_0}_{tor} $, we can form the 
extended loop symmetric space 
\[L^\sigma_zX=\{\gamma\in LG|\gamma\sigma(\gamma)=z^{-1}\}\]
with $LG$-action given by the $\sigma$-conjugation.
On the other hand,
via the duality \[\on{Hom}_{cont}(\pi_1(\check G)^{alg},\bC^\times)\cong Z(G)_{tor}\]
any such $z$ gives rise to a 
continuous character
$\chi_z:\pi_1(\check G)^{alg}\to\mathbb C^\times$,
and we set $\on{Rep}(\check K_i^{alg})_z$ to be the category 
of finite dimensional complex representation of $\check K_i$ such that the central subgroup $\pi_1(\check G)^{alg}$ acts by $\chi_a$. Let $\on{Irr}(\check K_i^{alg})_z$ be the set of irreducible representations in $\on{Rep}(\check K_i^{alg})_z$.
Note that if $z=1$ is trivial, for example when $\theta_0=\sigma_0$ is a pinned involution, then
$\chi_z$ is the trivial character and 
we have 
$\on{Rep}(\check K_i^{alg})_z=\on{Rep}(\check K_i)$.
Using the Matsuki duality for loop groups in \cite{CYMatsuki} and 
Vogan's theory of minimal $K$-types, it is shown in \cite[Theorem 19]{CYTempiric} that there is a natural bijection 
\[
\on{Loc}_{L^+G}(L^\sigma_z X)\longleftrightarrow\bigsqcup_{i\in I_{2\check\rho(-1)}}\on{Irr}(\check K^{alg}_i)_z,
\]
where the left hand side is the set of $L^+G$-equivariant irreducible local systems on spherical orbits on the $L^\sigma_z X$.
Note that $\theta_0$ is of the form $\theta_0=\Ad_a\circ\sigma_0$
for some $a\in G$, satisfying $a\sigma_0(a):=z^{-1}\in Z(G)_{tor}^{\sigma_0}$, and 
there is an $LG$-equivariant isomorphism
\[L^\theta X\cong L^\sigma_zX, \ \gamma\to \gamma a.\]
Thus we conclude that there is a natural bijection
\begin{equation}\label{bijection}
    \on{Loc}_{L^+G}(L^\theta X)\longleftrightarrow\bigsqcup_{i\in I_{2\check\rho(-1)}}\on{Irr}(\check K^{alg}_i)_z.
\end{equation}

Since the set of irreducible objects in
$\mathrm{Perv}_\delta(L^+G\backslash L^\theta X)$
are in bijection with $\on{Loc}_{L^+G}(L^\theta X)$
and the category $\on{Rep}(\check K^{alg}_i)_z$
is semisimple,  Theorem \ref{p:semisimple} implies:

\begin{thm}\label{Satake}
There is an equivalence of abelian categories
\[\mathrm{Perv}_\delta(L^+G\backslash L^\theta X)\cong\bigoplus_{i\in I_{2\check\rho(-1)}}\on{Rep}(\check K^{alg}_i)_z\]
such that the induced map between irreducible objects is given by the bijection in~\eqref{bijection}.
\end{thm}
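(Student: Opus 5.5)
The plan is to deduce the equivalence formally from semisimplicity on both sides together with the bijection \eqref{bijection}. By Theorem \ref{p:semisimple}, $\mathrm{Perv}_\delta(L^+G\backslash L^\theta X)$ is semisimple. Its simple objects are the $\IC_{\chi,\nu}$, indexed by $\on{Loc}_{L^+G}(L^\theta X)$. The first step is to record that every object is a finite direct sum of simples. This holds because each perverse sheaf is supported on a finite union of orbit closures $\overline{L^\theta X}_{\underline\nu}$, which contain only finitely many $L^+G$-orbits, and each orbit carries finitely many irreducible equivariant local systems, since $\pi_0$ of the stabilizer is finite. It follows that $\mathrm{Perv}_\delta(L^+G\backslash L^\theta X)$ is equivalent to the category of finitely supported families of finite-dimensional vector spaces indexed by its simples. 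Concretely, $\cF\mapsto(\Hom(\IC_{\chi,\nu},\cF))$ gives the equivalence, using $\End(\IC_{\chi,\nu})=\bQl$ by Schur's lemma over an algebraically closed field.

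The second step treats the representation side in the same way. The group $\check K^{alg}_i$ is pro-reductive: $\check K_i$ is reductive (possibly disconnected), and the extension by the profinite group $\pi_1(\check G)^{alg}$ does not affect this. Hence $\on{Rep}(\check K^{alg}_i)_z$ is semisimple with simple objects $\on{Irr}(\check K^{alg}_i)_z$, each having endomorphism ring $\bC$. Thus $\bigoplus_i\on{Rep}(\check K^{alg}_i)_z$ is equivalent to the category of finitely supported families of vector spaces indexed by $\bigsqcup_i\on{Irr}(\check K^{alg}_i)_z$.

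Finally, I would fix an isomorphism $\bQl\cong\bC$ and transport along the bijection \eqref{bijection}. This yields an additive equivalence that sends $\IC_{\chi,\nu}$ to the irreducible representation matched with $\cL_{\chi,\nu}$, as claimed.

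The only real content is Theorem \ref{p:semisimple}. What remains to check, and what I expect to be the mild obstacle, is that the simple objects of $\mathrm{Perv}_\delta$ really are exactly the $\IC_{\chi,\nu}$ for \emph{irreducible} equivariant local systems, matching the indexing set of \eqref{bijection}. Concretely, one must confirm that the parametrization of irreducible perverse sheaves on the placid ind-scheme, via the placid presentation of Proposition \ref{equ dim placid}, agrees with the finite-type one. I would also note that the equivalence is only of abelian categories, so no monoidal or Hecke compatibility needs to be checked.
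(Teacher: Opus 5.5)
Your proposal is correct and matches the paper's argument. The paper likewise deduces the equivalence directly from three facts: the semisimplicity of $\mathrm{Perv}_\delta(L^+G\backslash L^\theta X)$ (Theorem \ref{p:semisimple}), the semisimplicity of $\on{Rep}(\check K^{alg}_i)_z$, and the matching of simple objects on the two sides via the bijection \eqref{bijection}. Your extra details (finiteness of supports and of local systems, Schur's lemma, pro-reductivity of $\check K^{alg}_i$, and fixing $\bQl\cong\bC$) only spell out what the paper leaves implicit.
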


\begin{exam}
Consider the case $G=\SL_2$, 
$\check G=\PGL_2$,
$\theta_0=\sigma_0=\on{id}$, $z=1$.
We have $L^\theta X(k)\cong\SL_2(F)/\SL_2(F')$
where $F'=k((t^2))$.
According to \cite[\S9]{CYTempiric},
there are two closed orbits, $\cO_0$ and $\cO_1$, each supporting only the trivial local system $\cL_{triv}$. The non-closed orbits are $\cO_r$, $r\geq 2$, and each $\cO_r$ supports two local systems, the trivial local system 
$\cL_{triv}$
and the sign local system $\cL_{sign}$.
We have $I_{2\check\rho(-1)}=\{1,2\}$ with 
$\check K_1=PO_2\cong\mathbb G_m\rtimes\{\pm1\}$ and $\check K_2=PGL_2$.
Let $W_r,r\geq0$ be the set of 
 irreducible representations of 
$PO_2$, where $W_0$ and $W_1$ are the trivial and sign representations and 
$W_r$ are the two dimensional irreducible representations such that $W_r|_{\mathbb G_m}=\chi_{r-1}\oplus\chi_{r-1}^{-1}$ ($\chi_{r-1}:\mathbb G_m\to\mathbb G_m, t\to t^{r-1}$). 
Let $V_{j}$ be the irreducible representation of $PGL_2$ of dimension $2j+1$.
Then Theorem \ref{Satake} takes the form
\[\mathrm{Perv}_\delta(L^+\SL_2\backslash L^\theta X)\cong \on{Rep}(PO_2)\oplus\on{Rep}(PGL_2)\]
\[\IC(\cO_r,\cL_{triv})\longleftrightarrow W_r,\ \ \ r\geq 0\]
\[\IC(\cO_r,\cL_{sign})\longleftrightarrow V_{r-2},\ \ \  r\geq 2.\]
\end{exam}

\quash{
\section{Applications}\label{Formality}
We discuss applications of our main results to 
relative Langlands duality \cite{BZSV}.

\subsection{Semisimplicity
criterion}\label{Semi cri}
In the group case, an important application of the 
parity vanishing of $\IC$-complexes is the 
semisimplicity of the Satake category
$\on{Perv}(L^+G\backslash\Gr)$.
Using the parity vanishing in Theorem \ref{t:IC parity vanishing} and \cite{CN3}
,  we prove the following 
semisimplicity
criterion and Langlands dual description
for the relative Satake category 
$\on{Perv}_{}(L^+G\backslash LX)$:

\begin{thm}\label{semi}
   Assume the codimensions of $L^+G$-orbits in the same connected component of $LX$ are even.
\begin{itemize}
    \item [(i)] The relative Satake category  $\on{Perv}_{\delta}(L^+G\backslash LX)$ is semisimple.
\item [(ii)] Assume further that the the 
$L^+G$-stablizers on $LX$ are connected.
Then there is an equivalence of abelian categories
\[\on{Perv}_{\delta}(L^+G\backslash LX)\cong\on{Rep}(\check G_X)\]
where the $\on{Rep}(\check G_X)$
is the category of finite dimensional
complex representations of the dual group 
$\check G_X$ of X \cite{GN}.
\end{itemize}

\end{thm}
\begin{proof}
(i) The standard proof in the group case,  
   see, e.g., \cite[Proposition 5.1.1]{ZhuIntroGr},
   only uses the parity vanishing of $\IC$-complexes 
   and parity of the dimension of spherical orbits in each connected component, thus is applicable in our setting.

(ii)
The assumption implies 
$\on{Perv}_\delta(L^+G\backslash LX)$
is a semisimple abelian category 
whose irreducible objects are 
$\IC$-complexes with trivial local systems.
\cite[Corollary 13.8.]{CN3} implies that 
$\on{Perv}(L^+G\backslash LX)$
contains a full subcategory 
$\on{Rep}(\check G_X)$
consisting of $\IC$-complexes 
with trivial coefficients
supported on the orbit closures
$\overline{LX}_\lambda$ with 
$\lambda\in\Lambda_S^+$
of the form
$-\theta(\mu)+\mu$, $\mu\in\Lambda_T$.
Now the desired claim follows from the Lemma \ref{support} below.
\end{proof}

\begin{exam}\label{new examples}
Assume $LX$ is connected (for example, when $G$ is simply connected).
According to the codimension formula in Proposition \ref{codim formula},
the evenness assumption in the theorem is satisfied if and only if 
for $LX_\lambda\subset\overline{LX}_\mu$
we have 
$\on{codim}_{\overline{LX_\lambda}}(\overline{LX}_\mu)=\delta(\mu)-\delta(\lambda)=\langle\rho,\mu-\lambda\rangle\in 2\mathbb Z$. 
Using this formula,
one can check that when
$X=\SL_{2n}/\Sp_{2n},\mathrm{Spin}_{2n}/\on{Spin}_{2n-1},\mathrm{E}_6/\mathrm{F}_4$, and 
$\SL_{n}/\on{S}(\GL_{p}\times\GL_q)$ with $n=p+q$ odd, with
relative dual groups $\check G_X=\PGL_{2n},\PGL_2$, $\PGL_3$ (the splitting rank cases)
and $\on{PSp}_{2q}$, $p\geq q$ (the case $\SL_{n}/\on{S}(\GL_{p}\times\GL_q$)), both the evenness and connectedness assumptions
are satisfied and 
we obtain the following new instances of 
abelian relative Satake equivalence:
\begin{enumerate}
    \item $\on{Perv}(L^+\SL_{n}\backslash L(\SL_{n}/\Sp_{n}))\cong\on{Rep}(\PGL_n)$,
    \item $\on{Perv}(L^+\mathrm{Spin}_{2n}\backslash L(\mathrm{Spin}_{2n}/\on{Spin}_{2n-1}))\cong\on{Rep}(\PGL_2)$,
    \item $\on{Perv}(L^+\mathrm{E}_6\backslash L(\mathrm{E}_6/\mathrm{F}_4))\cong\on{Rep}(\PGL_3)$,
    \item $\on{Perv}(L^+\SL_{n}\backslash L(\SL_{n}/\on{S}(\GL_p\times\GL_q)))\cong\on{Rep}(\on{PSp}_{2q})$.
\end{enumerate}

\end{exam}

\begin{rem}\label{SO_2}
    The assumption in the theorem is necessary.
    For example, consider the complex symmetric variety 
    $X=\SL_2/\SO_2$. 
    Using the codimension formula above one can
    check that there are orbits with codimension equal to one. 
   On the other hand, from \cite[theorem 1.8]{BAF}, the category 
    $\on{Perv}_\delta(L^+\SL_2\backslash LX)$
    has a full subcategory equivalent to the 
    abelian category 
    $\wedge^\bullet T^*\mathbb C^2\on{-mod}^{\SL_2}$
    of finite dimensional $\SL_2$-equivariant modules
    over the exterior algebra $\wedge^\bullet T^*\mathbb C^2$. In particular, $\on{Perv}(L^+\SL_2\backslash LX)$ is not semisimple.
Another way to see this is to use the real-symmetric equivalence 
\[\on{Perv}_\delta(L^+\SL_2\backslash LX)\cong\on{Perv}(L^+\SL_2(\mathbb R)\backslash\Gr_{\SL_2(\mathbb R)})\]
in \cite{CN3}, where the right hand side is the real Satake category for $\SL_2(\mathbb R)$.
There is a real spherical orbit closure in 
$\Gr_{\SL_2(\mathbb R)}$ homeomrphic to
the real two dimensional pinched torus
and the 
extension by zero $j_!(\mathbb C[1])$
along the open orbit
provides 
a non-semisimple object
in $\on{Perv}(L^+\SL_2(\mathbb R)\backslash\Gr_{\SL_2(\mathbb R)})$.

\end{rem}

\begin{lem}\label{support}
    Assume the codimensions of $L^+G$-orbits in the same connected component of $LX$ are even.
    Then every element $\Lambda_S$
    is of the form
    $-\theta(\mu)+\mu$ for some $\mu\in\Lambda_T$.
\end{lem}
\begin{proof}
    From diagram~\eqref{component}, we can write 
    $\lambda=(-\theta(\mu_1)+\mu_1)+\lambda'$
    where $\lambda'\in\Lambda_S\cap Q$ (recall we assume $K$ is connected).
    Let $\Delta_L$ be the set of  simple coroots of the Levi $L\subset G$ that centralizes $2\check\rho_M$.
    According to \cite[Lemma 2.1.1]{NadlerRealGr}, we have $-\theta(\Delta_L)=\Delta_L$
    and $\lambda'=\sum_{\alpha\in\Delta_L} n_\alpha\alpha$. 
    Note that we have 
    $-\theta(\alpha)\neq\alpha$ for any $\alpha\in\Delta_L$, 
    otherwise 
    $\alpha\in\Lambda_S^+\cap Q$
    and Proposition \ref{codim formula}  would imply 
    \[\on{codim}_{\overline{LX_\alpha}}(\overline{LX}_0)=\langle\rho,\alpha\rangle=1\]
    contracditing the evenness assumption (note that $LX_\alpha$ and $LX_0$ are in the same component).
    Thus
     we can write 
    $\lambda'=\sum n_{\alpha}(-\theta(\alpha)+\alpha)$
    where $\alpha$ runs through representatives of $(-\theta)$-orbits on $\Delta_L$. The lemma follows.
\end{proof}
\begin{rem}
    The converse of Lemma \ref{support} is not true.
    For example when $X=\GL_{n}/\GL_p\times\GL_q$
    with $n=p+q$ even, every element $\Lambda_S^+$ is of the form
    $-\theta(\mu)+\mu$ but there are orbits with 
    odd codimension $n-1$.
\end{rem}

\subsection{Formality of dg extension algebras}\label{Ext algebras}
Let $D^{}(L^+G\backslash\Gr)$ be the derived Satake category for 
$G$ and 
let $D^{}(L^+G\backslash LX)$ be the 
derived relative Satake category of 
dg derived category of 
$L^+G$-equivariant complexes on $LX$.
Here we use the $*$-sheaves theory,
see, Appendix \ref{s:appendix}.
We have the Hecke action, denoted by $\star$, of the  Satake category  $D^{}(L^+G\backslash LG/L^+G)$ on $D^{}(L^+G\backslash LX)$ defined similary as in Section \ref{category of sheaves} (replacing $I_0$ by $L^+G$).
We have the monoidal abelian Satake equivalence 
\[\on{Rep}(\check G)\cong\on{Perv}^{}(L^+G\backslash\Gr_G):\ \ \ V\to \ \IC_V.\]
By restricting the action to $\on{Perv}^{}(L^+G\backslash\Gr_G)\cong\on{Rep}(\check G)$, we obtain 
a monoidal action of $\on{Rep}(\check G)$ on $D^{}(L^+G\backslash LX)$.
Let $\mathrm{e}_{L^+X}\in D^{}(L^+G\backslash LX)$ be the constant sheaf on 
the closed $L^+G$-orbit
$LX_0=L^+X$.
Let $\IC_{\cO(\check G)}$
(an ind-object in $\on{Perv}^{}(L^+G\backslash\Gr_G)$)
be the image of the regular representation 
$\cO(\check G)$
under the abelian Satake equivalence.
Since $\cO(\check G)$ is a ring object in $\on{Rep}(\check G)$, the 
RHom space 
\begin{equation}\label{def of A_X}
A_X:=R\Hom_{D(L^+G\backslash LX)}(\mathrm{e}_{L^+X},\IC_{\cO(\check G)}\star\mathrm{e}_{L^+X})
\end{equation}
is naturally a dg-algebra, known as the de-equivariantized Ext algebra.
The formality conjecture in relative Langlands duality is the following assertion 
for $X$ a spherical variety(see, e.g., \cite[Conjecture 8.1.8]{BZSV}):
\begin{conj}\label{conj}
	The dg-algebra $A_X$
	is formal, that is, it is quasi-isomorphic to the 
	graded algebra 
	$A_X\cong\on{Ext}^\bullet_{D(L^+G\backslash LX)}(\mathrm{e}_{L^+X},\IC_{\cO(\check G)}\star\mathrm{e}_{L^+X})$
	with trivial differential.
\end{conj}

\begin{thm}\label{t:formality}
	For $X$ a symmetric variety, Conjecture \ref{conj} holds true.
\end{thm}
\begin{proof}
	The same proof as in Proposition \ref{decomposition theory} 
        shows that 
	$\IC_{\cO(\check G)}\star\mathrm{e}_{L^+X}$
        is a pure complex of weight zero isomorphic to
	a direct sum of  $\IC$-complexes $\IC_{\chi,\lambda} 
        (\frac{\delta(\lambda)}{2})$ shifted by $[2n](n)$.
	By Theorem \ref{purity}, the $\IC$-complexes  $\IC_{\chi,\lambda} 
        (\frac{\delta(\lambda)}{2})$ are pointwise pure of weight zero 
	and it follows that the
	Ext group 
	$\on{Ext}^i_{D(L^+G\backslash LX)}
	(\mathrm{e}_{L^+X},\IC_{\chi,\lambda}(\frac{\delta(\lambda)} 
        {2}))\cong 
        H^i_{LG^+}(L^+X,j^!\IC_{\chi,\lambda}(\frac{\delta(\lambda)}{2}))$ is pure of weight $i$ (here $j:L^+X\to LX$ is the inclusion).
        Now the desired formality of $A_X$ 
        follows from the result of \cite[Section 6.5]{BF}.		
\end{proof}
}

\end{document}